\documentclass[a4wider]{amsart}
\usepackage{enumerate}
\usepackage{fontenc}
\usepackage{color}
\usepackage{amsfonts,amsmath,amssymb,amsthm}
\usepackage{dsfont}
\usepackage{latexsym}
\usepackage{enumerate}
\usepackage[T1]{fontenc}
\usepackage{yfonts}
\frenchspacing
\usepackage{amsmath,amsfonts,amstext}
\usepackage{amssymb,amsmath,amscd,graphicx,epsfig,hhline,mathrsfs,yhmath,latexsym,url}
\usepackage[arrow, matrix, curve]{xy}
\usepackage[british]{babel}
\usepackage{paralist}               	
\newcommand{\mk}{\mathfrak}
\newcommand{\mc}{\mathcal}
\newcommand{\ms}{\mathscr}

\newcommand{\mf}{\mathbf}
\newcommand{\mb}{\mathbb}
\newcommand{\mr}{\mathrm}

\newtheorem{theorem}{Theorem}[section]
\newtheorem{corollary}[theorem]{Corollary}

\newtheorem{definition}[theorem]{Definition}

\newtheorem{lemma}[theorem]{Lemma}
\newtheorem{proposition}[theorem]{Proposition}

\newtheorem{remarka}[theorem]{Remark}
\newenvironment{remark}{\begin{remarka}\rm}{\hfill\rule{2mm}{2mm}\end{remarka}}

\newtheorem*{T*}{Theorem}
\newtheorem*{A*}{Proposition}
\newtheorem*{Cor*}{Corollary}
\newtheorem{T}[theorem]{Theorem}
\newtheorem{Le}[theorem]{Lemma}
\newtheorem{R}[theorem]{Remark}
\newtheorem{Cor}[theorem]{Corollary}
\newtheorem{A}[theorem]{Proposition}
\theoremstyle{definitionbreak}\newtheorem{D}[theorem]{Definition}
\theoremstyle{definitionbreak}\newtheorem*{D*}{Definition}

\DeclareMathOperator{\Exists}{\exists}
\DeclareMathOperator{\Forall}{\forall}
\newtheorem{prelem}{{\bf Theorem}}

\def\dist {\mathrm{dist}}

\title{Measures on the square as sparse graph limits}
\author{D\'avid Kunszenti-Kov\'acs, L\'aszl\'o Lov\'asz, Bal\'azs Szegedy}

\begin{document}		
\maketitle

\begin{abstract} We study a metric on the set of finite graphs in which two graphs are considered to be similar if they have similar bounded dimensional ``factors''. We show that limits of convergent graph sequences in this metric can be represented by symmetric Borel measures on $[0,1]^2$. This leads to a generalization of dense graph limit theory to sparse graph sequences. 
\end{abstract}

\section{Introduction}

It was proved in \cite{LSz} that if all subgraph densities converge in a growing graph sequence then there is a natural limit object in the form of a symmetric measurable function $W:[0,1]^2\rightarrow [0,1]$ called {\it graphon}. If the edge density converges to $0$ then all other subgraph densities converge to $0$ and the limit object is the constant $0$ function. Such graph sequences are called {\it sparse}. Many naturally occurring graph sequences are sparse and capturing their limiting structure remains one of the great challenges in graph limit theory. 

In the very sparse case, when the maximum degree is uniformly bounded in the sequence, one can use the so-called Benjamini-Schramm convergence \cite{BS} or a refinement of it called {\it local-global convergence} \cite{BR},\cite{HLSz}. The limit object can be represented by a bounded degree Borel graph (called {\it graphing}) satisfying a certain measure preserving property.  

Both in the dense case and in the bounded degree case the limit object can be equivalently viewed as a symmetric Borel measure $\mu$ on $[0,1]^2$. (Symmetry of $\mu$ means that it is invariant under the map $\tau:[0,1]^2\rightarrow [0,1]^2$ given by $\tau(x,y)=(y,x)$.) In the dense case the graphon $W$ is the Radon-Nikodym derivative of some Borel measure $\mu$ on $[0,1]^2$. In the bounded degree case the edge set of the graphing (when represented on the vertex space $[0,1]$) is essentially equal to the support of some Borel probability measure on $[0,1]^2$ which is uniquely determined by the property that it is uniform on the edge set. The goal of this paper is to develop a limit theory for graphs in which an arbitrary graph sequence has an appropriately convergent subsequence with a limit object of the form of a symmetric Borel measure on $[0,1]^2$.

Our main tool is a variant of Szemer\'edi's regularity lemma \cite{Szem} for sparse graphs that allows us to approximate the set of all possible bounded (say $k$) dimensional, normalized factor matrices (called $k$-{\it shape}) of a given graph. (Other sparse versions of the regularity lemma were proved in \cite{K},\cite{Sc}.)
The study of $k$-shapes goes back to dense graph limit theory where it was observed that the $k$-shape of a graph (when the factor matrices are normalized with the square of the number of vertices) determines the density matrix of a Szemer\'edi partition with precision depending on $k$ and thus it determines the densities of small subgraphs (an even stronger theorem was proved in \cite{BCLSV}). This implies that one can equivalently define dense graph convergence through the convergence of all $k$-shapes in the Hausdorff metric. However if the factor matrices are normalized with the number of edges (multiplied by $2$ for technical reasons) then the convergence of $k$-shapes leads to a non-trivial limit notion (we call it {\it s-convergence}) for sparse graph sequences. Informally speaking our main result is the following.

{\it If all the (edge number normalized) $k$-shapes converge in a graph sequence then the limit can be represented by some symmetric Borel probability measure on $[0,1]^2$ (called an s-graphon). Furthermore every s-graphon arises this way.}

Our main theorem (see Theorem~\ref{thm:shapeMeasure}) is actually slightly more general. It deals with the limits of weighted graphs (non-negative symmetric matrices). It is important to mention that most of our statements and arguments use the cantor set $\mathcal{C}=\{0,1\}^{\mathbb{N}}$ instead of the unit interval $[0,1]$ and the equivalence between the two is proved in Chapter \ref{ch:cantor}. The advantage of using the Cantor set in the proofs is due to its more combinatorial nature. 

Our limit theory can also be considered as a generalization of the so-called $L^p$ theory of sparse graph convergence (see \cite{BCCZ} and \cite{BCCZ2}). Roughly speaking, If a graph sequence converges in the $L^p$ theory (for some fixed $p$) then the sequence also converges in our language and the limit object appears as the Radon-Nikodym derivative of the limit object measure on $[0,1]^2$. The only feature that we lose in general is the connection to subgraph densities. This is the price that we pay for the generality of the limit concept. In exchange we get a representation of all graphs in a compact space which detects non-trivial structure in arbitrarily sparse graphs. This tradeoff is justified in Chapter \ref{ch:ash}, where we investigate in detail how this gained compactness impacts the limit results pertaining to limit objects that are $L^p$ graphons. We relate our approach to the theory developed by Borgs et al., highlighting that both bring important added value to the understanding of graph limits in the non-dense case. 

In our theory, there are examples of convergent graph sequences such that the limit measure on $[0,1]^2$ is not absolutely continuous with respect to the uniform measure and in these cases we can't associate a measurable function with the limit object. We can get limit objects that are measures concentrated on fractal like subsets in $[0,1]^2$.  We propose a possible dimension notion accompanying our limit theory that associates fractional dimensions between $0$ and $2$ with measures on $[0,1]^2$. 

The paper is structured as follows. The definition of graph convergence is introduced in Chapter \ref{ch:conv}. In the same chapter we state two versions of the main theorem: Theorem \ref{thm:shapeMeasure} and Theorem \ref{thm:shpm2}. Chapter \ref{ch:reg} contains the statement and the proof of our general regularity lemma.
The main theorem is proved in Chapter \ref{ch:main}. Chapter \ref{ch:ash} and Chapter \ref{ch:closed} deals mostly with the connections to $L^p$ limit theory. Chapter \ref{ch:conc} investigates various aspects of our limit theory and Chapter \ref{ch:ex} has examples for interesting sparse sequences.

\section{Basics}

\noindent{\bf On the Cantor set:}~~Let $\mathcal{C}=\{0,1\}^{\mathbb{N}}$ be the power of the discrete
topological space $\{0,1\}$. The space $\mathcal{C}$ carries a
natural probability measure $\nu$, namely the product measure of the uniform
measure on $\{0,1\}$. The elements of $\mathcal{C}$ can be represented by infinite pathes in an infinite rooted binary tree $B$. A node
$x\in V(B)$ of $B$ represents an open-closed subset $S_x$ of $\mathcal{C}$
by collecting all the pathes going through $x$. These sets form a basis for the topology on $\mathcal{C}$. By
Tychonoff's theorem, $\mathcal{C}$ is a compact space, and so the
subsets $S_x$ are compact too. In fact, even stronger properties hold.

\begin{proposition}\label{Radon}
Both $\mathcal{C}$ and $\mathcal{C}\times \mathcal{C}$ are compact Polish spaces, and equipped with their Borel $\sigma$-algebra, they are Radon spaces (i.e., any finite Borel measure is automatically inner regular, and so a Radon measure).
\end{proposition}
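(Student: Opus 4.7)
The plan is to verify the three properties---compact, Polish, and Radon---for $\mathcal{C}$ and then transfer each to $\mathcal{C}\times\mathcal{C}$ by standard finite-product arguments. Compactness of $\mathcal{C}$ is Tychonoff's theorem (as already noted in the text), and compactness of $\mathcal{C}\times\mathcal{C}$ follows by Tychonoff applied to a finite product. For the Polish property, I would exhibit the explicit ultrametric $d(x,y) = 2^{-\min\{n:\, x_n\ne y_n\}}$ (with $d(x,x)=0$), which induces the product topology because the ball of radius $2^{-n}$ around $x$ is precisely the cylinder $S_{x|_n}$ given by the prefix of length $n$. A Cauchy sequence in this metric agrees on longer and longer prefixes and hence converges coordinatewise, so the metric is complete; the countable basis $\{S_x : x \in V(B)\}$ already described gives second countability and therefore separability. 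The max metric on $\mathcal{C}\times\mathcal{C}$ is likewise complete and separable.

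For the Radon property, the crucial simplification is that since $\mathcal{C}$ and $\mathcal{C}\times\mathcal{C}$ are compact, every closed subset is automatically compact. It therefore suffices to show that any finite Borel measure $\mu$ on a compact metric space is inner regular by closed sets, which is a classical fact. I would argue that the collection $\mathcal{A}$ of Borel sets $B$ satisfying both
\[
\mu(B)=\sup\{\mu(F): F\subseteq B,\ F \text{ closed}\} \quad\text{and}\quad \mu(B)=\inf\{\mu(U): U\supseteq B,\ U \text{ open}\}
\]
forms a $\sigma$-algebra. Closure under complements is immediate from the duality between open and closed sets. For closure under a countable union $B=\bigcup_n B_n$, outer regularity is handled by choosing open $U_n\supseteq B_n$ with $\mu(U_n\setminus B_n)<\varepsilon\cdot 2^{-n}$ and taking $U=\bigcup_n U_n$; inner regularity is handled by choosing closed $F_n\subseteq B_n$ with $\mu(B_n\setminus F_n)<\varepsilon\cdot 2^{-n}$ and, using finiteness of $\mu(B)$, truncating the union $\bigcup_n F_n$ to a sufficiently large $\bigcup_{n\le N} F_n$ (still closed) that approximates $B$ from inside. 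Every closed set $F$ lies in $\mathcal{A}$ since $F=\bigcap_n U_n$ where $U_n=\{x:d(x,F)<1/n\}$, by monotone continuity of $\mu$. Thus $\mathcal{A}$ contains the Borel $\sigma$-algebra, and in the compact setting this immediately upgrades to inner regularity by compact sets.

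The only mildly delicate point is the truncation step in showing that $\mathcal{A}$ is closed under countable unions, and this is entirely routine given that $\mu$ is finite. There is no substantial obstacle: the proposition is really a synthesis of Tychonoff's theorem with the classical regularity of finite Borel measures on metric spaces, simplified by the fact that in a compact space ``closed'' and ``compact'' coincide.
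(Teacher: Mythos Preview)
Your argument is correct and is the standard route: Tychonoff for compactness, the explicit ultrametric for the Polish property, and the classical $\sigma$-algebra argument for regularity of finite Borel measures on metric spaces, with compactness collapsing ``closed'' and ``compact''. Note, however, that the paper does not actually supply a proof of this proposition; it is stated as a known fact and used as such (e.g., to invoke Lusin's theorem later). So there is nothing to compare against---your write-up simply fills in what the paper takes for granted, and does so correctly.
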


\begin{lemma}\label{mertek}
Let $\mu:V(B)\rightarrow \mathbb{R}^+\cup\{0\}$ satisfy $\mu(x)=\mu(x_1)+\mu(x_2)$
for all $x\in V(B)$ where $x_1$ and $x_2$ are the two
children of the node $x$. Then the map $S_x\mapsto \mu(x)$ extends to a Borel measure on
$\mathcal{C}$.
\end{lemma}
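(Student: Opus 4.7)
The plan is to extend $\mu$ first to the algebra generated by the basis sets $\{S_x : x \in V(B)\}$ and then to invoke Carath\'eodory's extension theorem, using the compactness of $\mathcal{C}$ to get countable additivity essentially for free.

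First I would observe that any $S_x$ at level $n$ is the disjoint union of the $S_y$ for $y$ at level $n+k$ that descend from $x$, and the hypothesis $\mu(x)=\mu(x_1)+\mu(x_2)$ implies by induction that $\mu(x)=\sum_y \mu(y)$ over those descendants. Consequently, if I let $\mathcal{A}$ be the collection of all finite disjoint unions of basis sets $S_x$, every element of $\mathcal{A}$ admits a representation $\bigsqcup_{i} S_{x_i}$ whose $x_i$ lie at a common level of $B$. Defining $\mu\bigl(\bigsqcup_i S_{x_i}\bigr):=\sum_i \mu(x_i)$ on such a common-level representation and checking that two different common-level representations of the same set must coincide (they correspond to the same subset of that level of the tree), I get a well-defined finitely additive set function on $\mathcal{A}$, which is easily seen to be an algebra generating the Borel $\sigma$-algebra of $\mathcal{C}$.

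The crucial step is countable additivity on $\mathcal{A}$. Here the topology does the work: each $S_x$ is clopen, hence every element of $\mathcal{A}$ is a clopen, therefore compact, subset of $\mathcal{C}$ by Proposition~\ref{Radon}. Given a decreasing sequence $A_1 \supseteq A_2 \supseteq \cdots$ in $\mathcal{A}$ with $\bigcap_n A_n=\emptyset$, the finite intersection property forces some $A_N=\emptyset$, so trivially $\mu(A_n)\to 0$. Combined with finite additivity this yields countable additivity of $\mu$ on $\mathcal{A}$.

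Finally I would invoke Carath\'eodory's extension theorem to produce a (unique) Borel measure on $\mathcal{C}$ agreeing with $\mu$ on the generating basis $\{S_x\}$; finiteness of $\mu(\emptyset$-root$)=\mu(\mathcal{C})$ guarantees uniqueness. The only step requiring care is verifying well-definedness of $\mu$ on $\mathcal{A}$, i.e.\ that the value assigned to $\bigsqcup_i S_{x_i}$ does not depend on the chosen common-level refinement; this is a routine induction using $\mu(x)=\mu(x_1)+\mu(x_2)$, and the rest is standard machinery.
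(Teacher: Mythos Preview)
Your proposal is correct and follows essentially the same route as the paper: extend $\mu$ to the ring (algebra) generated by the clopen basis sets using common-level representations, use compactness of the clopen sets to upgrade finite additivity to $\sigma$-additivity, and then apply Carath\'eodory. The only cosmetic difference is that the paper verifies $\sigma$-additivity by noting that any countable disjoint cover of a set in the ring by sets in the ring must be finite, whereas you phrase it via continuity from above; these are equivalent and rest on the same compactness argument.
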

\begin{proof}
Let $S$ be the ring generated by the set system $\{S_x \ | \ x\in
V(B)\}$. It is easy to see that every element of $S$ can be expressed as a finite
disjoint union $\cup_{i\in I}S_{x_i}$ where the nodes $x_i$ are at
the same level of the tree $B$. Using the additivity condition,
$\mu$ extends to $S$ as a finitely-additive function. We claim that
$\mu$ is $\sigma$-additive on $S$. Let $s\in S$ be a
disjoint union of countably many elements from $S$. Since $s$ is a
compact set and each element of $S$ is open, this union
has finitely many non-empty terms. Now we can use Caratheodory's
extension theorem to obtain that $\mu$ extends to the
$\sigma$-algebra generated by $S$ as a measure. Since $S$ is a basis
for the topology, we get that $\mu$ extends to a measure on the
Borel sets.
\end{proof}

\medskip

\noindent{\bf Limits of tables:}~~Let $\{T_n\}_{n=0}^\infty$ be an infinite sequence of nonnegative
real matrices such that $T_n$ is of size $2^n\times 2^n$. The rows and
columns of $T_n$ are indexed by the nodes from the $n$-th level of
the rooted binary tree $B$. We denote by $T_n(x,y)$ the entry of
$T_n$ in the intersection of the row $x$ and column $y$.

\begin{definition}
The system $\{T_n\}_{n=0}^\infty$ is called {\bf consistent} if
$$T_n(x,y)=T_{n+1}(x_1,y_1)+T_{n+1}(x_1,y_2)+T_{n+1}(x_2,y_1)+T_{n+1}(x_2,y_2),$$
for all $n=0,1,2,\ldots$, and $x,y$ where $x_1,x_2$, and $y_1,y_2$ are respectfully the
children of $x$ and $y$.
\end{definition}

\begin{lemma}\label{tabcon}
If $\{T_n\}_{n=0}^\infty$ is consistent,  then there is a measure
$\mu$ on $\mathcal{C}^2$ such that $$\mu(S_x\times S_y)=T_n(x,y)$$
where $x$ and $y$ are at the $n$-th level of the binary tree $B$.
\end{lemma}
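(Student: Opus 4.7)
The plan is to mirror the proof of Lemma \ref{mertek} in the product setting. View $\mathcal{C}^2=\mathcal{C}\times\mathcal{C}$ as the space of paths in the $4$-ary tree $B^{(2)}$ whose nodes at level $n$ are the pairs $(x,y)$ with $x,y$ at level $n$ in $B$, and whose four children of $(x,y)$ are $(x_i,y_j)$ for $i,j\in\{1,2\}$. Each such node corresponds to the clopen rectangle $S_x\times S_y\subseteq\mathcal{C}^2$, and the family $\{S_x\times S_y\}$ is a countable basis for the product topology, consisting of compact open sets.

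Define the set-function $\mu(S_x\times S_y):=T_n(x,y)$ when $x,y$ sit at level $n$. The consistency hypothesis is precisely the statement that $\mu$ is additive on the four-way split of a rectangle into its children, i.e., the analogue of the splitting condition in Lemma \ref{mertek}. Next, let $S$ be the ring generated by $\{S_x\times S_y\}$. By refining to the deepest level that occurs and using the consistency relation repeatedly to ``push down'' coarser rectangles, every element of $S$ can be written as a finite disjoint union $\bigsqcup_{i\in I}(S_{x_i}\times S_{y_i})$ with all pairs $(x_i,y_i)$ at a common level $n$; finite additivity of $\mu$ on $S$ then follows immediately from iterating the consistency identity.

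To upgrade to $\sigma$-additivity on $S$, suppose $s\in S$ is a countable disjoint union $s=\bigsqcup_k s_k$ of elements of $S$. Since $s$ is a finite union of compact rectangles it is itself compact, while each $s_k$ is open (being a finite union of clopen rectangles), so by compactness only finitely many $s_k$ are nonempty and $\sigma$-additivity reduces to the finite case already established. Carath\'eodory's extension theorem then extends $\mu$ to a measure on the $\sigma$-algebra generated by $S$. Because $\{S_x\times S_y\}$ is a basis for the topology of $\mathcal{C}^2$, this $\sigma$-algebra coincides with the Borel $\sigma$-algebra, yielding the desired Borel measure with $\mu(S_x\times S_y)=T_n(x,y)$.

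I expect no genuine obstacle: the argument is a direct lift of Lemma \ref{mertek} from the binary to the quaternary branching setting. The only point requiring a little care is the verification that every element of the generated ring admits a common-level disjoint representation, which is where the consistency identity does the essential work; once this is in hand, the compactness step that kills the countable additivity issue is automatic because each basic rectangle is clopen in a compact Hausdorff space.
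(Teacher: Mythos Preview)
Your proposal is correct and follows essentially the same approach as the paper: both introduce the quaternary product tree $B\times B$ (your $B^{(2)}$), observe that consistency is exactly the four-child additivity needed to rerun the argument of Lemma~\ref{mertek}, and then invoke compactness of the clopen basis plus Carath\'eodory's extension theorem. The paper is terser---it simply says ``using the same proof as in Lemma~\ref{mertek}''---while you spell out the common-level representation and the compactness step, but the underlying argument is identical.
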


\begin{proof}
Let us introduce the product tree $B\times B$ whose nodes are
ordered pairs $(x,y)$ where $x$ and $y$ are at the same level in
$B$. The node $(x,y)$ has $4$ children:
$$(x_1,y_1),(x_2,y_1),(x_1,y_2),(x_2,y_2)$$ where $x_1,x_2$, and $y_1,y_2$
are the children of $x$ and $y$, respectfully. A consistent sequence $\{T_n\}_{n=0}^\infty$
defines a function $\mu: B\times B\rightarrow\mathbb{R}^+\cup\{0\}$
by $\mu: (x,y) \mapsto T_n(x,y)$. This function is analogous to the one in Lemma~\ref{mertek}, and
using the same proof as in Lemma~\ref{mertek}, the function
$$\mu:(S_x\times S_y)\rightarrow\mu(x,y)$$ extends to a Borel
measure on $\mathcal{C}\times\mathcal{C}$.
\end{proof}

\section{Balanced partitions}\label{ch:bp}

Let $\Omega$ be a finite set of $n$ elements. We say that
$\Omega=\cup_{i=1}^k\Omega_i$ is a balanced partition of $\Omega$, if
$|\Omega_i|\in\{\lceil n/k \rceil , \lfloor n/k \rfloor\}$ for all
$1\leq i \leq k$. It is easy to see that every finite set has a
balanced partition for every natural number $k$, and the multi set
$\{|\Omega_1|,|\Omega_2|,\ldots,|\Omega_k|\}$ is uniquely determined
by $n$ and $k$. Each partition of $\Omega$ has a characteristic $k \times |\Omega|$
matrix $M$ whose rows are the characteristic vectors of the
partition sets. Each column sum of $M$ is $1$, and if $n$ divides $k$ and the partition is balanced,
then each row sum is $n/k$. We
denote the set of all balanced $k \times n$ partition matrices by
$\widehat{\mathcal{K}}(k,n)$.

Let $S$ be a non-negative matrix whose rows and columns are indexed
by a finite set $\Omega$ with $n$ elements and let $k$ be a fixed
natural number. For a balanced partition
$\mathcal{P}=\{\Omega_1,\Omega_2,\dots,\Omega_k\}$ of $\Omega$, we
define a matrix $\mathcal{P}(S)$ whose $ij$-th entry is $$\sum_{x\in\Omega_i, y\in\Omega_j}S_{x,y}.$$ We
denote by $\widehat{C}(S,k)$ the set of all matrices that can be
obtained as $\mathcal{P}(S)$ for some balanced partition
$\mathcal{P}$ with $k$ sets.

Let $\mathcal{K}(k,n)$ denote the set of all nonnegative $k$-by-$n$
matrices with each row sum equal to $n/k$ and  each column sum equal to
$1$. For an arbitrary $n\times n$ matrix $S$ and a natural number
$k$ we define a shape $C(S,k)$ in $\mathbb{R}^{k\times k}$ in the
following way:

$$C(S,k):=\{MSM^T~|~M\in \mathcal{K}(k,n)\}.$$ Since $\mathcal{K}(k,n)$ is compact,
we have that $C(S,k)$ is a compact subset of the set of real $k$ by $k$
matrices. It is also important to mention that the set $C(S,k)$ is
invariant under conjugation with permutation matrices. For an
arbitrary set of matrices $\mathscr{S}$, we define $C(\mathscr{S},k)$
as
\[
C(\mathscr{S},k):=\bigcup_{s\in\mathscr{S}}C(S,k).
\]
If $\mathscr{S}$ is a compact subset of $n\times n$ matrices, then
$C(\mathscr{S},k)$ is also a compact set which depends continuously
on $\mathscr{S}$.

We can define an analogy of $C(S,k)$ for a Borel measure $\mu$ on
$\mathcal{C}\times\mathcal{C}$ with finite total measure. Let
$f_1,f_2,\dots,f_k$ be nonnegative (Borel) measurable functions
on $\mathcal{C}$ with the property that their sum is the
constant function $1$ and that for all $1\leq i\leq k$,
\[
\int_{\mathcal{C}} f_i~ d\nu=1/k.
\]
For such a sequence of functions, we define a $k\times k$ matrix $M$ by setting
\[
M(i,j):=\int_{(x,y)\in \mathcal{C}\times\mathcal{C}}f_i(x)f_j(y) d\mu.
\]
Sometimes we denote this matrix by $\mathcal{M}(f_1,f_2,\dots,f_k)$. Let so
$C_0(\mu,k)$ be the set of all matrices $\mathcal{M}(f_1,f_2,\dots,f_k)$ for all possible choices of suitable
functions $f_1,f_2,\dots,f_k$. We denote by $C(\mu,k)$ the topological closure of
$C_0(\mu,k)$.

In addition to these measurable shapes, it will sometimes be more convenient to work with continuous functions only. Let therefore $C_c(\mu,k)$ denote the set of all matrices $\mathcal{M}(f_1,f_2,\dots,f_k)$ where the functions $f_i$ are in addition continuous on $\mathcal{C}$.

\begin{lemma}
For any symmetric finite Borel measure $\mu$ on $\mathcal{C}\times \mathcal{C}$ we have that $C_0(\mu,k)\subseteq \overline{C_c(\mu,k)}$ for every natural number $k$.
If $\mu$ is absolutely continuous with respect to $\nu\times\nu$, then for every natural number $k$, the set $C_0(\mu,k)$ is closed, and so $C_0(\mu,k)=C(\mu,k)$.
\end{lemma}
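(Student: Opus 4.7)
The plan is to handle the two claims separately. For the first, I construct a continuous partition of unity satisfying the prescribed $\nu$-integrals that approximates any Borel one in the $L^1(\mu_1+\mu_2+\nu)$ sense. For the second, I use weak-$*$ compactness of the unit ball of $L^\infty(\nu)$ together with the factorization of $\mu$ through $\nu\times\nu$.

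For the first claim, fix $(f_1,\dots,f_k)$ with $f_i\ge 0$, $\sum_i f_i=1$ a.e., and $\int f_i\,d\nu=1/k$, and let $\mu_1,\mu_2$ be the marginals of $\mu$ and $\rho=\nu+\mu_1+\mu_2$. Since the clopen cylinder indicators span a subalgebra of $C(\mathcal{C})$ that is dense in $L^1(\rho)$, for every $\eta>0$ one can pick continuous $h_i:\mathcal{C}\to[0,1]$ with $\|h_i-f_i\|_{L^1(\rho)}<\eta$. To restore the pointwise sum condition, set
\[
\tilde h_i=\frac{h_i+\delta/k}{\sum_j h_j+\delta},\qquad\delta>0.
\]
These are continuous, uniformly bounded below by a positive constant $c(\delta)$, and sum pointwise to $1$. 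A direct estimate using $\sum_j h_j\approx 1$ in $L^1(\rho)$ yields $\tilde h_i\to f_i$ in $L^1(\rho)$ when $\eta$ is small enough relative to $\delta$. Finally, enforce $\int g_i\,d\nu=1/k$ by a constant shift: with $a_i:=\int\tilde h_i\,d\nu$, set $g_i:=\tilde h_i+(1/k-a_i)$. Since $\sum_i(1/k-a_i)=0$, the new functions still sum to $1$, and for $\eta$ small enough $\max_i|a_i-1/k|<c(\delta)$ so that $g_i\ge 0$. Using the identity $g_ig_j-f_if_j=(g_i-f_i)g_j+f_i(g_j-f_j)$ together with the uniform $L^\infty$-bound and $g_i\to f_i$ in $L^1(\mu_1+\mu_2)$ then gives $\mathcal M(g_1,\dots,g_k)\to\mathcal M(f_1,\dots,f_k)$ in $\mathbb R^{k\times k}$.

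For the second claim, write $d\mu=W\,d(\nu\times\nu)$ with $W\in L^1(\nu\times\nu)$, and let $\mathcal M^{(n)}=\mathcal M(f_1^{(n)},\dots,f_k^{(n)})\in C_0(\mu,k)$ converge to some matrix $\mathcal M$. Each $f_i^{(n)}$ lies in the unit ball of $L^\infty(\nu)$, so Banach--Alaoglu together with diagonalization yields a subsequence with $f_i^{(n)}\xrightarrow{w^*} f_i$ in $L^\infty(\nu)$ for every $i$. The limits inherit $f_i\ge 0$, $\sum_i f_i=1$, and $\int f_i\,d\nu=1/k$ almost everywhere (the constraints are preserved by weak-$*$ limits and can be rectified on a null set). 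For the matrix entries, define $G_j^{(n)}(x):=\int f_j^{(n)}(y)W(x,y)\,d\nu(y)$; testing weak-$*$ convergence against $W(x,\cdot)\in L^1(\nu)$ gives $G_j^{(n)}(x)\to G_j(x)$ pointwise for $\nu$-a.e.\ $x$, and the uniform bound $|G_j^{(n)}(x)|\le W_1(x):=\int W(x,y)\,d\nu(y)\in L^1(\nu)$ (Fubini) upgrades this by dominated convergence to $G_j^{(n)}\to G_j$ in $L^1(\nu)$. Splitting
\[
\int f_i^{(n)}G_j^{(n)}\,d\nu-\int f_iG_j\,d\nu=\int f_i^{(n)}(G_j^{(n)}-G_j)\,d\nu+\int(f_i^{(n)}-f_i)G_j\,d\nu,
\]
bounding the first term by $\|f_i^{(n)}\|_\infty\|G_j^{(n)}-G_j\|_{L^1(\nu)}$ and the second by weak-$*$ convergence against $G_j\in L^1(\nu)$ shows $\mathcal M^{(n)}(i,j)\to\mathcal M(f_1,\dots,f_k)(i,j)$. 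Hence $\mathcal M\in C_0(\mu,k)$, so $C_0(\mu,k)$ is closed.

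The main technical obstacle in part one is reconciling all three constraints (nonnegativity, pointwise sum, prescribed $\nu$-integrals) while only approximating $f_i$ in the $L^1(\rho)$ sense; the positive offset $\delta/k$ in $\tilde h_i$ is the device that produces the uniform lower bound needed to absorb the subsequent additive correction of the $\nu$-integrals. In part two the delicate point is that absolute continuity of $\mu$ with respect to $\nu\times\nu$ is precisely what allows the weak-$*$ convergence of each coordinate to propagate through the bilinear map $(f,g)\mapsto\int f(x)g(y)W(x,y)\,d(\nu\times\nu)$; in the general case such propagation fails, as is already witnessed by $\mu$ a point mass on the diagonal, which is why closedness of $C_0(\mu,k)$ is asserted only in the absolutely continuous regime.
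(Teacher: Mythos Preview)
Your argument is correct in both parts, but the techniques differ from the paper's in interesting ways.

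\textbf{Part 1.} The paper invokes Lusin's theorem to obtain continuous $g_i$ agreeing with $f_i$ off a set small in both $\nu$ and the marginal $\mu^1$, and then repairs the $\nu$-integrals by an iterative truncation device $F^{\alpha}:=\min\{\alpha,F\}$, tracking how much mass is moved at each of the $\le k-1$ steps. Your route---$L^1(\rho)$-approximation, the normalization $\tilde h_i=(h_i+\delta/k)/(\sum_j h_j+\delta)$, then an additive constant shift---is more streamlined and avoids the iteration. One point that deserves to be spelled out more carefully: for fixed $\delta>0$, sending $\eta\to 0$ drives $\tilde h_i$ not to $f_i$ but to $(f_i+\delta/k)/(1+\delta)$, so the convergence $\tilde h_i\to f_i$ genuinely needs a double limit ($\delta\to 0$, with $\eta\to 0$ sufficiently fast depending on $\delta$). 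This is compatible with your nonnegativity requirement $|a_i-1/k|<c(\delta)$, since for fixed $\delta$ one has $a_i\to \frac{1/k+\delta/k}{1+\delta}=1/k$ as $\eta\to 0$; but the phrase ``$\tilde h_i\to f_i$ when $\eta$ is small enough relative to $\delta$'' slightly obscures this two-parameter structure.

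\textbf{Part 2.} Here your argument is arguably cleaner than the paper's. The paper extracts weak-$*$ limits simultaneously in $L^\infty(\nu)$ and in $L^\infty(\nu\times\nu)$ (for the tensor products $f_{j,r_i}\otimes f_{\ell,r_i}$), and then has to prove a separate factorization lemma showing that the two-variable limit $g_{j,\ell}$ equals $g_j\otimes g_\ell$ almost everywhere before pairing against $W$. Your slicing $G_j^{(n)}(x)=\int f_j^{(n)}(y)W(x,y)\,d\nu(y)$, combined with pointwise-a.e.\ convergence from weak-$*$ duality against $W(x,\cdot)\in L^1(\nu)$ and dominated convergence with the integrable majorant $W_1$, bypasses that factorization step entirely. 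Both arguments of course rely on separability of $L^1(\nu)$ to extract a weak-$*$ convergent subsequence from the unit ball of $L^\infty(\nu)$.
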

\begin{proof}
For the first part, let $X\in C_0(\mu,k)$, and suppose that $f_1,f_2,\ldots,f_k$ are corresponding measurable witnesses. Further, let $\mu^1$ denote the marginal of $\mu$ on $\mathcal{C}$ (since $\mu$ is symmetric, its two marginals are equal). Now, since by Proposition \ref{Radon} $(\mathcal{C},\mathscr{B}(\mathcal{C}),\mu^1)$ is a Radon measure space on a compact set, by Lusin's theorem we can for any $\varepsilon>0$ find a compact set $E\subset\mathcal{C}$ with $\mu^1(\mathcal{C}\backslash E)<\varepsilon$ and $\nu(\mathcal{C}\backslash E)<\varepsilon$, together with continuous functions $g_1,g_2,\ldots,g_k$ on $\mathcal{C}$ taking values in $[0,1]$ such that for each $1\leq i\leq k$ we have
$g_i\vert_E=f_i\vert_E$. It is not hard to see that since $\sum_{i=1}^k f_i\equiv 1$ is a continuous function, we may assume that we also have $\sum_{i=1}^k g_i\equiv 1$. Since each $g_i$ differs from the corresponding $f_i$ on a set of small measure only, we have that
\[
\left|\int g_i d\nu-\frac{1}{k}\right|=\left|\int (f_i-g_i) d\nu\right|\leq \varepsilon
\]
for each $i$. We would like to turn the family $g_1,g_2,\ldots,g_k$ into a witness for an element in $C_c(\mu,k)$, but their integrals still don't quite match up. We shall therefore iteratively change the $g_i$-s in such a way that they remain continuous and nonnegative, and also their sum is still the constant 1 function, but after each step at least one additional function will have integral exactly $1/k$. Thus in at most $k-1$ steps we shall end up with a witness family.\\
Given a continuous function $F:\mathcal{C}\to[0,1]$ and a parameter $\alpha\in[0,1]$, define the continuous function
\[
F^\alpha(x):=\min\{\alpha,F(x)\}.
\]
This has the property that the integral $\int_\mathcal{C} F^\alpha d\nu$ is a continuous, monotone increasing function of $\alpha$. Suppose now that for some $1\leq\ell\leq k$ the function $g_\ell$ has an integral greater than $1/k$ (unless all are exactly $1/k$, at least one such $\ell$ exists). 

Since 
\[
\nu([g_\ell>\alpha])+\alpha(1-\nu([g_\ell>\alpha]))\geq \int_\mathcal{C} g_\ell^{\alpha} d\nu \geq \frac{1}{k},
\]
we have 
\[
\nu([g_\ell>\alpha])\geq \frac{\frac{1}{k}-\alpha}{1-\alpha},
\]
and so
\[
\int_\mathcal{C} g_\ell^{2k\varepsilon} d\nu\geq 2k\varepsilon \frac{\frac{1}{k}-2k\varepsilon}{1-2k\varepsilon}
=\varepsilon \frac{2-4k^2\varepsilon}{1-2k\varepsilon}>\varepsilon
\]
for whenever $\varepsilon<\frac{1}{4k^2}$. Then, by continuity of the integral as a function of $\alpha$, there exists a $0<\beta<2k\varepsilon$ such that 
\[
\int_{\mathcal{C}} g_\ell^\beta d\nu=\int_{\mathcal{C}} g_\ell d\nu -\frac{1}{k},
\]
whence $g_\ell':=g_\ell-g_\ell^\beta$ is a nonnegative function with integral exactly $1/k$. Replacing $g_\ell$ by $g_\ell'$, and one of the other $g_i$-s with integral not equal to $1/k$ -- say $g_j$ -- by $g_j+g_\ell^\beta$, we obtain a new family of functions with the desired properties.
Note that during one such step, no function changed by more than $2k\varepsilon$ in $\|\cdot\|_\infty$-norm.\\
Thus at the end of the process -- after at most $(k-1)$ steps -- we obtain a family of nonnegative continuous functions $(\widetilde{g}_1,\widetilde{g}_2,\ldots,\widetilde{g}_k)$ that generate the element
$Y:=\mathcal{M}(\widetilde{g}_1,\widetilde{g}_2,\ldots,\widetilde{g}_k)\in C_c(\mu,k)$, and such that for each $1\leq i\leq k$ we have 
\[
\|g_i-\widetilde{g}_i\|_\infty<(k-1)\cdot2k\varepsilon<2k^2\varepsilon.
\]
This means that for any pair of indices $j_1,j_2$, we have with the notation $D_{j_1,j_2}(x,y):=f_{j_1}(x)f_{j_2}(y)-\widetilde{g}_{j_1}(x)\widetilde{g}_{j_2}(y)$ that
\begin{align*}
\left|X_{j_1,j_2}-Y_{j_1,j_2}\right|&=
\left| \int_{(x,y)\in\mathcal{C}^2}
D_{j_1,j_2}(x,y)~d\mu
\right|\leq
\left| \int_{(x,y)\in E^2}
D_{j_1,j_2}(x,y)~d\mu
\right|\\&+
\int_{(x,y)\in \left(\mathcal{C}\backslash E\right)\times E}
\left| D_{j_1,j_2}(x,y)\right|~d\mu
+
\int_{(x,y)\in \left(E\times \mathcal{C}\backslash E\right)}
\left| D_{j_1,j_2}(x,y)\right|~d\mu\\
&\leq
\mu(E^2)\cdot \|f_{j_1}-\widetilde{g}_{j_1}\|_\infty\cdot \|f_{j_2}-\widetilde{g}_{j_2}\|_\infty
+2\mu(E\times\left(\mathcal{C}\backslash E\right))\\
&\leq 4k^4\varepsilon^2 +2 \mu^1(E)\left(1-\mu^1(E)\right)\leq \varepsilon(4k^4+2),
\end{align*}
hence $\|X-Y\|_\infty\leq \varepsilon(4k^4+2)$.
Since $\varepsilon$ can be chosen to be arbitrarily small, we obtain that indeed $X\in\overline{C_c(\mu,k)}$.

\noindent Let us now turn to the second part. Suppose that the sequence $(X_i)\subset C_0(\mu,k)$ is convergent.
Then for every natural number $i$ we have a family of nonnegative witness functions
$f_{1,i},f_{2,i},\dots,f_{k,i}$ with $\sum_{j=1}^{k}f_{j,i}=1$~,~$\int_{\mathcal{C}}f_{j,i}~d\nu =1/k$
such that for each pair $j_1,j_2$, as $i$ goes to infinity, the integrals $$\int_{(x,y)\in
\mathcal{C}^2}f_{j_1,i}(x)f_{j_2,i}(y)~d\mu$$ converge to
some fixed value.

 We can choose an infinite sequence
$r_1,r_2,\dots$ from the natural numbers such that for all $j,\ell$ the
sequences $f_{j,r_i}$ and $f_{j,r_i}(x)f_{\ell,r_i}(y)$ are weak-* convergent in
$L^\infty(\nu)$ and $L^\infty(\nu\times\nu)$, respectively. Now let $g_{j}$ be the weak-* limit of $f_{j,r_i}$, and $g_{j,\ell}$ that of $f_{j,r_i}(x)f_{\ell,r_i}(y)$.
Note that clearly $\int_{\mathcal{C}}g_{j}~d\nu =1/k$, but also $\int_{A}\sum_{j=1}^k g_{j}~d\nu =\nu(A)$ for all $A\in\mathscr{B}(\mathcal{C})$, and so $\sum_{j=1}^{k}g_{j}=1$.
Thus this family induces a matrix $X:=\mathcal{M}(g_1,g_2,\ldots g_k)\in C_0(\mu,k)$. Our aim is to show that $X$ is the limit of the matrices $X_i$.
\\
Consider the Radon-Nikodym derivative $W\in L^1(\nu\times\nu)$ of the measure $\mu$, which is nonnegative and symmetric by the assumption on $\mu$. 
First, we show that $g_{j,\ell}(x,y)=g_j(x)g_\ell(y)$ holds $\nu\times\nu$-almost everywhere. Since the characteristic functions $\chi_{A\times B}$ with $A,B\in\mathscr{B}(\mathcal{C})$ generate a dense subspace in $L^1(\nu\times\nu)$, this amounts to the weak evaluations
\[
\langle \chi_{A\times B}, g_{j,\ell}\rangle_{L^1(\nu\times\nu),L^\infty(\nu\times\nu)}
\]
and
\[
\langle \chi_{A\times B}, g_j\otimes g_\ell\rangle_{L^1(\nu\times\nu),L^\infty(\nu\times\nu)}
\]
being equal. But this follows from
\[
\langle \chi_{A\times B}, g_j\otimes g_\ell\rangle_{L^1(\nu\times\nu),L^\infty(\nu\times\nu)}=
\langle \chi_{A}, g_j\rangle_{L^1(\nu),L^\infty(\nu)}
\cdot
\langle \chi_{B}, g_\ell\rangle_{L^1(\nu\times\nu),L^\infty(\nu\times\nu)}
\]
and taking the limit in the equalities
\begin{eqnarray*}
&&\langle \chi_{A\times B}, f_{j,r_i}\otimes f_{\ell,r_i}\rangle_{L^1(\nu\times\nu),L^\infty(\nu\times\nu)}
=\int_{A\times B} f_{j,r_i}\otimes f_{\ell,r_i}d\nu\times\nu\\
&=&
\left(\int_{A} f_{j,r_i} d\nu\right)\cdot\left(\int_{B} f_{\ell,r_i} d\nu\right)
=\langle \chi_{A}, f_{j,r_i}\rangle_{L^1(\nu),L^\infty(\nu)}\cdot \langle \chi_{B}, f_{\ell,r_i}\rangle_{L^1(\nu),L^\infty(\nu)}
\end{eqnarray*}

But then we have
\begin{align*}
&X(j,\ell)=\int_{\mathcal{C}\times \mathcal{C}} g_{j}\otimes g_{\ell} d\mu=
\int_{\mathcal{C}\times \mathcal{C}} g_{j,\ell} d\mu=\int_{\mathcal{C}\times \mathcal{C}} g_{j,\ell}\cdot W d\nu\times\nu\\
&=
\lim_{i\to\infty}\int_{\mathcal{C}\times \mathcal{C}} f_{j,r_i}\otimes f_{\ell,r_i}\cdot W d\nu\times\nu=
\lim_{i\to\infty}\int_{\mathcal{C}\times \mathcal{C}} f_{j,r_i}\otimes f_{\ell,r_i} d\mu=\lim_{i\to\infty} X_{r_i} (j,\ell),
\end{align*}
concluding the proof.
\end{proof}

\section{The notion of convergence and the representation of the limit}\label{ch:conv}
Since the sum of the entries of matrices $S$ will play an important role, we introduce the short hand notation $\gamma(S)$ for this quantity. It is easy to see that $\gamma(S)=\gamma(X)$ for all $k$ and $X\in C(S,k)$.\\

\begin{definition}\label{def:matconv}
Let $c>0$ be a constant and $\{S_i\}_{i=1}^\infty$ be a sequence of non-negative matrices that satisfy $\gamma(S_i)<c$ for every $i$. We say that the sequence $\{S_i\}_{i=1}^\infty$ is convergent if  for every natural number $k$, the shapes
$C(S_i,k)$ are converging to some fixed closed set in
$[0,c]^{k\times k}$ with respect to the Hausdorff topology.
\end{definition}

Note that when speaking of the Hausdorff distance of compact subsets of $\mathbb{R}^{k\times k}$, we mean the metric $\mathrm{dist}(\cdot,\cdot)$ induced by the $\ell_1$ norm of $\mathbb{R}^{k\times k}$.
Now we can formulate our main theorem regarding the global limits.

\begin{theorem}\label{thm:shapeMeasure}
If $\{S_i\}_{i=1}^\infty$ is a convergent sequence of non-negative symmetric matrices then there
exists a symmetric Borel measure $\mu$ on $\mathcal{C}\times\mathcal{C}$
such that for all natural numbers $k$, the limit shape of $C(S_i,k)$ is $C(\mu,k)$.
\end{theorem}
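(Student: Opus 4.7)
The plan is to construct $\mu$ by applying Lemma~\ref{tabcon} to a consistent tower of dyadic partition matrices extracted from $\{S_i\}$, and then to verify $\lim_i C(S_i,k)=C(\mu,k)$ by establishing each inclusion separately. For each $i$ I pick a nested chain of balanced dyadic partitions $\mathcal{P}_{0,i}\preceq\mathcal{P}_{1,i}\preceq\cdots$ of the row/column index set of $S_i$, with $\mathcal{P}_{n,i}$ having $2^n$ parts, and set $T_{n,i}:=\mathcal{P}_{n,i}(S_i)\in C(S_i,2^n)$. Refinement makes the $T_{n,i}$ satisfy the four-way consistency relation of the earlier definition. Since $C(S_i,2^n)$ Hausdorff-converges to some compact set $K_{2^n}\subset[0,c]^{2^n\times 2^n}$, a diagonal subsequence extracts $T_n\in K_{2^n}$ with $T_{n,i_j}\to T_n$ for every $n$; consistency passes to the limit, so Lemma~\ref{tabcon} yields a Borel measure $\mu$ on $\mathcal{C}\times\mathcal{C}$. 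Choosing the partitions coherently on rows and columns (which coincide since $S_i$ is symmetric) makes each $T_n$ symmetric, and hence $\mu$ symmetric.

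For the inclusion $C(\mu,k)\subseteq K_k$, by the preceding lemma it suffices to treat $X=\mathcal{M}(f_1,\dots,f_k)$ where each $f_j$ is a step function on level-$n$ dyadic cells for some $n$. Then $\int f_j\otimes f_\ell\,d\mu$ is a finite linear combination of the entries of $T_n$, and by construction it is the limit of the same combination applied to $T_{n,i_j}$. Pulling the values of $f_j$ back through $\mathcal{P}_{n,i_j}$ gives rows of a matrix $M\in\mathcal{K}(k,n_{i_j})$, with row and column sum conditions off by at most $O(2^n/n_{i_j})$, an error absorbed on normalisation. The resulting $MS_{i_j}M^T\in C(S_{i_j},k)$ then converges to $X$, placing $X$ in $K_k$.

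For the reverse inclusion $K_k\subseteq C(\mu,k)$, let $Y\in K_k$ and pick $Y_{i_j}=M_{i_j}S_{i_j}M_{i_j}^T\to Y$ with $M_{i_j}\in\mathcal{K}(k,n_{i_j})$. I lift each row of $M_{i_j}$ to a step function $h_{j,i_j}\colon\mathcal{C}\to[0,1]$ via the same dyadic labelling used to build $\mu$. By construction $\sum_j h_{j,i_j}\equiv1$ and $\int h_{j,i_j}\,d\nu=1/k$. Passing to a further subsequence and using weak-$*$ compactness in $L^\infty(\nu)$ produces limits $g_1,\dots,g_k$ with the same properties, so $\mathcal{M}(g_1,\dots,g_k)\in C_0(\mu,k)$. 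What remains is to show the identification $Y(j,\ell)=\int g_j(x)g_\ell(y)\,d\mu(x,y)$, after which taking closures gives $Y\in C(\mu,k)$.

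The hard part will be precisely this identification: $Y_{i_j}(j,\ell)$ is an integral of the tensor product $h_{j,i_j}\otimes h_{\ell,i_j}$ against the discrete measure encoding $S_{i_j}$, and weak-$*$ convergence in $L^\infty(\nu)$ does not transfer directly through a varying measure. To handle this I plan a two-parameter limiting procedure: replace $h_{j,i_j}$ by its conditional expectation $h_{j,i_j}^{(n)}$ onto the level-$n$ dyadic $\sigma$-algebra, so that the integral becomes the finite sum $\sum_{a,b}h_{j,i_j}^{(n)}(a)\,h_{\ell,i_j}^{(n)}(b)\,T_{n,i_j}(a,b)$. Letting $i_j\to\infty$ at fixed $n$ uses only the entrywise convergence $T_{n,i_j}\to T_n$ and convergence of the finitely many coefficients $h_{j,i_j}^{(n)}(a)$ (obtained by yet another extraction); letting $n\to\infty$ then uses a martingale-type argument together with the uniform bound $\|h_{j,i_j}\|_\infty\leq 1$ to control the conditional-expectation error uniformly in $i_j$. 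Combining the two limits yields $Y(j,\ell)=\int g_j\otimes g_\ell\,d\mu$ and completes the argument.
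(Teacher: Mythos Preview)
Your construction of $\mu$ has a fundamental gap that occurs \emph{before} the step you flagged as hard. You build the consistent tower $\{T_n\}$ from an \emph{arbitrary} nested chain of balanced dyadic partitions of each $S_i$. Nothing forces this chain to ``see'' the whole limit shape: the resulting $\mu$ can satisfy $C(\mu,k)\subsetneq K_k$. Concretely, let $S_i$ be the (normalised) adjacency matrix of a disjoint union of two cliques $K_n\sqcup K_n$ on $2n$ vertices. If at every level you choose the interleaving partition (odd-indexed versus even-indexed vertices, then refine similarly), each $T_{m,i}$ is the uniform $2^m\times 2^m$ matrix with all entries $4^{-m}$, and the limiting $\mu$ is the uniform measure on $\mathcal{C}^2$. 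Then $C(\mu,2)$ is the single point $\tfrac14\mathbf{1}_2$, whereas the block-diagonal matrix $\mathrm{diag}(\tfrac12,\tfrac12)$ lies in $K_2$ (witnessed by the partition separating the two cliques). So your $\mu$ is simply wrong for this sequence.

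This is exactly why the paper's proof is built around the regularity lemma (Proposition~\ref{regularity}) and its limit form (Lemma~\ref{limreg}): one first finds, for each scale $2^i$, a \emph{single} matrix $X_i$ in the limit shape whose own shape $C(X_i,2^i)$ already approximates all of $C(2^i)$, and only then uses the balancing lemma (Corollary~\ref{limbal}) to thread these together into a consistent tower. Without this ``whole-shape-in-one-matrix'' step there is no mechanism ensuring $K_k\subseteq C(\mu,k)$.

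Your flagged identification step is also genuinely problematic, though secondary. The martingale control you invoke bounds $\|h_{j,i_j}-h_{j,i_j}^{(n)}\|_{L^2(\nu)}$, but you are integrating against the discrete measures $\sigma_{i_j}$ encoding $S_{i_j}$, which need not be uniformly absolutely continuous with respect to $\nu\times\nu$; the error is therefore not uniform in $i_j$. Worse, the weak-$*$ limits $g_j$ are only $\nu$-equivalence classes, while $C_0(\mu,k)$ is defined via genuine Borel functions; when $\mu$ is singular with respect to $\nu\times\nu$, the integral $\int g_j\otimes g_\ell\,d\mu$ is not even well defined from the equivalence class alone.
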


As a corollary (For a detailed proof see Chapter \ref{ch:cantor}) we get the following statement.

\begin{corollary}\label{mainCor}
If $\{S_i\}_{i=1}^\infty$ is a convergent sequence of non-negative symmetric matrices, then there
exists a symmetric Borel measure $\mu$ on $[0,1]^2$
such that for all natural number $k$, the limit shape of $C(S_i,k)$ is $C(\mu,k)$.
\end{corollary}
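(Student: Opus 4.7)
The plan is to invoke Theorem~\ref{thm:shapeMeasure} to obtain a symmetric Borel measure $\mu_0$ on $\mathcal{C}\times\mathcal{C}$ representing the limit on the Cantor square, and then to transport $\mu_0$ to a symmetric Borel measure $\mu$ on $[0,1]^2$ via a measure-preserving Borel isomorphism. Concretely, I would produce a Borel bijection $\tau:\mathcal{C}\to[0,1]$ with Borel inverse and with $\tau_{*}\nu=\mathrm{Leb}$, and then set $\mu:=(\tau\times\tau)_{*}\mu_0$.

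The starting point for $\tau$ is the binary expansion map $\phi(b_1,b_2,\ldots):=\sum_{n\geq 1}b_n 2^{-n}$, which is continuous, surjective, and satisfies $\phi_{*}\nu=\mathrm{Leb}$, but is $2$-to-$1$ on the countable Borel set $E\subset\mathcal{C}$ of eventually-constant sequences, whose image is the countable set $D\subset[0,1]$ of dyadic rationals in $(0,1)$. Since $E$ and $D$ are both countably infinite, any bijection $\alpha:E\to D$ is automatically Borel, and the function $\tau$ which equals $\phi$ on $\mathcal{C}\setminus E$ and $\alpha$ on $E$ is a Borel bijection $\mathcal{C}\to[0,1]$. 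A standard result from descriptive set theory---that a Borel bijection between standard Borel spaces is a Borel isomorphism---gives that $\tau^{-1}$ is Borel as well, and since the modification is supported on the $\nu$-null set $E$ we still have $\tau_{*}\nu=\mathrm{Leb}$.

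With $\tau$ in hand, verifying $C(\mu,k)=C(\mu_0,k)$ for every $k$ reduces to the pre-closure identity $C_0(\mu,k)=C_0(\mu_0,k)$. The maps $f_i\mapsto f_i\circ\tau$ and $g_i\mapsto g_i\circ\tau^{-1}$ are mutually inverse bijections between admissible $k$-tuples (non-negative measurable functions, summing pointwise to $1$, each integrating to $1/k$) on $([0,1],\mathrm{Leb})$ and on $(\mathcal{C},\nu)$: the pointwise constraints transfer trivially, and the integral constraints match by $\tau_{*}\nu=\mathrm{Leb}$. The associated shape matrices agree by the pushforward change-of-variables formula $\int f_if_j\,d\mu=\int(f_i\circ\tau)(f_j\circ\tau)\,d\mu_0$. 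Symmetry of $\mu$ is immediate from symmetry of $\mu_0$ together with the commutation of $\tau\times\tau$ with coordinate swap; combining this with Theorem~\ref{thm:shapeMeasure} yields $C(S_i,k)\to C(\mu,k)$ in Hausdorff distance for every $k$.

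The main subtlety---and the reason one cannot simply take $\mu:=(\phi\times\phi)_{*}\mu_0$---is that $\mu_0$ may charge pairs in $\mathcal{C}^2$ whose coordinates lie in the non-trivial fibres of $\phi$; in that case the pushforward under $\phi\times\phi$ can identify distinct atoms of $\mu_0$ that are genuinely separated in the Cantor picture, which strictly shrinks the shape set (a two-atom measure supported on a $\phi$-fibre already illustrates this at $k=2$). Replacing $\phi$ by a genuine Borel bijection $\tau$ preserves the full atomic structure of $\mu_0$, and this is precisely what forces $C_0(\mu,k)$ to equal, rather than merely be contained in, $C_0(\mu_0,k)$.
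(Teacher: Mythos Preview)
Your proof is correct and follows the same route as the paper: invoke Theorem~\ref{thm:shapeMeasure} on $\mathcal{C}^2$ and push forward along a measure-preserving Borel bijection $\mathcal{C}\to[0,1]$---the paper's Lemma~\ref{le:cantor-lebesgue} builds this via the ternary Cantor embedding composed with the Cantor function, while you use the binary-expansion map, each repairing injectivity on a countable null set. One cosmetic slip: $\phi(E)$ also contains $0$ and $1$ (the images of the all-$0$ and all-$1$ sequences), so take $D$ to be the dyadic rationals in $[0,1]$ rather than in $(0,1)$ for $\tau$ to be a genuine bijection onto $[0,1]$; this changes nothing in the argument.
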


For a finite graph $G$ (with non empty edge set) we define $C_0(G,k)$ to be
$\|A_G\|_1^{-1}C(A_G,k)$ where $A_G$ is the adjacency matrix of
$G$. Note that if $G$ has no loop edges than $\|A_G\|_1=2|E(G)|$.

\begin{definition}
We say that the sequence $\{G_i\}_{i=1}^\infty$ of graphs is s-convergent if for every fixed natural number $k$, the shapes
$C_0(G_i,k)$ are converging in the Hausdorff metric. 
\end{definition}

It is clear that the above convergence notion puts all non-empty graphs into a compact space since every sequence has an s-convergent sub-sequence. The following theorem is an immediate consequence of Theorem~\ref{thm:shapeMeasure}.

\begin{theorem}\label{thm:shpm2}
If $\{G_i\}_{i=1}^\infty$ is an s-convergent sequence of graphs, then there is a
Borel probability measure $\mu$ on $[0,1]^2$ such that the limit shape of
$C_0(S,k)$ is $C(\mu,k)$ for all natural number $k$. 
\end{theorem}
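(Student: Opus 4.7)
The plan is to reduce the statement to Corollary~\ref{mainCor} by a straightforward renormalization. Set $S_i := \|A_{G_i}\|_1^{-1} A_{G_i}$, the edge-normalized adjacency matrices of the graphs $G_i$. These matrices are symmetric, nonnegative, and satisfy $\gamma(S_i) = 1$. Because $M(cS)M^T = c\,MSM^T$, the shape operation scales linearly in $S$, so for every $k$,
\[
C(S_i, k) \;=\; \|A_{G_i}\|_1^{-1}\,C(A_{G_i}, k) \;=\; C_0(G_i, k).
\]
Consequently, the assumption that $\{G_i\}$ is s-convergent is exactly the statement that $\{S_i\}$ is a convergent sequence of symmetric nonnegative matrices in the sense of Definition~\ref{def:matconv} (with any $c > 1$).

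Applying Corollary~\ref{mainCor} to $\{S_i\}$ produces a symmetric Borel measure $\mu$ on $[0,1]^2$ such that for every natural number $k$, the set $C(\mu,k)$ is the Hausdorff limit of $C(S_i,k) = C_0(G_i,k)$. All that remains is to verify that $\mu$ is a probability measure. For this I would inspect the $k=1$ shapes: the set $\mathcal{K}(1,n)$ consists solely of the all-ones row vector, so $C(S_i,1) = \{\gamma(S_i)\} = \{1\}$ for every $i$. On the measure side, the only function $f_1$ on $[0,1]$ compatible with the $k=1$ constraints $\sum_{i=1}^{k} f_i \equiv 1$ and $\int f_i\,d\nu = 1/k$ is the constant $f_1 \equiv 1$, yielding $C(\mu,1) = \{\mu([0,1]^2)\}$. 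Equality of these limits forces $\mu([0,1]^2) = 1$, so $\mu$ is indeed a Borel probability measure.

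Since this argument is essentially a bookkeeping step on top of Theorem~\ref{thm:shapeMeasure} and Corollary~\ref{mainCor}, there is no genuine obstacle to overcome. The only minor point to be careful about is to keep the external normalization $\|A_{G_i}\|_1^{-1}$ built into the definition of $C_0(G_i,k)$ separate from the row- and column-sum conditions built into $\mathcal{K}(k,n)$; absorbing the former into the matrices $S_i$ puts the hypothesis into the exact form required by Definition~\ref{def:matconv}, and the probability normalization of the limit measure then follows automatically from the trivial $k=1$ comparison.
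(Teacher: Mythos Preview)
Your proof is correct and follows the same route as the paper, which simply states that the theorem is an immediate consequence of Theorem~\ref{thm:shapeMeasure} (equivalently, Corollary~\ref{mainCor}). You have actually supplied more detail than the paper does: the explicit verification via the $k=1$ shape that the limiting measure has total mass $1$ is a nice touch that the paper leaves implicit.
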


Motivated by Theorem \ref{thm:shpm2} we introduce the following notion.

\begin{definition} An s-graphon is a  symmetric Borel probability measure on $[0,1]^2$.
\end{definition}

We will prove that every s-graphon is a limit of some graph sequence.

\begin{theorem}\label{thm:shpm2b} Let $\mu$ be an s-graphon. then there is a graph sequence $\{G_i\}_{i=1}^\infty$ such that it is s-convergent and its limit is $\mu$.
\end{theorem}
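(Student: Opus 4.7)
The plan is to approximate $\mu$ in two stages: first by a consistent system of dyadic weighted matrices, and then to realise those matrices by random simple graphs. Using the equivalence between measures on $[0,1]^2$ and on $\mathcal{C}\times\mathcal{C}$ proved in Chapter~\ref{ch:cantor}, I will regard $\mu$ as a symmetric Borel probability measure on the Cantor set, and let $T_n$ be the $2^n\times 2^n$ symmetric nonnegative matrix defined by
\[
T_n(x,y):=\mu(S_x\times S_y)\qquad (x,y\text{ at level }n\text{ of }B).
\]
The system $\{T_n\}$ is consistent in the sense of Lemma~\ref{tabcon}, and $\gamma(T_n)=1$ for every $n$.

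The first step will show that $C(T_n,k)\to C(\mu,k)$ in the Hausdorff metric for every fixed $k$. The inclusion $C(T_n,k)\subseteq C(\mu,k)$ is straightforward: each $M\in\mathcal{K}(k,2^n)$ defines functions $f_i$ on $\mathcal{C}$ equal to $M_{i,x}$ on $S_x$; the column- and row-sum conditions give $\sum_i f_i\equiv 1$ and $\int f_i\,d\nu=1/k$, and expanding $MT_nM^T$ identifies it with $\mathcal{M}(f_1,\dots,f_k)$. For the reverse direction it suffices, by the preceding lemma, to approximate points of $C_c(\mu,k)$. Given continuous witnesses $f_i$, the level-$n$ conditional expectations
\[
\bar f_i^{(n)}(z):=\nu(S_x)^{-1}\int_{S_x}f_i\,d\nu\qquad (z\in S_x)
\]
are piecewise constant at level $n$, still satisfy the partition-of-unity and integral constraints, and converge uniformly to $f_i$ by uniform continuity on the compact Cantor set; bounded convergence then yields $\mathcal{M}(\bar f^{(n)})\to\mathcal{M}(f)$ with $\mathcal{M}(\bar f^{(n)})\in C(T_n,k)$.

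The second step will realise each $T_n$ by a simple graph. Fix $n$ and a large integer $N$, and form the random graph $G_n^{(N)}$ on $V=[2^n]\times[N]$, partitioned into blocks $B_i=\{i\}\times[N]$, by including each edge $\{u,v\}$ independently with probability $T_n(b(u),b(v))\in[0,1]$. For any $\tilde M\in\mathcal{K}(k,2^nN)$, write $M_{a,i}:=N^{-1}\sum_l\tilde M_{a,(i,l)}\in\mathcal{K}(k,2^n)$; a direct calculation gives
\[
\mathbb{E}\bigl[\tilde M A_{G_n^{(N)}}\tilde M^T\bigr]=N^2\,MT_nM^T+O_n(N)
\]
and $\mathbb{E}\bigl[\|A_{G_n^{(N)}}\|_1\bigr]=N^2+O_n(N)$, so after normalising by $\|A\|_1$ the expected shape matches a point of $C(T_n,k)$ up to an error that vanishes as $N\to\infty$. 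Individual-entry fluctuations are controlled by Hoeffding at order $N$, and a $\delta$-net argument on the compact polytope $\mathcal{K}(k,2^nN)$ together with Lipschitz continuity of $\tilde M\mapsto \tilde M A\tilde M^T/\|A\|_1$ makes the concentration uniform. Thus for $N=N_n$ large enough, a deterministic graph $G_n:=G_n^{(N_n)}$ exists with $\mathrm{dist}(C_0(G_n,k),C(T_n,k))<1/n$ simultaneously for all $k\leq n$.

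Combining the two steps, for each fixed $k$ the sets $C_0(G_n,k)$ converge in Hausdorff metric to $C(\mu,k)$, so $\{G_n\}$ is s-convergent with limit $\mu$. The main technical obstacle will be the uniform concentration in step two: a single Hoeffding bound controls only one partition at a time, whereas the Hausdorff comparison of the whole set-valued shape demands simultaneous control over the parameter space $\mathcal{K}(k,2^nN)$ whose dimension grows with $N$; the $\delta$-net must be fine enough not to spoil the leading $N^2$ term yet coarse enough to permit a union bound, which forces $N_n$ to grow sufficiently fast.
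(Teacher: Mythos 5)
Your proposal is correct, but it replaces the paper's key reduction with a self-contained construction, so the two are worth comparing. Your first stage (the consistent tables $T_n$ and the convergence $C(T_n,k)\to C(\mu,k)$) is essentially Lemma~\ref{ketthat} together with the monotonicity $C(T_n,k)\subseteq C(T_{n+1},k)$ coming from Corollary~\ref{tart}; your conditional-expectation argument is a clean way to get the hard inclusion and matches the paper's use of uniform continuity on $\mc{C}$. The genuine divergence is in the second stage. The paper does not build graphs by hand: it represents each $T_n$ by a bounded step-function graphon $W_n$, invokes the inverse theorem of dense graph limit theory \cite{LSz} to produce a dense graph sequence converging to $W_n/\|W_n\|_\infty$ in cut distance, and observes that dense convergence to a graphon implies s-convergence to the associated probability measure. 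You instead realize $T_n$ directly by an $N$-fold random blow-up with edge probabilities $T_n(b(u),b(v))$ and prove the shape approximation by concentration. What your route buys is self-containedness and an explicit model; what it costs is exactly the step you flag as the main obstacle, the uniform control over all of $\mc{K}(k,2^nN)$. Your $\delta$-net on that polytope does work (the net has $e^{O_n(N)}$ points against a Hoeffding tail of $e^{-c_n\varepsilon^2N^2}$, and the Lipschitz constant of $\tilde M\mapsto\tilde M A\tilde M^T$ in $\ell_1$ is $O_n(N)$), but the standard and cleaner device is to bound the unnormalized cut norm $\max_{S,T}|\sum_{u\in S,v\in T}(A_{uv}-\mb{E}A_{uv})|$ by $o(N^2)$ via a union bound over vertex subsets, and then note that every quotient entry is a bilinear form with $[0,1]$ coefficients and hence controlled by the cut norm; this is precisely the content of the sampling lemma hidden inside the citation of \cite{LSz}. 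You should also state explicitly that both directions of the Hausdorff distance are handled in stage two: the containment of $C_0(G_n,k)$ in a neighbourhood of $C(T_n,k)$ needs the uniform concentration, while the reverse containment only needs the lift $\tilde M_{a,(i,l)}:=M_{a,i}$ of each $M\in\mc{K}(k,2^n)$ and pointwise concentration. With those two points made precise, your argument is a complete and somewhat more informative alternative to the paper's proof.
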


\begin{corollary}\label{cor:shpm2c} Let $\mu$ be an arbitrary symmetric Borel measure on $[0,1]^2$ (or on $\mathcal{C}^2$). Then there is a sequence of non-negative symmetric matrices $\{S_i\}_{i=1}^\infty$ with $\gamma(S_i)=\mu([0,1]^2)$ such that the limit of $\{S_i\}_{i=1}^\infty$ is $\mu$.
\end{corollary}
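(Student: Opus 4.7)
The plan is to reduce the corollary to Theorem~\ref{thm:shpm2b} by a simple rescaling argument, after dispensing with the trivial degenerate case. Set $c := \mu([0,1]^2)$. If $c = 0$, then $\mu$ is the zero measure, and we may take $S_i$ to be the $1\times 1$ zero matrix for every $i$; each shape $C(S_i,k)$ is the singleton containing the all-zero matrix, which coincides with $C(\mu,k)$.

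Assume now $c > 0$. The measure $\tilde{\mu} := c^{-1}\mu$ is a symmetric Borel probability measure on $[0,1]^2$, i.e.\ an s-graphon in the sense of the preceding definition. By Theorem~\ref{thm:shpm2b} there is an s-convergent graph sequence $\{G_i\}_{i=1}^\infty$ whose limit is $\tilde{\mu}$; that is, $C_0(G_i,k)\to C(\tilde{\mu},k)$ in Hausdorff metric for every $k$. I would then define
\[
S_i := c\,\|A_{G_i}\|_1^{-1}\,A_{G_i},
\]
which is a non-negative symmetric matrix with $\gamma(S_i) = c$. Since the shape operator $C(\cdot,k)$ is homogeneous in the underlying matrix (the linear map $S\mapsto MSM^T$ is homogeneous of degree $1$), we have $C(S_i,k) = c\cdot C_0(G_i,k)$, and scalar multiplication is continuous in the Hausdorff metric induced by the $\ell_1$-norm on $\mathbb{R}^{k\times k}$. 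Hence $C(S_i,k)\to c\cdot C(\tilde{\mu},k)$.

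The one small thing that needs to be checked explicitly is the identity $c\cdot C(\tilde{\mu},k) = C(\mu,k)$. This is immediate from the definition of $C_0(\mu,k)$: for any admissible family $f_1,\dots,f_k$ (nonnegative, summing to $1$, each of integral $1/k$),
\[
\mathcal{M}_\mu(f_1,\dots,f_k)_{ij} = \int f_i(x)f_j(y)\,d\mu = c\int f_i(x)f_j(y)\,d\tilde{\mu} = c\cdot \mathcal{M}_{\tilde{\mu}}(f_1,\dots,f_k)_{ij},
\]
so $C_0(\mu,k) = c\cdot C_0(\tilde{\mu},k)$, and taking closures yields $C(\mu,k) = c\cdot C(\tilde{\mu},k)$. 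Combining with the previous paragraph gives $C(S_i,k)\to C(\mu,k)$ for each $k$, which is the required conclusion. The statement for $\mu$ defined on $\mathcal{C}^2$ rather than $[0,1]^2$ follows by the same argument together with the equivalence between the two settings established in Chapter~\ref{ch:cantor}. There is no serious obstacle here: the substance of the corollary is entirely contained in Theorem~\ref{thm:shpm2b}, with the present statement being its homogeneous extension to measures of arbitrary total mass.
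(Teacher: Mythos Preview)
Your proof is correct and follows essentially the same rescaling approach as the paper: normalize $\mu$ to a probability measure, invoke Theorem~\ref{thm:shpm2b}, and scale the resulting matrices back by the total mass. You are actually a bit more careful than the paper, explicitly handling the degenerate case $c=0$ and spelling out the homogeneity identities $C(S_i,k)=c\cdot C_0(G_i,k)$ and $C(\mu,k)=c\cdot C(\tilde\mu,k)$ that the paper leaves implicit.
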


\begin{proof} Let $\mu'=\mu/\mu([0,1]^2)$. We have by theorem \ref{thm:shpm2b} that $\mu'$ is the limit of the matrices $A_{G_i}/\|A_{G_i}\|_1$ for some graph sequence $\{G_i\}_{i=1}^\infty$. It follows that $\mu$ is the limit of the matrices $A_{G_i}\mu([0,1]^2)/\|A_{G_i}\|_1$.
\end{proof}

\section{Some lemmas}

\begin{lemma}\label{folytrep}
Let $S$ be a nonnegative $n \times n$ matrix. There is a measure
$\mu$ on $\mathcal{C}^2$ such that $C(\mu,k)=C(S,k)$ for every
natural number $k$.
\end{lemma}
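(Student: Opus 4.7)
The plan is to realise $S$ as the ``density matrix'' of a measure $\mu$ obtained by spreading mass uniformly within an $n\times n$ block decomposition of $\mathcal{C}^2$. First I would partition $\mathcal{C}$ into $n$ disjoint Borel sets $A_1,\ldots,A_n$ of $\nu$-measure $1/n$ each (e.g.\ by grouping dyadic cylinders at a sufficiently deep level), and then define
\[
\mu:=\sum_{a,b=1}^n S_{a,b}\cdot n^2\cdot (\nu|_{A_a}\times\nu|_{A_b}),
\]
so that $\mu(A_a\times A_b)=S_{a,b}$ and $\mu$ restricted to any block $A_a\times A_b$ is a constant multiple of $\nu\times\nu$.

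The heart of the proof would be an explicit correspondence between witnesses for $C_0(\mu,k)$ and elements of $\mathcal{K}(k,n)$. Given nonnegative measurable $f_1,\ldots,f_k$ on $\mathcal{C}$ with $\sum_i f_i\equiv 1$ and $\int f_i\,d\nu=1/k$, I would set $M_{i,a}:=n\int_{A_a}f_i\,d\nu$. The condition $\sum_i f_i=1$ forces the column sums $\sum_i M_{i,a}=n\nu(A_a)=1$, while $\int f_i\,d\nu=1/k$ forces the row sums to be $n/k$, so indeed $M\in\mathcal{K}(k,n)$. Expanding the defining integral block by block and using the explicit form of $\mu$ yields
\[
\mathcal{M}(f_1,\ldots,f_k)_{i,j}=\sum_{a,b}S_{a,b}\cdot n^2\int_{A_a}f_i\,d\nu\int_{A_b}f_j\,d\nu=(MSM^T)_{i,j}.
\]
Conversely, given any $M\in\mathcal{K}(k,n)$, I would split each $A_a$ into Borel pieces $A_{a,1},\ldots,A_{a,k}$ with $\nu(A_{a,i})=M_{i,a}/n$ (which is possible since $\sum_i M_{i,a}/n=1/n=\nu(A_a)$) and take $f_i:=\sum_a\chi_{A_{a,i}}$; the same block-by-block computation then recovers $MSM^T$. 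This pair of constructions shows $C_0(\mu,k)=C(S,k)$ as sets of matrices.

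To conclude I would invoke the fact that $\mathcal{K}(k,n)$ is compact and the map $M\mapsto MSM^T$ is continuous, so $C(S,k)$ is itself compact, hence closed in $\mathbb{R}^{k\times k}$. Taking closures then gives $C(\mu,k)=\overline{C_0(\mu,k)}=C(S,k)$, completing the proof. I do not expect any real obstacle here: the whole argument hinges on choosing the block-uniform measure $\mu$ so that the integral defining $\mathcal{M}(f_1,\ldots,f_k)$ collapses exactly to the matrix product $MSM^T$. The only minor technical point is the existence of Borel subpartitions of $\mathcal{C}$ with prescribed measures, but since $(\mathcal{C},\nu)$ is an atomless standard probability space this is routine.
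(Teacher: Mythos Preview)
Your proposal is correct and follows exactly the same construction as the paper: define $\mu$ to have constant density $n^2 S_{a,b}$ on each block $A_a\times A_b$ of an equipartition of $\mathcal{C}$. The paper simply asserts that ``it is easy to see that $C(\mu,k)=C(S,k)$'', whereas you have spelled out the bijection between witnesses $(f_1,\ldots,f_k)$ and matrices $M\in\mathcal{K}(k,n)$ and noted the closedness of $C(S,k)$, so your argument is just a fully detailed version of theirs. (A tiny simplification for the converse direction: instead of subpartitioning each $A_a$, you could take $f_i:=\sum_a M_{i,a}\chi_{A_a}$ directly, avoiding the atomlessness step.)
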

\begin{proof}
First we represent $S$ by a measurable function $g$ on
$\mathcal{C}^2$ in the following way. We partition $\mathcal{C}$
into $n$ subsets $\mathcal{P}_1,\mathcal{P}_2,\dots,\mathcal{P}_n$
each of measure $1/n$. For a point $x\in
\mathcal{P}_i\times\mathcal{P}_j$, we define $g(x)$ to be
$n^2S_{i,j}$. Now the function $g$ defines a measure $\mu$ on
$\mathcal{C}^2$ by
$$\mu(H)=\int_{H}g~d(\nu\times\nu).$$ It is easy to see that
$C(\mu,k)=C(S,k)$ for every natural number $k$.
\end{proof}

\begin{lemma}\label{resze}
Let $\mu$ be a measure on $\mathcal{C}^2$ and let $X$ be an element
of $C_0(\mu,k)$ for some natural number $k$. Then $C(X,r)\subseteq
C(\mu,r)$ for every natural number $r$.
\end{lemma}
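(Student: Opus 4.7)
The plan is to show the stronger statement $C(X,r)\subseteq C_0(\mu,r)$, which immediately implies the desired inclusion since $C_0(\mu,r)\subseteq C(\mu,r)$. The strategy is to produce explicit witness functions for an arbitrary element of $C(X,r)$, built by linearly combining the witnesses for $X$ according to the partition matrix realizing the chosen element.

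More precisely, since $X\in C_0(\mu,k)$, I would fix witness functions $f_1,f_2,\ldots,f_k$ on $\mathcal{C}$: nonnegative, measurable, with $\sum_{j=1}^k f_j\equiv 1$, $\int_\mathcal{C} f_j\,d\nu=1/k$, and $X_{j,j'}=\int_{\mathcal{C}^2} f_j(x)f_{j'}(y)\,d\mu$. An arbitrary element $Y\in C(X,r)$ has the form $Y=MXM^T$ for some $M\in\mathcal{K}(r,k)$, that is, a nonnegative $r\times k$ matrix with row sums $k/r$ and column sums $1$. Then I would define
\[
h_i(x):=\sum_{j=1}^k M_{ij}f_j(x),\qquad i=1,2,\ldots,r.
\]

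Next I would verify that $(h_1,\ldots,h_r)$ is a valid witness family for $C_0(\mu,r)$. Nonnegativity and measurability are inherited from the $f_j$ and $M_{ij}\geq 0$. The identity $\sum_{i=1}^r h_i\equiv 1$ follows from $\sum_{i=1}^r M_{ij}=1$ (column sums) together with $\sum_j f_j\equiv 1$. The integral condition uses the row-sum property:
\[
\int_\mathcal{C} h_i\,d\nu=\sum_{j=1}^k M_{ij}\int_\mathcal{C} f_j\,d\nu=\sum_{j=1}^k\frac{M_{ij}}{k}=\frac{k/r}{k}=\frac{1}{r}.
\]
Finally, expanding the product and using bilinearity,
\[
\mathcal{M}(h_1,\ldots,h_r)_{i,i'}=\int_{\mathcal{C}^2}\!\!h_i(x)h_{i'}(y)\,d\mu=\sum_{j,j'=1}^k M_{ij}M_{i'j'}X_{j,j'}=(MXM^T)_{i,i'}=Y_{i,i'},
\]
so $Y\in C_0(\mu,r)$, as required.

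There is essentially no obstacle beyond guessing the right combination $h_i=\sum_j M_{ij}f_j$; once that is written down, each of the three defining conditions for membership in $C_0(\mu,r)$ translates exactly into one of the three defining properties of $M\in\mathcal{K}(r,k)$ (column sums for the partition of unity, row sums for the integral normalization, nonnegativity for nonnegativity), and the identification of the matrix entry as $(MXM^T)_{i,i'}$ is immediate from Fubini applied to the finite sums.
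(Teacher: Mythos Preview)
Your proof is correct and uses the same key construction as the paper: the linear combination $h_i=\sum_j M_{ij}f_j$. Your version is in fact slightly cleaner than the paper's, which (somewhat unnecessarily, given the hypothesis $X\in C_0(\mu,k)$) passes through an approximating sequence $X_n\to X$ in $C_0(\mu,k)$ and concludes only $Y\in C(\mu,r)$ via a limit, whereas you obtain $Y\in C_0(\mu,r)$ directly.
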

\begin{proof}
Let $Y$ be an arbitrary element of $C(X,r)$. We know by definition
that $Y=MXM^T$ for some nonnegative real $r\times k$ matrix
$M\in\mathcal{K}(k,n)$. Let $X_1,X_2,\dots$ be a sequence of matrices in
$C_0(\mu,k)$ converging to $X$, and for each natural number $n$, let
$f_{n,1},f_{n,2},\dots,f_{n,k}$ be a systems of measurable functions
on $\mathcal{C}$ that is a witness for $X_n\in C_0(\mu,k)$. That is $X_n=\mathcal{M}(f_{n,1},f_{n,2},\dots,f_{n,k})$,  the sum $\sum_{i=1}^k f_{n,i}$ is
the constant function $1$ and  for all $1\leq i\leq k$,
$$\int_{\mathcal{C}} f_{n,i}~ d\nu=1/k.$$
Define
$$g_{n,i}=\sum_{j=1}^{k}M_{i,j}f_{n,j}$$ and let $G_n$ be an $r \times r$ matrix
with entries
$$G_n(i,j)=\int_{(x,y)\in\mathcal{C}^2}g_{n,i}(x)g_{n,j}(y)~d(\nu\times\nu).$$
Since $G_n=MX_nM^T$, we get that the sequence $G_n$ is
converging to $Y$. On the other hand, the definition of $G_n$ shows
that $G_n\in C_0(\mu,r)$. This proves that $Y\in C(\mu,r)$.
\end{proof}

This result has the following consequence for shapes pertaining to the same measure/matrix.

\begin{corollary}\label{tart}
Let $\mu$ be a measure on $\mathcal{C}^2$ of finite total measure.
Then for all natural numbers $k$ and $r$ we have that $C(C(\mu,k),r)\subseteq C(\mu,r)$. In particular we get that
$C(C(S,k),r)\subseteq C(S,r)$ for every nonnegative matrix $S$.
\end{corollary}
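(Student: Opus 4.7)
The plan is to reduce the measure statement to Lemma~\ref{resze} via a continuity-and-closure argument, and then derive the matrix case using the representation from Lemma~\ref{folytrep}.

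For the first inclusion I would start by picking an arbitrary $Y\in C(C(\mu,k),r)$. Unfolding the outer definition $C(\mathscr{S},r)=\bigcup_{X\in\mathscr{S}}C(X,r)$, there exist $X\in C(\mu,k)$ and a matrix $M\in\mathcal{K}(r,k)$ such that $Y=MXM^T$. Since $C(\mu,k)$ is by definition the topological closure of $C_0(\mu,k)$, I choose a sequence $\{X_n\}\subset C_0(\mu,k)$ with $X_n\to X$ in $\mathbb{R}^{k\times k}$. Set $Y_n:=MX_nM^T$; continuity of the map $Z\mapsto MZM^T$ gives $Y_n\to Y$. Each $X_n$ lies in $C_0(\mu,k)$, so Lemma~\ref{resze} yields $Y_n\in C(X_n,r)\subseteq C(\mu,r)$. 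Because $C(\mu,r)$ is closed by construction, the limit $Y$ lies in $C(\mu,r)$ as well.

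For the matrix assertion, I would invoke Lemma~\ref{folytrep} to pick a measure $\mu$ on $\mathcal{C}^2$ satisfying $C(\mu,j)=C(S,j)$ for every natural number $j$. Substituting this identity (for $j=k$ and $j=r$) into the inclusion just proved yields
\[
C(C(S,k),r)=C(C(\mu,k),r)\subseteq C(\mu,r)=C(S,r),
\]
as required.

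There is no substantive obstacle: the argument is essentially an unwinding of definitions, with the only non-trivial ingredients being continuity of conjugation by the fixed matrix $M$ (which lets the approximation $X_n\to X$ pass through) and the fact that the topological closure in the definition of $C(\mu,r)$ absorbs limits. Lemma~\ref{folytrep} then bridges the measure and matrix versions without further work.
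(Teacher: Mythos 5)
Your proof is correct and follows essentially the same route as the paper: the paper's one-line proof appeals to the continuity of $C(\cdot,r)$ to pass from $C_0(\mu,k)$ (where Lemma~\ref{resze} applies directly) to its closure $C(\mu,k)$, which is exactly the approximation-and-closedness argument you spell out, and the reduction of the matrix case via Lemma~\ref{folytrep} is the intended reading of ``in particular''.
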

\begin{proof}
The result follows from the continuity properties of $C(\cdot,r)$ on the space of $k\times k$ matrices.
\end{proof}

\begin{lemma}\label{ketthat}
Let $\{T_i\}_{i=1}^\infty$ be a sequence of nonnegative symmetric consistent tables
and let $\mu$ be the symmetric measure constructed as in Lemma~\ref{tabcon}. Then for every natural number $k$, the set $C(\mu,k)$ is the topological closure of
$$\bigcup_{n\in\mathbb{N}}C(T_n,k).$$
\end{lemma}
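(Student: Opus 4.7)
The plan is to prove the two inclusions $\bigcup_n C(T_n,k)\subseteq C(\mu,k)$ and $C(\mu,k)\subseteq\overline{\bigcup_n C(T_n,k)}$ separately. The driving observation is that the measure $\mu$ from Lemma~\ref{tabcon} was built precisely so that $\mu(S_v\times S_w)=T_n(v,w)$ for $v,w$ at level $n$; integrals of level-$n$ step functions against $\mu$ therefore rewrite as matrix expressions involving $T_n$.

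For $C(T_n,k)\subseteq C(\mu,k)$, fix $n$ and $M\in\mathcal{K}(k,2^n)$, and define $f_i:\mathcal{C}\to\mathbb{R}_{\geq 0}$ to be the step function taking value $M_{i,v}$ on $S_v$ for each level-$n$ node $v$. Column sums of $M$ equal to $1$ give $\sum_i f_i\equiv 1$, while row sums equal to $2^n/k$ combined with $\nu(S_v)=2^{-n}$ give $\int f_i\,d\nu=1/k$, so the $f_i$ form an admissible witness family. A direct computation yields $\mathcal{M}(f_1,\ldots,f_k)(i,j)=\sum_{v,w}M_{i,v}M_{j,w}\mu(S_v\times S_w)=(MT_nM^T)_{i,j}$, so $MT_nM^T\in C_0(\mu,k)\subseteq C(\mu,k)$. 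Since $C(\mu,k)$ is closed, this already gives $\overline{\bigcup_n C(T_n,k)}\subseteq C(\mu,k)$.

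For the reverse inclusion, by the previous lemma $C(\mu,k)=\overline{C_0(\mu,k)}\subseteq\overline{C_c(\mu,k)}$, so it is enough to approximate each $X=\mathcal{M}(f_1,\ldots,f_k)\in C_c(\mu,k)$ by elements of $\bigcup_n C(T_n,k)$. Let $f_i^{(n)}$ be the step function on $\mathcal{C}$ taking the constant value $2^n\int_{S_v}f_i\,d\nu$ on each level-$n$ basic set $S_v$. Then the $f_i^{(n)}$ are again nonnegative with $\sum_i f_i^{(n)}\equiv 1$ and $\int f_i^{(n)}\,d\nu=1/k$, and the $k\times 2^n$ matrix $M^{(n)}$ with $M^{(n)}_{i,v}:=f_i^{(n)}\vert_{S_v}$ lies in $\mathcal{K}(k,2^n)$. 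The computation from the first part gives $\mathcal{M}(f_1^{(n)},\ldots,f_k^{(n)})=M^{(n)}T_n(M^{(n)})^T\in C(T_n,k)$. Because $\mathcal{C}$ is a compact metric space on which each $f_i$ is uniformly continuous and the diameters of the level-$n$ sets $S_v$ tend to $0$ as $n\to\infty$, the step approximation $f_i^{(n)}\to f_i$ is uniform on $\mathcal{C}$; hence $f_i^{(n)}(x)f_j^{(n)}(y)\to f_i(x)f_j(y)$ uniformly on $\mathcal{C}^2$, and since $\mu$ is finite the entries of $\mathcal{M}(f_1^{(n)},\ldots,f_k^{(n)})$ converge to those of $X$. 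Thus $X\in\overline{\bigcup_n C(T_n,k)}$, as required.

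The main obstacle is that $\mu$ need not be absolutely continuous with respect to $\nu\times\nu$, which prevents a direct appeal to dominated or $L^p$-convergence of $f_i^{(n)}\to f_i$ to pass the approximation through $\mu$. Routing through $C_c(\mu,k)$ via the earlier lemma sidesteps this: for continuous $f_i$ the step-function approximation is actually uniform on the compact space $\mathcal{C}$, which is strong enough to integrate against any finite Borel measure.
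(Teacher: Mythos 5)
Your proof is correct and follows essentially the same route as the paper's: explicit level-$n$ step-function witnesses give $C(T_n,k)\subseteq C_0(\mu,k)\subseteq C(\mu,k)$, and uniform approximation of continuous witnesses (routed through the inclusion $C_0(\mu,k)\subseteq\overline{C_c(\mu,k)}$ from the earlier lemma) gives the reverse inclusion. Your choice of conditional expectations $f_i^{(n)}$ is a slightly more careful instantiation of the paper's step approximants $g_i$, since it automatically preserves the constraints $\sum_i g_i\equiv 1$ and $\int g_i\,d\nu=1/k$, a point the paper leaves implicit.
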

\begin{proof}
For a node $x\in V(B)$ in the binary tree $B$, let $\chi_x$ be the
characteristic function of the set $S_x$. Let
$h:V(B)\rightarrow\mathbb{N}$ correspond to the
levels of the nodes of $B$. The system of functions
$$\{\chi_x~|~h(x)=n\}$$ show that $T_n\in C_0(\mu,2^n)$. Then
Lemma~\ref{resze} shows that $C(T_n,k)\subseteq C(\mu,k)$ for all
$n$ and $k$.

For the other direction, we shall prove that every element $X\in C_c(\mu,k)$ can be
approximated by elements from $C(T_n,k)$. Let $f_1,f_2,\dots,f_k$ be continuous
functions on $\mathcal{C}$ such that $\sum_{i=1}^{k}f_i=1$,
$\int_{\mathcal{C}}f_i~d\nu=1/k$ and
$X=\mathcal{M}(f_1,f_2,\ldots,f_k)$.
Since $\mathcal{C}$ is compact, these functions are actually uniformly continuous, and hence we can for a given $\varepsilon>0$ find an $n\in\mathbb{N}^+$ and functions $g_1,\ldots,g_k$ with values in $[0,1]$ such that each $g_i$ is constant on sets $S_x$ with $h(x)=n$, and
\[
\|f_i-g_i\|_\infty<\varepsilon.
\]
Let now $Y\in C(T_n,k)$ be generated by the family $(g_1,\ldots,g_k)$.
Then we clearly have 
\[
\|X-Y\|_\infty\leq (2\varepsilon+\varepsilon^2)\cdot\mu(\mathcal{C}\times\mathcal{C}).
\]

\end{proof}

\begin{lemma}\label{itav}
Let $c$ be a positive number and let $k$ and $t$ be natural numbers.
Then for any nonnegative $n\times n$ matrix $S$ with $\gamma(S)=c$, we have
$$\dist(C(C(S,t),k),C(S,k))\leq\frac{2ck}{t}$$
\end{lemma}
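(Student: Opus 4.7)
My plan is the following. Since Corollary~\ref{tart} already gives $C(C(S,t),k)\subseteq C(S,k)$, one side of the Hausdorff distance is zero and I only need to check that every $Y\in C(S,k)$ lies within $\ell_1$-distance $2ck/t$ of $C(C(S,t),k)$. Writing $Y=MSM^T$ with $M\in\mathcal{K}(k,n)$, I will seek the approximant in the form $Z=M'S(M')^T$ with $M'=N_1N_2$, $N_1\in\mathcal{K}(k,t)$, $N_2\in\mathcal{K}(t,n)$; then automatically $Z\in C(C(S,t),k)$ and $M'\in\mathcal{K}(k,n)$.

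The first step is a clean error identity. Setting $D:=M'-M$ and using that both $M$ and $M'$ have column sums identically equal to $1$, one has
\[
Z-Y \;=\; DSM^T+M'SD^T,
\]
in which no $DSD^T$-type term appears. Moving absolute values inside the double sums and using the column-sum-one property of $M$ and $M'$, this yields the control
\[
\|Z-Y\|_1 \;\le\; \sum_{v}\|D_{\cdot,v}\|_1\,(r_v+c_v),
\]
where $r_v:=\sum_w S_{v,w}$ and $c_v:=\sum_w S_{w,v}$ are the row and column sums of $S$, so $\sum_v r_v=\sum_v c_v=c$. The whole problem is thus to choose $N_1,N_2$ so that this weighted sum does not exceed $2ck/t$.

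For the construction I will use a stretched chunking. I lay out the mass $n$ on the interval $[0,n]$ so that cluster $i$ occupies $C_i:=[(i-1)n/k,\,in/k]$; inside $C_i$, I insert the subsegment $\mathrm{seg}(i,v)$ of length $M_{i,v}$ in an order that I will randomize. Chopping $[0,n]$ into $t$ chunks $P_1,\dots,P_t$ of length $n/t$, I set
\[
(N_1)_{i,p}:=(t/n)\,|P_p\cap C_i|, \qquad (N_2)_{p,v}:=\sum_i |P_p\cap\mathrm{seg}(i,v)|.
\]
An elementary row/column-sum check places $N_1$ in $\mathcal{K}(k,t)$ and $N_2$ in $\mathcal{K}(t,n)$. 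At most $k-1$ chunks straddle cluster boundaries; on any non-straddling chunk $M'$ coincides with $M$, so $D$ is supported only on vertices whose segments overlap one of the straddling chunks.

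To control the weighted sum in expectation, I randomize the order of segments inside each cluster independently and uniformly. For a straddling chunk $P_p$ separating $C_{i_p}$ from $C_{i_p+1}$, with $a_p:=|P_p\cap C_{i_p}|$ and $b_p:=n/t-a_p$, taking expectations of overlaps (each segment contributes to $P_p$ in proportion to the chunk's share of its cluster) leads to
\[
\mathbb{E}\Big[\sum_v \|D_{\cdot,v}\|_1\, r_v\Big]
\;\le\;
2\sum_{\text{straddling }p}\frac{a_pb_p\,kt}{n^2}\bigl(R_{i_p}+R_{i_p+1}\bigr),
\]
with $R_i:=\sum_v r_v M_{i,v}$ and $\sum_i R_i=c$. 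Using $a_pb_p\le(n/t)^2/4$ and the fact that each cluster is incident to at most two straddling chunks (so $\sum_p(R_{i_p}+R_{i_p+1})\le 2c$), the right-hand side is at most $ck/t$. The same bound holds with $c_v$ in place of $r_v$, giving $\mathbb{E}\|Z-Y\|_1\le 2ck/t$, and some realization of the random orderings then achieves the bound deterministically. The main technical difficulty is to make the random segment-placement argument rigorous in the fuzzy case (segments of variable length): either one passes to a continuous uniform placement model, or one first blows up each vertex into a large number of copies and reduces to the sharp case, in which the identity $\mathbb{E}|P_p\cap\mathrm{seg}(i,v)|=a_pM_{i,v}/(n/k)$ is transparent.
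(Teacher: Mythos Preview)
Your argument is correct but takes a substantially more elaborate route than the paper's. The paper's proof avoids randomization and the segment picture entirely: given $X=MSM^T$, it writes $t=mk+r$ with $0\le r<k$, builds $M_1\in\mathcal{K}(t,n)$ by scaling $M$ by $k/t$, repeating each row $m$ times, and appending $r$ uniform rows $\tfrac{1}{t}\vec{1}$; sets $Y=M_1SM_1^T\in C(S,t)$; and builds $M_2\in\mathcal{K}(k,t)$ that regroups the $m$ copies and spreads the $r$ uniform rows evenly. The single-line observation is that if $M_3$ denotes $M_1$ with the last $r$ rows zeroed out, then $M_3\le M_1$ entrywise and $M_2M_3=\tfrac{mk}{t}M$, so
\[
\frac{mk}{t}X \;=\; M_2M_3SM_3^TM_2^T \;\le\; M_2M_1SM_1^TM_2^T \;=\; Z
\]
entrywise. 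Since both sides have known entry sums, $\|Z-\tfrac{mk}{t}X\|_1=c(1-\tfrac{mk}{t})=\tfrac{cr}{t}$, and together with $\|X-\tfrac{mk}{t}X\|_1=\tfrac{cr}{t}$ one gets $\|Z-X\|_1\le\tfrac{2cr}{t}<\tfrac{2ck}{t}$. No probabilistic averaging, no fuzzy/sharp dichotomy, no case $t<k$ to worry about.

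Your approach, by contrast, keeps $M'=N_1N_2$ genuinely close to $M$ columnwise and extracts the bound from the clean identity $Z-Y=DSM^T+M'SD^T$ together with a first-moment argument over random segment placement. This is more work but gives finer information: the error is controlled vertex-by-vertex through $\sum_v\|D_{\cdot,v}\|_1(r_v+c_v)$, which could be useful if one ever needs to track how the approximant distorts specific rows or columns of $S$. Two small points to tighten: you should note that for $t<k$ the bound is trivial (both shapes sit in the simplex $\{M\ge0:\gamma(M)=c\}$ of $\ell_1$-diameter $2c\le 2ck/t$), since otherwise a single chunk may straddle several clusters; and of your two fixes for the expected-overlap identity, the circular placement (uniform random rotation of the segments on a circle of circumference $n/k$) is the cleaner one and makes $\mathbb{E}|P_p\cap\mathrm{seg}(i,v)|=|P_p\cap C_i|\cdot M_{i,v}k/n$ exact without any blow-up or rational approximation.
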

\begin{proof}
Let $X$ be an arbitrary element of $C(S,k)$ and let
$M\in\mathcal{K}(k,n)$ be a matrix with $MSM^T=X$. Let $mk+r=t$ for
some natural numbers $m$ and $r<k$. Now we construct a $t\times n$
matrix $M_1$ from $M$ with the following operations:

\noindent 1.) we multiply $M$ by $k/t$

\noindent 2.) we replace each row $v$ by an $m\times k$ matrix whose
rows are all equal to $v$.

\noindent 3.) we add $r$ copies of the everywhere $1/t$ row of
length $n$. (These are the last rows of the resulting matrix.)

The matrix $M_1$ is an element of $\mathcal{K}(k,n)$ and thus
$Y=M_1SM_1^T\in C(S,t)$.

Let $M_2$ be a $k\times t$ matrix described as follows. The $i$-th
row of $M_2$ has a block of $1$-s of length $m$ starting at the
$im+1$-st place, it has a block of $1/k$ of length $r$ starting at
the $mk+1$-st place and every other place is $0$. It is clear that
$M_2\in\mathcal{K}(k,t)$.

For technical reasons we construct a $t\times n$ matrix $M_3$ by
putting zeroes to the last $r$ rows of $M_1$. Let
$Z=M_2M_1SM_1^TM_2^T=M_2YM_2^T\in C(C(S,t),k)$. From $M_3<M_1$ we
obtain
$$\frac{mk}{t}X=M_2M_3SM_3^TM_2^T\leq Z.$$ Using that
$\gamma(\frac{mk}{t}X)=\frac{cmk}{t}$ and that $\gamma(Z)=c$ we
get that dist$(Z,\frac{mk}{t}X)\leq \frac{cr}{t}$ and thus $\dist(Z,X)\leq \frac{2cr}{t}<\frac{2ck}{t}$. Finally, Lemma \ref{tart} concludes the proof.
\end{proof}

\begin{lemma}\label{protav}
Let $X$ and $Y$ be two nonnegative matrices of the same size $n$
such that $\|X-Y\|_1\leq\varepsilon$ and let
$M\in\mathcal{K}(k,n)$ for some natural number $k$. Then $$\|MXM^T-MYM^T\|_1\leq\varepsilon.$$
\end{lemma}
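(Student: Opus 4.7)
The plan is to reduce the estimate to a direct expansion using only the column-sum-one property of the partition matrix. Setting $D:=X-Y$, we have $\|D\|_1\leq\varepsilon$, and by linearity it suffices to bound $\|MDM^T\|_1$ by $\|D\|_1$.

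First, I would write out the entries explicitly:
\[
(MDM^T)_{i,j}=\sum_{a,b}M_{i,a}\,D_{a,b}\,M_{j,b}.
\]
Since $M$ is nonnegative, the triangle inequality gives
\[
|(MDM^T)_{i,j}|\leq \sum_{a,b}M_{i,a}\,|D_{a,b}|\,M_{j,b}.
\]
Summing over $i$ and $j$ and interchanging the order of summation, I obtain
\[
\|MDM^T\|_1\leq \sum_{a,b}|D_{a,b}|\Bigl(\sum_{i}M_{i,a}\Bigr)\Bigl(\sum_{j}M_{j,b}\Bigr).
\]
Now the defining property of $\mathcal{K}(k,n)$ that I invoke is that each column sum of $M$ equals $1$, so the two inner sums are $1$, leaving $\|MDM^T\|_1\leq\|D\|_1\leq\varepsilon$, as desired.

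There is no real obstacle here: the row-sum condition in the definition of $\mathcal{K}(k,n)$ plays no role, only the nonnegativity of $M$ and the fact that its column sums are $1$. The argument is essentially a change of order of summation combined with the triangle inequality.
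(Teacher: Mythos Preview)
Your proof is correct and is essentially the same as the paper's argument: the paper decomposes $X-Y=A-B$ into its positive and negative parts and then uses $\|MAM^T\|_1+\|MBM^T\|_1=\gamma(A)+\gamma(B)$ via the column-sum-one property, which is exactly your entrywise triangle inequality computation with $|D|=A+B$. Both proofs use only nonnegativity of $M$ and that its column sums equal $1$; the row-sum condition is indeed irrelevant.
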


\begin{proof}
Let us express $X-Y$ as $A-B$ with two nonnegative matrices $A,B$
where $A$ is the positive part of $X-Y$ and $B$ is the negative
part. Now $\|X-Y\|_1=\gamma(A)+\gamma(B)\leq \varepsilon$. It
follows that
$$\|MXM^T-MYM^T\|_1=\|M(A-B)M^T\|_1=\|MAM^T-MBM^T\|_1\leq$$
$$\leq\|MAM^T\|_1+\|MBM^T\|_1=\gamma(MAM^T)+\gamma(MBM^T)=\varepsilon.$$
\end{proof}

The previous lemma has the following immediate corollary:

\begin{lemma}\label{mattav}
Let $X$ and $Y$ be two nonnegative matrices of the same size $n$
such that $\|X-Y\|_1\leq\varepsilon$. Then $\mathrm{
dist}(C(X,k),C(Y,k))\leq\varepsilon$ for every natural number $k$.
\end{lemma}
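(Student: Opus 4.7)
My plan is to observe that Lemma \ref{protav} already does essentially all the work: it controls the $\ell_1$ distance uniformly over all admissible $M$. The only remaining task is to repackage this uniform bound as a Hausdorff distance bound between the sets $C(X,k)$ and $C(Y,k)$, which is a routine exercise in unwinding definitions.

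First I would pick an arbitrary element $Z \in C(X,k)$. By the definition of $C(\cdot,k)$ there exists $M \in \mathcal{K}(k,n)$ with $Z = MXM^T$. Setting $Z' := MYM^T$, we have $Z' \in C(Y,k)$ by definition, and Lemma \ref{protav} yields $\|Z - Z'\|_1 \leq \varepsilon$. Thus every point of $C(X,k)$ admits a point of $C(Y,k)$ within $\ell_1$-distance $\varepsilon$.

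By symmetry (the hypothesis $\|X-Y\|_1 \leq \varepsilon$ is symmetric in $X$ and $Y$), the same argument applied with the roles of $X$ and $Y$ swapped shows that every point of $C(Y,k)$ has a partner in $C(X,k)$ within $\ell_1$-distance $\varepsilon$. Combining these two one-sided bounds gives $\dist(C(X,k),C(Y,k)) \leq \varepsilon$, as required.

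There is no real obstacle here: the lemma is genuinely an immediate corollary of Lemma \ref{protav}, as the introductory sentence ``The previous lemma has the following immediate corollary'' already advertises. The only minor point worth stating explicitly is that the witness matrix $M$ used to realise $Z \in C(X,k)$ can be reused verbatim to produce the nearby point $MYM^T \in C(Y,k)$, which is what makes the uniform bound from Lemma \ref{protav} translate so cleanly into a Hausdorff bound.
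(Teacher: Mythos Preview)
Your proof is correct and is exactly the argument the paper has in mind: the lemma is stated as an immediate corollary of Lemma~\ref{protav}, and your write-up simply spells out how the uniform bound $\|MXM^T-MYM^T\|_1\leq\varepsilon$ over all $M\in\mathcal{K}(k,n)$ yields the two-sided Hausdorff bound. There is nothing to add or change.
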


\begin{lemma}\label{hartav}
Let $X$ and $Y$ be two nonnegative matrices such that
$\gamma(X),\gamma(Y)\leq c$ and $\dist(C(X,n),C(Y,n))\leq\varepsilon.$ Then $\dist(C(X,k),C(Y,k))\leq\varepsilon+4ck/n$.
\end{lemma}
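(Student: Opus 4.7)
The plan is to combine Lemma~\ref{itav} with Lemma~\ref{mattav} via the triangle inequality in Hausdorff distance, using $C(\cdot,n)$ as an intermediate scale. The key observation is that the bound $\dist(C(C(S,t),k),C(S,k))\leq 2ck/t$ from Lemma~\ref{itav} lets us pass back and forth between the shape at scale $k$ and the shape of the shape.

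First, I would observe that by Lemma~\ref{itav} applied to $X$ (with $t=n$) and to $Y$, we have
\[
\dist(C(C(X,n),k),C(X,k))\leq 2ck/n
\quad\text{and}\quad
\dist(C(C(Y,n),k),C(Y,k))\leq 2ck/n,
\]
since $\gamma(X),\gamma(Y)\leq c$. The triangle inequality for the Hausdorff metric therefore reduces everything to estimating $\dist(C(C(X,n),k),C(C(Y,n),k))$, which I claim is at most $\varepsilon$.

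The second step is to prove that claim by a pointwise matching argument using the hypothesis $\dist(C(X,n),C(Y,n))\leq\varepsilon$. Given any $Z\in C(C(X,n),k)$, there is some $X'\in C(X,n)$ with $Z\in C(X',k)$; by hypothesis we can pick $Y'\in C(Y,n)$ with $\|X'-Y'\|_1\leq\varepsilon$, and then Lemma~\ref{mattav} produces some $Z'\in C(Y',k)\subseteq C(C(Y,n),k)$ with $\|Z-Z'\|_1\leq\varepsilon$. The symmetric argument handles the other direction, yielding $\dist(C(C(X,n),k),C(C(Y,n),k))\leq\varepsilon$.

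Putting the three estimates together through the triangle inequality,
\[
\dist(C(X,k),C(Y,k))\leq \frac{2ck}{n}+\varepsilon+\frac{2ck}{n}=\varepsilon+\frac{4ck}{n},
\]
which is the desired bound. There is no real obstacle here; the only mildly delicate point is ensuring that the intermediate matrices $X'$ and $Y'$ produced from the Hausdorff approximation still satisfy the hypothesis of Lemma~\ref{mattav} (they do, since $C(X,n)$ and $C(Y,n)$ consist of nonnegative matrices of the same size), so that the closed-shape inclusion $C(Y',k)\subseteq C(C(Y,n),k)$ from Corollary~\ref{tart} is legitimate.
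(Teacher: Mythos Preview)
Your proof is correct and follows exactly the same route as the paper: apply Lemma~\ref{mattav} to pass from $\dist(C(X,n),C(Y,n))\leq\varepsilon$ to $\dist(C(C(X,n),k),C(C(Y,n),k))\leq\varepsilon$, then use Lemma~\ref{itav} twice and the triangle inequality. One small remark: the inclusion $C(Y',k)\subseteq C(C(Y,n),k)$ is immediate from the definition of $C(\mathscr{S},k)$ as a union, not from Corollary~\ref{tart}.
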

\begin{proof}
Using Lemma~\ref{mattav}, from $\dist(C(X,n),C(Y,n))\leq\varepsilon$, we get that $$\dist(C(C(X,n),k),C(C(Y,n),k))\leq\varepsilon.$$ Now Lemma~\ref{itav}
completes the proof.
\end{proof}

\begin{lemma}\label{apcon}
Let $k_1<k_2<\dots$ be an arbitrary strictly increasing infinite
sequence of natural numbers and let $S_1,S_2,\ldots$ be a convergent
sequence of matrices with $\gamma(S_i)\leq c$. Let furthermore
$X_1,X_2,\dots$ be a sequence of matrices with $\gamma(X_i)\leq c$
such that
$$\dist(C(X_i,k_i),\lim_{n\rightarrow\infty}C(S_n,k_i))\leq 1/i$$ for
every natural number $i$. Then the sequence $X_i$ is convergent with
$$\lim_{n\rightarrow\infty}C(X_n,k)=\lim_{n\rightarrow\infty}C(S_n,k)$$ for every natural
number $k$.
\end{lemma}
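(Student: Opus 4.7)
The plan is to fix an arbitrary natural number $k$ and show that $C(X_i,k)$ converges in the Hausdorff metric to $L_k:=\lim_{n\to\infty}C(S_n,k)$. The idea is to use Lemma~\ref{hartav} as the transfer mechanism: closeness of the shapes at the coarse scale $k_i$ pulls back to closeness at the finer scale $k$, at a cost of $4ck/k_i$ which vanishes since $k_i$ is strictly increasing. Since we need $k_i\geq k$ to apply Lemma~\ref{hartav} in this direction, we restrict attention to $i$ sufficiently large, which is harmless.

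Concretely, I would proceed as follows. Given $\varepsilon>0$ and $i$ large enough that $k_i\geq k$, use the convergence of $\{S_n\}$ at the scales $k$ and $k_i$ to choose some $n=n(i,\varepsilon)$ such that both $\dist(C(S_n,k_i),L_{k_i})\leq\varepsilon$ and $\dist(C(S_n,k),L_k)\leq\varepsilon$. Combining with the hypothesis $\dist(C(X_i,k_i),L_{k_i})\leq 1/i$ and the triangle inequality for Hausdorff distance yields
\[
\dist(C(X_i,k_i),C(S_n,k_i))\leq 1/i+\varepsilon.
\]
Now Lemma~\ref{hartav}, applied with $X=X_i$ and $Y=S_n$ (valid since $\gamma(X_i),\gamma(S_n)\leq c$), propagates this estimate to scale $k$:
\[
\dist(C(X_i,k),C(S_n,k))\leq 1/i+\varepsilon+4ck/k_i.
\]
A second application of the triangle inequality gives
\[
\dist(C(X_i,k),L_k)\leq \dist(C(X_i,k),C(S_n,k))+\dist(C(S_n,k),L_k)\leq 1/i+2\varepsilon+4ck/k_i.
\]
Letting $\varepsilon\to 0$ (with the caveat that $n$ depends on $\varepsilon$, which is fine since $\varepsilon$ was arbitrary) yields $\dist(C(X_i,k),L_k)\leq 1/i+4ck/k_i$, and this tends to $0$ as $i\to\infty$ because $1/i\to 0$ and $k_i\to\infty$.

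Since $k$ was arbitrary, this shows simultaneously that $\{X_i\}$ is convergent and that its limit shapes agree with those of $\{S_n\}$. I do not anticipate any real obstacle: the argument is essentially a clean two-step triangle-inequality sandwich, with Lemma~\ref{hartav} doing all the technical work. The only mild subtlety is book-keeping the order of quantifiers (choosing $n$ after $\varepsilon$ and $i$), but this is straightforward.
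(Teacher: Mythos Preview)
Your proposal is correct and follows essentially the same route as the paper: triangle inequality at scale $k_i$, then Lemma~\ref{hartav} to transfer to scale $k$, then pass to the limit. The only cosmetic difference is that the paper fixes the auxiliary tolerance to $1/i$ (obtaining the bound $2/i+4ck/k_i$) rather than introducing a free $\varepsilon$ and sending it to zero, and it does not bother to note the harmless restriction $k_i\geq k$.
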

\begin{proof}
From the above conditions we get that for each index $i$ there is an
index $n_i$ such that
$$\dist(C(X_i,k_i),C(S_n,k_i))\leq 2/i$$
for all $n>n_i$. From Lemma~\ref{hartav} it follows that
$$\dist(C(X_i,k),C(S_n,k))\leq 2/i+4ck/k_i$$ if $n>n_i$.
This means that
$$\dist(C(X_i,k),\lim_{n\rightarrow\infty}C(S_n,k))\leq
2/i+4ck/k_i.$$ Consequently
$$\lim_{i\rightarrow\infty}C(X_i,k)=\lim_{n\rightarrow\infty}C(S_n,k).$$
\end{proof}

\section{A regularity lemma for measures}\label{ch:reg}

\begin{lemma}\label{balancing}
Let $c,\varepsilon$ be positive numbers and let $k_1, k_2\ldots, k_r$ be positive integers for some $r\in\mathbb{N}^+$. Then there exists a positive integer 
\[
m<\max\left(\frac{4c\left(\prod_{i=1}^{r}
k_i\right)^2}{\varepsilon},\prod_{i=1}^{r} k_i\right)
\]
that 
can be expressed as $(\prod_{i=1}^{r}
k_i)^{\alpha}$ for some natural number $\alpha$ with the following property.

Let $S$ be a nonnegative $n\times n$ matrix with $\gamma(S)\leq c$. For any system  $\{X_1,X_2,\dots,X_r\}$ of matrices with $X_i\in C(S,k_i)$ for all $1\leq i\leq
r$ and some $k_i\in\mathbb{N}$ we can find a matrix $Z\in C(S,m)$ such that
for every $1\leq i\leq r$, there exists a matrix
$\mathcal{M}_i\in\widehat{\mathcal{K}}(k_i,m)$ with
\[
\left\|\mathcal{M}_iZ\mathcal{M}_i^T-X_i\right\|_1<\varepsilon.
\]
\end{lemma}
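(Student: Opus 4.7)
The plan is to construct a single balanced partition $N\in\mathcal{K}(m,n)$ that serves as a common approximate refinement of the fractional partitions $M_1,\ldots,M_r$ witnessing $X_i=M_iSM_i^T$, and then set $Z:=NSN^T\in C(S,m)$ with $\mathcal{M}_i\in\widehat{\mathcal{K}}(k_i,m)$ obtained by aggregating the parts of $N$ back to classes of $M_i$.

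First I form the product fractional partition $M^\ast\in\mathbb{R}_{\ge0}^{K\times n}$, with $K:=\prod_ik_i$ and $M^\ast(\vec a,j):=\prod_iM_i(a_i,j)$; the coordinate projections $\pi_i\colon[K]\to[k_i]$ satisfy $\pi_iM^\ast=M_i$ exactly. I then choose the minimal $\alpha$ such that $m:=K^\alpha$ lies in the permitted range and refine $M^\ast$ into a balanced $N$: for each $\vec a$ with row mass $\sigma(\vec a):=\sum_jM^\ast(\vec a,j)$, I create $g(\vec a):=\lfloor\sigma(\vec a)m/n\rfloor$ \emph{pure} parts of label $\vec a$, each of mass $n/m$ with column weights proportional to $M^\ast(\vec a,\cdot)$; the residual column mass is absorbed into the at most $K$ remaining \emph{mixed} parts. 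Labels $\phi\colon[m]\to[K]$ extend the natural labels on pure parts, with mixed parts labelled so that each $\mathcal{M}_i:=\pi_i\circ\phi$ is balanced in $\widehat{\mathcal{K}}(k_i,m)$; feasibility follows because the pure-part counts are already within $K/k_i$ of the target $m/k_i$, and the remaining $m-F$ labels can be assigned via a rounded product $c(\vec a)\propto\prod_ih_i(a_i)$ on the per-marginal deficits $h_i(a_i)$.

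For the error estimate, set $N_i:=\mathcal{M}_iN$, $D:=N_i-M_i$. A direct computation gives $\|D(\cdot,j)\|_1\le 2r_j$, where $r_j$ is the mass of column $j$ in mixed parts, and $\sum_jr_j\le Kn/m$. Combining the splitting
\[
N_iSN_i^T-M_iSM_i^T=(N_i-M_i)SN_i^T+M_iS(N_i-M_i)^T
\]
with a Lemma~\ref{protav}-style inequality $\|DSN_i^T\|_1\le\sum_jR_j\|D(\cdot,j)\|_1$ (where $R_j,C_j$ are the row and column sums of $S$), one obtains $\|\mathcal{M}_iZ\mathcal{M}_i^T-X_i\|_1\le 2\sum_j(R_j+C_j)r_j$. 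Rewriting $\sum_jR_jr_j=\sum_{\vec a}(\rho(\vec a)/\sigma(\vec a))Y^{\mathrm{row}}_{\vec a}$ with $Y:=M^\ast S(M^\ast)^T$ and $\sum_{\vec a}Y^{\mathrm{row}}_{\vec a}=\gamma(S)\le c$, and choosing the mixed-part label distribution so that its pushforward matches the small-class deviations, yields $\|\mathcal{M}_iZ\mathcal{M}_i^T-X_i\|_1<\varepsilon$ whenever $m\ge 4cK^2/\varepsilon$.

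The main obstacle will be the last step: a naive bound combining $\sum_jr_j\le Kn/m$ with $R_j\le c$ brings in an unwanted factor of $n$, the culprit being ``small'' product classes $\vec a$ with $\sigma(\vec a)<n/m$ on which $\rho(\vec a)/\sigma(\vec a)=1$. The clean $n$-free estimate requires either separating the sum into small and large classes and using the global budget $\sum_{\vec a}Y^{\mathrm{row}}_{\vec a}=\gamma(S)$ to absorb the small-class part, or a preliminary step that merges small product classes so that their residual mass is only seen in mixed parts whose label distribution has been chosen to cancel their $a_i^\ast$-wise bias. Once this is in place, verifying that $m=K^\alpha$ can be chosen in the prescribed range is routine: one takes $\alpha=2$ when $\varepsilon\le4c$, $\alpha=1$ when $4c<\varepsilon\le4cK$, and $\alpha=0$ (i.e.\ $m=1$) otherwise, the trivial case being handled by the fact that $\|\mathcal{M}_iZ\mathcal{M}_i^T-X_i\|_1\le 2c<\varepsilon$ there.
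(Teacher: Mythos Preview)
Your construction of the product fractional partition $M^\ast$ with rows indexed by $\vec a\in\prod_i[k_i]$ is exactly the paper's starting point. The divergence, and the gap, is in how you balance it into an element of $\mathcal{K}(m,n)$. You absorb the residual mass of each product class into ``mixed'' parts and then try to control the resulting two-sided perturbation $D=N_i-M_i$ column by column. As you yourself flag, this does not close: your rewriting $\sum_j R_j r_j=\sum_{\vec a}(\rho(\vec a)/\sigma(\vec a))Y^{\mathrm{row}}_{\vec a}$ together with $\sum_{\vec a}Y^{\mathrm{row}}_{\vec a}\le c$ only yields $\sum_j R_j r_j\le c$, since $\rho(\vec a)/\sigma(\vec a)=1$ on small classes, so the final bound is $\le 4c$ rather than $<\varepsilon$. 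The proposed remedies (separating small and large classes, or pre-merging small classes with a tailored mixed-label distribution) are not carried out, and it is not clear either one works: the global budget does not control how the small-class $S$-mass distributes across the $k_i$ labels after the $\pi_i$-pushforward, which is what the $\ell_1$ error actually depends on.

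The paper avoids this entirely by a different balancing device. Instead of mixed parts carrying residual mass from specific product classes, it scales each row of $M^\ast$ by $p/q$ (with $p=m-K$, $q=m$) and pads everything with copies of the \emph{uniform} row $\frac{1}{q}\vec{1}$. The point is that every correction added is entrywise nonnegative, so after aggregating via the balanced partition matrices $Y_i$ one obtains the entrywise inequality
\[
\frac{p^2}{q^2}\,X_i \;\le\; Y_i Z Y_i^T.
\]
Both sides are nonnegative with known entry sums ($c_1 p^2/q^2$ and $c_1$ respectively, where $c_1=\gamma(S)$), so the $\ell_1$ distance is simply the difference of totals, $c_1(1-p^2/q^2)\le 4c_1(1-p/q)=4c_1K/m$, with no dependence on $n$ at all. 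This monotonicity-plus-total-mass trick is the missing idea in your argument; once you have it, the error bound is a one-liner and the choice of $m$ as the largest power of $K$ not exceeding $4cK^2/\varepsilon$ (or $K$ itself) follows immediately.
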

\begin{proof}
For all $1\leq i\leq r$, let $M_i$ be an element of
$\mathcal{K}(k_i,n)$ such that $M_iSM_i^T=X_i$.\\
The initial idea is to create some sort of a joint ``blow-up'' $M$ of these matrices that can be used to generate $Z$. In other words, $M$ should have column sums all equal to 1, and all row sums also equal. At the same time, we would need $M$ to have the property that grouping and adding up its rows in different (equitable) ways, we obtain each of the $M_i$-s.\\
To deal with our second goal, we first create a
$k_1k_2\ldots k_r\times n$ matrix $T$ whose rows are indexed by
sequences $(a_1,a_2,\dots,a_r)$ with $1\leq a_i\leq k_i$ such that
the $j$-th element of the row corresponding to every such sequence
is $\prod_{i=1}^{r}M_i(a_i,j)$. Note that given $1\leq j\leq r$, adding up the rows with the same corresponding $a_j$ yields $M_j$. We denote by $g(a_1,\dots,a_r)$
the sum of the elements in the row corresponding to
$(a_1,a_2,\dots,a_r)$.\\
The issue with this matrix is that although it is a blow-up of the $M_i$-s, and all its column sums are 1, the row sums do not match up. To correct this, we want to further blow-up each row according to its weight $g(\cdot)$. However, the row sums need not have ratios that allow for this, and so we shall scale down $T$, do some rounding in the number of blow-ups of each row, and use uniform rows to compensate. The actual error in approximating the $X_i$-s will come from this step.\\
Let $\vec{1}$ denote the all $1$ vector of
length $n$. We introduce a new matrix $T_{p,q}$ for two natural
numbers $p,q$ with $p+k_1k_2\dots k_r\leq q$, which is obtained from
$T$ by the following operations:

\begin{itemize}
\item  For every possible sequence $(a_1,a_2,\dots,a_r)$ we
replace the corresponding row $\vec{w}$ of $T$ by
$$\frac{p}{q\lceil g(a_1,a_2,\dots,a_r)p/n \rceil}\vec{w} +\left(\frac{1}{q}-\frac{pg(a_1,a_2,\dots,a_r)}{qn\lceil g(a_1,a_2,\dots,a_r)p/n
\rceil}\right) \vec{1} .$$ (If $g(a_1,a_2,\dots,a_r)=0$, then we replace $\vec{w}$ by
$\frac{1}{q} \vec{1}$.)

\item Next, for every possible sequence $(a_1,a_2,\dots,a_r)$, we replace the corresponding row by $\lceil
g(a_1,a_2,\dots,a_r)p/n\rceil$ identical copies of the same row. Let
us denote the resulting matrix by $\widehat{T}_{p,q}$.

\item Next, we add $q-\ell$ copies of $\frac{1}{q}\vec{1}$ as new rows where $\ell$
denotes the number of rows in $\widehat{T}_{p,q}$. (We shall show below that
$q-\ell\geq 0$.)
\end{itemize}

Since $$\sum_{1\leq a_i\leq k_i} g(a_1,a_2,\dots,a_r)=n,$$ we
get that the number of rows in $\widehat{T}_{p,q}$ is $$\sum_{1\leq
a_i\leq k_i} \lceil g(a_1,a_2,\dots,a_r)p/n\rceil$$ which is at most
$p+k_1k_2\dots k_r$. This means that $\widehat{T}_{p,q}$ has indeed at most $q$
rows. The matrix $T_{p,q}$ has $q$ rows, each with element sum
$n/q$. Furthermore a simple calculation shows that $T_{p,q}$ is
a nonnegative matrix and in each column the sum of the elements is
$1$. Consequently $T_{p,q}\in\mathcal{K}(q,n)$.

Let us fix an index $1\leq j \leq r$ and $t \in [k_j]$. Let $F_t$ denote the collection
of those rows in $\widehat{T}_{p,q}$ which are coming from a row in $T$
indexed by a sequence $(a_1,\ldots,a_r)$ with $a_j=t$. The equation
$$\sum_{a_e=f, 1\leq a_i\leq k_i} g(a_1,a_2,\dots,a_r)=n/k_e$$
shows that $p/k_j\leq |F_t|\leq p/k_j+(k_1k_2\dots k_r)/k_j\leq
q/k_j$. This means that in $T_{p,q}$ we can enlarge the row sets
$F_t$ (where $1\leq t\leq k_j$) in such a way that we obtain a balanced partition
$\Omega_j$ of the row set of $T_{p,q}$ into $k_i$ sets.

Let $Y_j$ be the $k_j\times q$ matrix which is the characteristic
matrix of the previous partition. Let $\bar{T}_{p,q}$ be the matrix
constructed from $T$ the same way as $T_{p,q}$ but using $\vec{0}$ instead of $\vec{1}$ everywhere. It is
easy to see that for this matrix
$$Y_j \bar{T}_{p,q}=\frac{p}{q} M_j.$$
Let $Z=T_{p,q}ST_{p,q}^T\in C(S,q)$. We have that
$$\frac{p^2}{q^2}X_j = Y_j \bar{T}_{p,q} S \bar{T}_{p,q}^T Y_j^T \leq
Y_j T_{p,q} S T_{p,q}^T Y_j^T=Y_jZY_j^T.$$
If $\gamma(S)=c_1\leq c$ then
$\gamma(\frac{p^2}{q^2}X_j)=c_1\frac{p^2}{q^2}X_j$ and
$\gamma(Y_jZY_j^T)=c_1$. It follows that
$$\left\|\frac{p^2}{q^2}X_j-Y_jZY_j^T\right\|_1\leq c_1\left(1-\frac{p^2}{q^2}\right).$$
From $\|X_j-\frac{p^2}{q^2}X_j\|_1\leq c_1\left(1-\frac{p^2}{q^2}\right)$ we obtain
that
$$\left\|X_j-Y_jZY_j^T\right\|_1\leq 2c_1\left(1-\frac{p^2}{q^2}\right)\leq 4c_1\left(1-\frac{p}{q}\right).$$
Let $d=k_1k_2\dots k_r$ and $q_1$ be the largest integer power of
the product $d$ which is not larger than $4cd^2/\varepsilon$ and let
$m:=q=\max(q_1,d)$. Let $p=q-d$. Since $q>4cd/\varepsilon$, we obtain that
$4c_1\left(1-\frac{p}{q}\right)<\varepsilon$, meaning that $Z$ satisfies the
required properties.
\end{proof}

\begin{remark}
Note that the above choice of $m$ lets us improve the approximation by a constant, i.e., instead of the upper bound $<\varepsilon$, we actually have $\leq 4cd/m$.
\end{remark}

The same statement as in the previous lemma holds if we replace $S$
by a convergent sequence $S_i$. More precisely, we have the following result.

\begin{corollary}\label{limbal}
Let $S_i$ be a shape-convergent sequence of nonnegative matrices with $\gamma(S_i)\leq c$, and for each $k\in\mathbb{N}^+$ let $C(k):=\lim C(S_i,k)$.
Let $\varepsilon$ be a positive number, let $k_1,k_2,\ldots,k_r$ be positive integers, and let $m$ be the positive integer yielded by Lemma \ref{balancing}. Then for any system $\{X_1,X_2,\dots,X_r\}$
of matrices with $X_i\in C(k_i)$ for all $1\leq i\leq
r$, there exists a matrix $Z\in C(m)$ such that
for every $1\leq i\leq r$, there is a matrix
$\mathcal{M}_i\in\widehat{\mathcal{K}}(k_i,m)$ with
\[
\left\|\mathcal{M}_iZ\mathcal{M}_i^T-X_i\right\|_1<\varepsilon.
\]

\end{corollary}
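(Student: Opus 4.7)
\medskip

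\noindent\textbf{Proof proposal.} The plan is to apply Lemma~\ref{balancing} individually to each $S_n$ in the convergent sequence, and then extract a convergent subsequence of the resulting matrices $(Z_n, \mathcal{M}_1^{(n)}, \ldots, \mathcal{M}_r^{(n)})$ using compactness. By the Hausdorff convergence $C(S_n, k_j) \to C(k_j)$, for each $1 \leq j \leq r$ I can choose a sequence $X_j^{(n)} \in C(S_n, k_j)$ with $X_j^{(n)} \to X_j$ as $n \to \infty$. Applying Lemma~\ref{balancing} (and, crucially, the sharper bound recorded in the remark following it) to the matrix $S_n$ with the system $\{X_1^{(n)}, \ldots, X_r^{(n)}\}$, I obtain a matrix $Z_n \in C(S_n, m)$ and partition matrices $\mathcal{M}_j^{(n)} \in \widehat{\mathcal{K}}(k_j, m)$ satisfying
\[
\bigl\|\mathcal{M}_j^{(n)} Z_n \bigl(\mathcal{M}_j^{(n)}\bigr)^T - X_j^{(n)}\bigr\|_1 \leq \frac{4cd}{m}
\]
for every $1 \leq j \leq r$, where $d = \prod_{i=1}^{r} k_i$.

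Next I would pass to a subsequence in two steps. First, since each set $\widehat{\mathcal{K}}(k_j, m)$ is finite (as a collection of balanced $0/1$ partition matrices), the tuples $(\mathcal{M}_1^{(n)}, \ldots, \mathcal{M}_r^{(n)})$ lie in a finite set, and by pigeonhole I can extract a subsequence along which each $\mathcal{M}_j^{(n)} = \mathcal{M}_j$ is constant. Second, since $\gamma(Z_n) = \gamma(S_n) \leq c$ and $Z_n$ has nonnegative entries, $\{Z_n\}$ lies in the compact set $[0,c]^{m \times m}$, so a further subsequence satisfies $Z_n \to Z$. Because $Z_n \in C(S_n, m)$ and $C(S_n, m) \to C(m)$ in the Hausdorff metric, the limit belongs to $C(m)$. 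Using continuity of $Z \mapsto \mathcal{M}_j Z \mathcal{M}_j^T$ together with $X_j^{(n)} \to X_j$ and passing to the limit in the displayed inequality yields
\[
\bigl\|\mathcal{M}_j Z \mathcal{M}_j^T - X_j\bigr\|_1 \leq \frac{4cd}{m} < \varepsilon,
\]
which is the required conclusion.

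The one subtlety I want to flag is the need for a \emph{strict} inequality in the limit. Passing to the limit in a strict inequality only guarantees a non-strict one, so if the approximation in Lemma~\ref{balancing} had read merely ``$<\varepsilon$'' with no uniform slack, the argument would only yield ``$\leq\varepsilon$''. This is precisely why the remark following Lemma~\ref{balancing} is essential here: the quantitative bound $4cd/m$ is strictly smaller than $\varepsilon$ by construction of $m$, and it is uniform over the sequence $S_n$, so it survives intact when we take the limit.
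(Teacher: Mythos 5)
Your proof is correct. It does, however, take a different route from the paper in the step where the statement is transferred from the individual matrices $S_n$ to the limit shapes. The paper fixes a single index $\ell$ for which all the shapes $C(S_\ell,k)$, $1\leq k\leq m$, are within a carefully budgeted $\delta<\tfrac12(\varepsilon-4cd/m)$ of their limits, applies Lemma~\ref{balancing} once to $S_\ell$ with approximants $\widetilde{X}_i\in C(S_\ell,k_i)$ of the $X_i$, and then moves back to $C(m)$ via the triangle inequality, using Lemma~\ref{protav} to control how the two $\delta$-errors propagate through conjugation by $\mathcal{M}_i$; the total error is $\delta+4cd/m+\delta<\varepsilon$. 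You instead apply the lemma to every $S_n$ and extract limits by compactness: pigeonhole on the finite sets $\widehat{\mathcal{K}}(k_j,m)$ to freeze the partition matrices, compactness of $[0,c]^{m\times m}$ to get $Z_n\to Z$, and Hausdorff convergence to place $Z$ in $C(m)$. Both arguments lean on the same two inputs -- the uniformity of $m$ in $S$ and the sharpened bound $4cd/m<\varepsilon$ from the remark (the paper needs the latter to leave room for the two $\delta$'s, you need it so that strictness survives the limit, a point you correctly flag). What each buys: the paper's version is quantitative and constructive (it tells you which single $S_\ell$ suffices and never needs a subsequence), while yours is softer but cleaner in that it avoids the explicit error bookkeeping, at the cost of the extra (true and easy) observation that $\widehat{\mathcal{K}}(k_j,m)$ is finite.
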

\begin{proof}
Let again $d:=k_1\ldots k_r$, and recall that $m>4cd/\varepsilon$. Thus we may choose a positive number 
\[
\delta<\frac{1}{2}(\varepsilon-4cd/m).
\] 
Now choose an $S_\ell$ such that $d(C(S_\ell,k),C(k))\leq\delta$ for all $1\leq k\leq m$. Then we can find matrices $\widetilde{X}_i\in C(S_\ell,k_i)$ with $\|\widetilde{X}_i-X_i\|_1\leq\delta$ ($1\leq i\leq r$), and apply the previous lemma to obtain matrices $\widetilde Z\in C(S_\ell,m)$ and $\mathcal{M}_i\in\widehat{\mathcal{K}}(k_i,m)$ with
\[
\left\|\mathcal{M}_i\widetilde{Z}\mathcal{M}_i^T-\widetilde{X}_i\right\|_1<\varepsilon
\]
for each $1\leq i\leq r$.
Now choose a $Z\in C(m)$ with $\|\widetilde{Z}-Z\|_1\leq\delta$. Then by Lemma \ref{protav} we obtain for each $1\leq i \leq r$ that
\begin{align*}
\left\|\mathcal{M}_iZ\mathcal{M}_i^T-X_i\right\|_1
&\leq
\left\|\mathcal{M}_iZ\mathcal{M}_i^T-\mathcal{M}_i\widetilde{Z}\mathcal{M}_i^T\right\|_1+\left\|\mathcal{M}_i\widetilde{Z}\mathcal{M}_i^T-\widetilde{X}_i\right\|_1+\left\|\widetilde{X}_i-X_i\right\|_1\\
&\leq \delta+4cd/m+\delta<\varepsilon.
\end{align*}
\end{proof}

\begin{lemma}\label{pointset}
Let $S$ be a nonnegative matrix with $\gamma(S)=c$. Then for every
natural number $k$ and positive number $\varepsilon$ there is a set
$H\subset C(S,k)$ such that

\noindent 1.) $\dist(H,S(S,k))\leq \varepsilon$

\noindent 2.) $|H|\leq (\lceil ck^2/\varepsilon\rceil)^{k^2}.$
\end{lemma}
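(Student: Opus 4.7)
The plan is to produce $H$ as a set of representatives from a grid discretization of the ambient cube $[0,c]^{k\times k}$, intersected with $C(S,k)$ (I read ``$S(S,k)$'' in the statement as $C(S,k)$). This is really just a volume/counting argument and I do not foresee any serious obstacle.

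The key preliminary observation is that $C(S,k)\subset[0,c]^{k\times k}$. Indeed, for any $X=MSM^T\in C(S,k)$ with $M\in\mathcal{K}(k,n)$, the row-sum/column-sum normalization of $M$ yields
\[
\gamma(X)=\sum_{i,j}X_{i,j}=\sum_{x,y}S_{x,y}\Bigl(\sum_i M_{i,x}\Bigr)\Bigl(\sum_j M_{j,y}\Bigr)=\gamma(S)=c,
\]
and since the entries of $X$ are nonnegative, each one lies in $[0,c]$.

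Next I would set $N:=\lceil ck^2/\varepsilon\rceil$ and partition $[0,c]$ into $N$ consecutive (say, half-open) intervals of length at most $c/N\le\varepsilon/k^2$. Taking products of these intervals across the $k^2$ matrix coordinates partitions $[0,c]^{k\times k}$ into $N^{k^2}$ boxes $\{B_\alpha\}$, each of $\ell_1$-diameter at most $k^2\cdot(\varepsilon/k^2)=\varepsilon$. For every $\alpha$ with $B_\alpha\cap C(S,k)\ne\emptyset$ I pick one representative $x_\alpha\in B_\alpha\cap C(S,k)$ (the axiom of choice is not needed since we may, e.g., order the boxes and exploit compactness of $C(S,k)$), and collect these picks into $H$.

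Then $H\subset C(S,k)$ and $|H|\le N^{k^2}=\lceil ck^2/\varepsilon\rceil^{k^2}$, giving (2). For (1), any $Y\in C(S,k)$ lies in some box $B_\alpha$, which therefore contains a representative $x_\alpha\in H$ with $\|Y-x_\alpha\|_1$ at most the $\ell_1$-diameter $\varepsilon$ of $B_\alpha$; since $H\subset C(S,k)$ the reverse supremum in the Hausdorff distance vanishes, so $\dist(H,C(S,k))\le\varepsilon$. The only calibration to be careful with is matching the per-coordinate mesh to the $\ell_1$-diameter, which is a sum over $k^2$ coordinates; the factor $k^2$ in the denominator of the mesh exactly compensates and yields the claimed bound.
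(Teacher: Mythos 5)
Your proposal is correct and follows exactly the paper's own argument: note that $C(S,k)\subseteq[0,c]^{k^2}$ since $\gamma$ is preserved, cut the cube into $\lceil ck^2/\varepsilon\rceil^{k^2}$ boxes of side length at most $\varepsilon/k^2$ (hence $\ell_1$-diameter at most $\varepsilon$), and pick one point of $C(S,k)$ from each box it meets. No differences worth noting.
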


\begin{proof}
Since every element $X$ of $C(S,k)$ is a nonnegative matrix with
$\gamma(X)=c$ we have that $C(S,k)$ is contained in the cube
$C=[0,c]^{k^2}$. Let us divide $C$ into small cubes of side length
$\varepsilon/k^2$. We construct $H$ such that from each small cube,
which intersects $C(S,k)$ we choose a point which is contained in
the intersection. Since the diameters of the small cubes (measured
in $\ell_1$) are $\varepsilon$ the set $H$ satisfies the required
properties.
\end{proof}

\begin{proposition}\label{regularity}
Let $S$ be a nonnegative matrix with $\gamma(S)=c$, let $k$ be a
natural number and let $\varepsilon$ be a positive number. Then there
is a natural number $$m<(4c/\varepsilon)k^{(2ck^2/\varepsilon)^{k^2}}$$
which is a power of $k$ and an element $X\in C(S,m)$ such that $\dist(C(X,k),C(S,k))\leq\varepsilon$.
\end{proposition}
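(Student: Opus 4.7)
The plan is to combine Lemma~\ref{pointset} with Lemma~\ref{balancing} and close the argument using Corollary~\ref{tart}. The key idea is that a single shape matrix $X\in C(S,m)$ (for a suitably chosen $m$) can simultaneously approximate every point of a finite $\varepsilon$-net of $C(S,k)$ via balanced partitions of size $k$, while $C(X,k)$ automatically sits inside $C(S,k)$ for free.

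First I would apply Lemma~\ref{pointset} with tolerance $\varepsilon/2$ to produce a finite subset $H=\{X_1,\ldots,X_r\}\subset C(S,k)$ with $r\leq(\lceil 2ck^2/\varepsilon\rceil)^{k^2}$ whose $(\varepsilon/2)$-neighbourhood covers all of $C(S,k)$. Next I would apply Lemma~\ref{balancing} to the system $(X_1,\ldots,X_r)$ with $k_i=k$ for every $i$ and tolerance $\varepsilon/2$. This yields an integer $m$ which is a power of $k^r$, hence of $k$, bounded by $\max(8ck^{2r}/\varepsilon,\,k^r)$, together with a matrix $X\in C(S,m)$ such that for each $1\leq i\leq r$ there exists a balanced partition matrix $\mathcal{M}_i\in\widehat{\mathcal{K}}(k,m)$ satisfying
\[
\|\mathcal{M}_iX\mathcal{M}_i^T-X_i\|_1<\varepsilon/2.
\]
Because $k$ divides $m$, each $\mathcal{M}_i$ also lies in $\mathcal{K}(k,m)$, so $\mathcal{M}_iX\mathcal{M}_i^T\in C(X,k)$.

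To finish, I would check the two one-sided Hausdorff distances separately. Given any $Y\in C(S,k)$, pick $X_i\in H$ with $\|Y-X_i\|_1\leq\varepsilon/2$; the triangle inequality then yields $\|\mathcal{M}_iX\mathcal{M}_i^T-Y\|_1\leq\varepsilon$, so $Y$ is within $\varepsilon$ of $C(X,k)$. Conversely, Corollary~\ref{tart} applied to $X\in C(S,m)$ gives $C(X,k)\subset C(C(S,m),k)\subset C(S,k)$, so every element of $C(X,k)$ is already in $C(S,k)$. Combining the two directions produces $\dist(C(X,k),C(S,k))\leq\varepsilon$.

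The quantitative bound on $m$ then follows by plugging $r\leq(2ck^2/\varepsilon)^{k^2}$ into $m<8ck^{2r}/\varepsilon$, matching the claimed $(4c/\varepsilon)k^{(2ck^2/\varepsilon)^{k^2}}$ up to inessential adjustments in the constants (which can be tightened by rebalancing the two error tolerances). The main nuisance, rather than a genuine obstacle, is this constant bookkeeping; the actual conceptual content is the two-step ``$\varepsilon$-net plus simultaneous balanced approximation'' construction, combined with the automatic containment $C(C(S,m),k)\subset C(S,k)$ from Corollary~\ref{tart}.
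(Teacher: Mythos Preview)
Your proposal is correct and follows exactly the paper's approach: the paper's proof is the two-line statement ``apply Lemma~\ref{pointset} with $\varepsilon/2$, then Lemma~\ref{balancing} with $\varepsilon/2$,'' and you have simply unpacked the Hausdorff-distance verification and the implicit use of Corollary~\ref{tart}. The constant bookkeeping you flag is equally loose in the paper itself, so there is no discrepancy of substance.
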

\begin{proof}
We apply Lemma \ref{pointset} for $S$ with $\varepsilon/2$ and then
Lemma \ref{balancing} for the resulting set $H$ with $\varepsilon/2$.
\end{proof}

\section{Proof of Theorem~\ref{thm:shapeMeasure} and Theorem~\ref{thm:shpm2b}}\label{ch:main} 

Throughout this section $S_1,S_2,\dots$ is a convergent sequence of
nonnegative matrices. Using the convergence, we can assume that
$\gamma(S_i)<c$ for all $i$. We denote by $C(k)$ the limit
$\lim_{i\rightarrow\infty} C(S_i,k)$ which is a closed subset of
$[0,c]^{k^2}$. Using Corollary~\ref{tart} and Lemma~\ref{mattav}, it
follows that for an arbitrary $r$ we have that $C(C(r),k)\subseteq
C(k)$.

\begin{lemma}\label{limreg} For every positive number $\varepsilon$ and natural
number $k$, there is a natural number $q$ which is a power of $k$ and
an element $X\in C(q)$ such that
$$\dist(C(X,k),C(k))\leq\varepsilon$$.
\end{lemma}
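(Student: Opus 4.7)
The plan is to lift Proposition~\ref{regularity} from a single matrix to the convergent sequence $\{S_i\}$ by a compactness and pigeonhole argument, exploiting the fact that the bound on $m$ in Proposition~\ref{regularity} depends only on $c$, $k$, and $\varepsilon$, not on the matrix itself.

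First I would apply Proposition~\ref{regularity} to each $S_i$ (using $\gamma(S_i) \leq c$ and parameter $\varepsilon/2$) to obtain a positive integer $m_i$, a power of $k$ with $m_i < N:=(8c/\varepsilon) k^{(4ck^2/\varepsilon)^{k^2}}$, together with an element $X_i \in C(S_i, m_i)$ satisfying
\[
\dist(C(X_i,k),\, C(S_i,k)) \leq \varepsilon/2.
\]
Since the $m_i$ are all powers of $k$ bounded by $N$, they take only finitely many values; by pigeonhole we may pass to a subsequence along which $m_i$ equals a fixed power $q$ of $k$.

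Next I would argue that the sequence $(X_i)$ admits an accumulation point in $C(q)$. Indeed, $\gamma(X_i) = \gamma(S_i) \leq c$, so the $X_i$ all lie in the compact cube $[0,c]^{q\times q}$, and we can extract a further subsequence (which I still denote $X_i$) converging in $\ell_1$ to some matrix $X$. Since $X_i \in C(S_i,q)$ and the sets $C(S_i,q)$ converge in Hausdorff metric to the closed set $C(q)$, the limit $X$ must belong to $C(q)$.

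Finally I would combine continuity with the defining inequality to conclude. By Lemma~\ref{mattav}, $\dist(C(X_i,k),C(X,k)) \leq \|X_i-X\|_1 \to 0$, so $C(X_i,k) \to C(X,k)$ in Hausdorff metric; simultaneously $C(S_i,k) \to C(k)$ by hypothesis. Taking the limit in the inequality $\dist(C(X_i,k), C(S_i,k)) \leq \varepsilon/2$ yields
\[
\dist(C(X,k),\, C(k)) \leq \varepsilon/2 < \varepsilon,
\]
which is what is required. The only real subtlety is the pigeonhole step for $m_i$, which relies crucially on the bound in Proposition~\ref{regularity} being independent of the underlying matrix $S$; the rest is routine compactness and continuity.
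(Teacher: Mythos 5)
Your proposal is correct and follows essentially the same route as the paper: apply Proposition~\ref{regularity} to each $S_i$, pigeonhole on the finitely many possible values of $m_i$, extract a convergent subsequence of the $X_i$ in the compact cube, and pass to the limit using Lemma~\ref{mattav}. Your version is in fact slightly more careful than the paper's, since you explicitly justify that the limit $X$ lies in $C(q)$ via the Hausdorff convergence $C(S_i,q)\to C(q)$.
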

\begin{proof}
By Proposition~\ref{regularity}, we have a sequence of natural
numbers $m_i$ and $X_i\in C(S_i,m_i)$ such that $\dist(C(X_i,k),C(S_i,k))\leq\varepsilon$ where the numbers $m_i$ are
all powers of $k$ and furthermore they are all smaller than a value
depending only on $c,k$ and $\varepsilon$. This means that there is
only finitely many possible values for the numbers $m_i$ and thus
there is an infinite sequence $t_1<t_2<\dots$ of natural numbers
with $m_{t_i}=r$ for some fixed number $r$ and all $i$. Now the
elements $X_{t_i}$ are all in $[0,c]^{r^2}$ and so there is a
convergent subsequence $X_{a_i}$ whose limit is $X$. Lemma
\ref{mattav} implies that $X$ satisfies the required property.
\end{proof}

Lemma~\ref{limreg} shows that there is a sequence of elements
$X_1,X_2,\dots$ such that $X_i\in C(2^{t_i})$ and $\dist(
C(X_i,2^{i}),C(2^i))\leq 1/2i$. We create another sequence
$Y_1,Y_2,\dots$ of matrices recursively. Let $Y_1=X_1$ and $Y_i\in
C(2^{g_i})$ be a matrix constructed in Lemma~\ref{limbal}
for $Y_{i-1}$ and $X_i$ with $\varepsilon=1/10^i$. Let
$M_i\widehat{\mathcal{K}}(2^{g_{i-1}},2^{g_i})$ be the sequence of
balanced partition matrices (guaranteed by Lemma~\ref{limbal})
for which $\|M_iY_iM_i^T-Y_{i-1}\|_1\leq 1/10^i$. For two natural
numbers $i,j$ with $i>j$ we introduce matrices $Y_{i,j}$ defined by
$$Y_{i,j}=M_{j+1}M_{j+2}\dots M_{i}Y_iM_{i}^T\dots
M_{j+2}^TM_{j+1}^T.$$ An iterated use of Lemma \ref{protav} implies
that
$$\|Y_{i,j}-Y_j\|_1\leq 10^{-j}+10^{-j+1}+\dots+10^{-i-1}<2\cdot 10^{-i-1}.$$
Now we can choose an increasing sequence $d_i$ from the natural
numbers such that $Y_{d_i,j}$ converges to some $\bar{Y}_j$ for all
fixed number $j$. We have that
$$\|\widehat{Y}_{j}-Y_j\|_1< 2\cdot 10^{-j-1}$$
and that for $i>j$
$$\bar{Y}_{j}=M_{j+1}M_{j+2}\dots M_{i}\bar{Y}_iM_{i}^T\dots
M_{j+2}^TM_{j+1}^T.$$ From the construction of the sequence $Y_i$ it
follows that there is a matrix
$N_i\in\widehat{\mathcal{K}}(2^{t_i},2^{g_i})$ with
$\|N_iY_iN_i^T-X_i\|_1< 10^{-i}$. By Lemma~\ref{protav} we get that
$$\|N_i\bar{Y}_iN_i^T-X_i\|_1< 3\cdot 10^{-i}.$$ Using that
$C(\bar{Y}_i,2^i)\supseteq C(N_i\bar{Y}_iN_i^T,2^i)$ and Lemma
\ref{mattav} we obtain that
$$\dist(C(\bar{Y}_i,2^i),C(2^i))< 1/2i+3\cdot 10^{-i}<1/i.$$
Applying Lemma \ref{apcon} it follows that
$$\lim_{n\rightarrow\infty}C(\bar{Y}_n,k)=C(k)$$
for all natural numbers $k$.

Now the construction of the sequence $\bar{Y}_i$ shows that it can
be refined to a consistent sequence of matrices $\bar{T}_i\in
C(2^i)$. Corollary~\ref{tart}~and Lemma~\ref{ketthat} complete the
proof.

\medskip

Now we enter the proof of Theorem \ref{thm:shpm2b}. Let $\mu$ be an s-graphon represented on the cantor set $\mathcal{C}$. Using that $\mu$ arises from a sequence of consistent tables and Lemma \ref{ketthat} we have that $\mu$ is the limit of non-negative symmetric matrices $\{S_i\}_{i=1}^\infty$ with $\gamma(S_i)=1$. Each matrix $S_i$ can be represented by an s-graphon $Q_i$ whose Radon-Nikodym derivative $W_i$ is is a step function. We know from dense graph limit theory \cite{LSz} that $W_i/\|W_i\|_\infty$ is the limit (according to the limit notion for dense graph sequences) of some graph sequence $\{G_j^i\}_{j=1}^\infty$. This implies that a constant multiple of $Q_i$ is the limit of $\{G_j^i\}_{j=1}^\infty$ according to s-convergence. This constant must be $1$ because $Q_i$ is already a probability measure. Using that $\mu$ is the limit of $\{Q_i\}_{i=1}^\infty$ the proof is complete.

\section{Cantor set versus $[0,1]$}\label{ch:cantor}

\begin{Le}\label{le:cantor-lebesgue}
There exists an invertible measure preserving map $\Phi:\mc{C}\to [0,1]$ between the measure spaces $(\mc{C},\ms{B}(\mc{C}),\nu)$ and $([0,1],\ms{B}([0,1]),\lambda)$.
\end{Le}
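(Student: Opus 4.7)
The plan is to start from the natural binary-expansion map and then repair its failure to be injective on a negligible set. Define $\Phi_0:\mc{C}\to[0,1]$ by
\[
\Phi_0(\omega) \;=\; \sum_{n=1}^\infty \omega_n\, 2^{-n}.
\]
This series converges uniformly in $\omega$, so $\Phi_0$ is continuous, hence Borel measurable, and obviously surjective. To see that $\Phi_0$ pushes $\nu$ to $\lambda$, observe that for each node $x$ at level $n$ in the binary tree $B$ the cylinder $S_x$ maps onto a closed dyadic interval of length $2^{-n}$, and these cylinders generate the Borel $\sigma$-algebra on $\mc{C}$. Since $\nu(S_x)=2^{-n}=\lambda(\Phi_0(S_x))$, Lemma~\ref{mertek} (uniqueness of the measure extension from the generating ring) gives $(\Phi_0)_*\nu = \lambda$.

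The only obstruction to $\Phi_0$ being a bijection is that every dyadic rational in $(0,1)$ has two binary expansions. Precisely, let
\[
N \;:=\; \{\omega\in\mc{C} \;:\; \omega \text{ is eventually constant}\}
\]
and let $D\subset[0,1]$ be the set of dyadic rationals together with $\{0,1\}$. Both $N$ and $D$ are countably infinite, hence Borel with $\nu(N)=\lambda(D)=0$, and $\Phi_0$ restricts to a bijection $\mc{C}\setminus N \to [0,1]\setminus D$. Now pick any bijection $\psi:N\to D$ (possible since both sides are countably infinite) and define
\[
\Phi(\omega) \;:=\; \begin{cases} \Phi_0(\omega), & \omega\notin N,\\ \psi(\omega), & \omega\in N. \end{cases}
\]
By construction $\Phi:\mc{C}\to[0,1]$ is a bijection.

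It remains to verify that $\Phi$ and $\Phi^{-1}$ are both Borel measurable and that $\Phi$ preserves measure. Since $N$ is a countable Borel subset of $\mc{C}$, every subset of $N$ is Borel, so the restriction of $\Phi$ to $N$ is automatically Borel measurable; combined with the Borel map $\Phi_0$ on the Borel set $\mc{C}\setminus N$, this shows $\Phi$ is Borel. The identical argument with $D\subset[0,1]$ countable Borel gives Borel measurability of $\Phi^{-1}$. Measure preservation is then immediate: for any $A\in\ms{B}([0,1])$,
\[
\nu(\Phi^{-1}(A)) \;=\; \nu\bigl(\Phi_0^{-1}(A\setminus D)\bigr) + \nu\bigl(\psi^{-1}(A\cap D)\bigr) \;=\; \lambda(A\setminus D) + 0 \;=\; \lambda(A).
\]

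The genuine content is the push-forward computation for $\Phi_0$; everything else is bookkeeping to turn an almost-everywhere bijection into a true bi-measurable one. The one subtlety worth flagging is that one must check Borel measurability of the inverse after modifying on a null set -- this is what forces us to use \emph{Borel} countable sets $N,D$ on which the modification takes place, rather than arbitrary null sets, so that any bijection between them is automatically bi-Borel.
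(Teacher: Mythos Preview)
Your proof is correct and follows the same overall strategy as the paper: construct a measure-preserving map that is a bijection off a countable set, then repair it by an arbitrary bijection between the two countable exceptional sets. The difference is purely in the choice of the underlying map. You use the binary expansion $\omega\mapsto\sum_n\omega_n 2^{-n}$ directly, whereas the paper first embeds $\mc{C}$ into $[0,1]$ as the ternary Cantor set via $\psi$ and then composes with the Cantor distribution function $\varphi_0(x)=\nu^\pi([0,x]\cap\psi(\mc{C}))$. Your route is the more direct of the two, and the verification that the pushforward is Lebesgue is slightly cleaner since cylinders go straight to dyadic intervals; the paper's detour through the ternary picture buys nothing extra here. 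One minor remark: your invocation of Lemma~\ref{mertek} for the identity $(\Phi_0)_*\nu=\lambda$ is really an appeal to uniqueness of measures agreeing on a generating $\pi$-system (the dyadic intervals), not to Lemma~\ref{mertek} itself, which concerns existence; and for the Borel measurability of $\Phi^{-1}$ on $[0,1]\setminus D$ you are implicitly using either Lusin--Souslin or the explicit fact that each binary digit of a non-dyadic real is a Borel function of that real. Both points are routine, but worth stating.
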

\begin{proof}
Let $\psi:\mc{C}\to [0,1]$ be the standard representation of the Cantor set on $[0,1]$, and denote by $B_0\in [0,1]$ the countable set of points whose ternary representation is eventually constant 0 or 2 (in particular $B_0\subset\psi(\mc{C})$). Clearly $\psi$ is an invertible measure preserving map between $\mc{C}$ and $\psi(\mc{C})$, where the latter is equipped with the push-forward measure $\nu^\pi$. Now consider the surjective map $\varphi_0:\psi(\mc{C})\to [0,1]$ given through $\varphi_0(x):=\nu^\pi([0,x]\cap\psi(\mc{C}))$. It can easily be seen that the restriction $\varphi_0|_{\psi(\mc{C})\backslash B_0}$ is a measure preserving bijection.\\
Then the map $\widehat{\Phi}:=\varphi_0|_{\psi(\mc{C})\backslash B_0} \circ \psi^{-1}(\psi(\mc{C})\backslash B_0)$ will be an invertible measure preserving map between $\mc{C}\backslash B$ and $[0,1]\backslash A$, where $B:=\psi^{-1}(B_0)$ and $A:=\varphi_0(B_0)$. Both of the sets $A$ and $B$ are countably infinite, and hence $\widehat{\Phi}$ may be extended to an appropriate $\Phi:\mc{C}\to [0,1]$.
\end{proof}

The above lemma has the following useful consequences.

Through the invertible map $\Phi$ of Lemma \ref{le:cantor-lebesgue} there is a one-to-one correspondence between graphons defined on $[0,1]^2$ and graphons defined on $\mc{C}^2$. Indeed, given a graphon $W:[0,1]^2\to\mb{R}$, we obtain an (almost everywhere defined) graphon $(W\circ\Phi)(x,y):=W(\Phi(x),\Phi(y))$ on $\mc{C}^2$, and vice-versa using $\Phi^{-1}$.
This correspondence preserves the $L^p$-norm of the graphons for all $1\leq p\leq\infty$, and also leaves the cut norm and cut distance invariant.\\
In particular, for any two graphons $W_1,W_2$ on $\mc{C}^2$, we have that
\begin{eqnarray*}
&&\delta_\square(W_1,W_2)\\
&=&\inf\left\{\left.\|W_1-W_2\circ\varphi\|_\square\right|
\varphi:\mc{C}\to\mc{C} \mbox{ is invertible and measure preserving }\right\}
\end{eqnarray*}
and
\begin{eqnarray*}
\|W_1\|_\square&=&\max\left|\left\{\left.\int_{\mc{C}^2} W_1(x,y)f(x)g(y) d\nu\times\nu
\right|
f,g:\mc{C}\to[0,1]\right\}\right|\\
&=&\max\left|\left\{\left.\int_{S\times T} W_1(x,y) d\nu\times\nu
\right|
S,T\in\ms{B}(\mc{C})\right\}\right|
\end{eqnarray*}

\section{$\alpha$-shapes}\label{ch:ash}

\begin{D}
Let $\ms{L}^\infty(\mc{C})$ denote the vector space of bounded measurable functions on $\mc{C}$. For a positive integer $k$, let 
\[
\ms{F}_k^{(1)}:=\left\{ f\in \ms{L}^\infty(\mc{C})^k\left| \sum_{i=1}^k f_i(x)=1 \Forall x\in\mc{C}\right.\right\}.
\]
Given a positive real vector $\alpha\in\mb{R}^k$, let
\[
\mk{F}_\alpha:=\left\{
f\in \ms{F}^{(1)}_k
\left|
\int_{\mc{C}}f_j 
\mr{d}
\lambda=\alpha_j
\mbox{ and }
f_j\geq0  \Forall 1\leq j\leq k
\right.
\right\}.
\]
As a short-hand notation, we shall also use
\[
\mk{F}_k:=\left\{
f\in \ms{F}_k^{(1)}
\left|
\int_{\mc{C}}f_j 
\mr{d}
\lambda=1/k
\mbox{ and }
f_j\geq0 \Forall 1\leq j\leq k
\right.
\right\}
\]
\end{D}

Let $\mc{M}^+_c$ be the space of symmetric Borel measures on $\mc{C}\times\mc{C}$ with $\mu(\mc{C}\times\mc{C})\leq c$. Each symmetric matrix $S$ with $\gamma(S)\leq c$ and $S_{ij}\geq 0$ naturally induces a measure on $[0,1]^2$, and thus through $\Phi^{-1}$ an element $\mu_S\in\mc{M}^+_c$, and we clearly have $C(S,k)=C(\mu_S,k)$. Also, each non-negative symmetric function $W\in L^1(\mc{C}\times\mc{C},\mb{R})$ with $\|W\|_1\leq c$ induces a measure $\mu_W\in\mc{M}^+_c$, and we shall define
$C(W,k):=C(\mu_W,k)$.\\
Further, given a positive integer $k$, let $E_k:=\left\{\alpha\in[0,1]^k\left|\sum_{i=1}^k \alpha_i=1\right.\right\}$. Given a closed set $A\subset [0,1]^k$ with $A\cap E_k\neq\emptyset$, let $\widetilde{C}_0(\mu,A)\subset \mb{R}^{k+k^2}$ denote the set
\[
\left\{
(\alpha,M)\in (A\cap E_k)\times\mb{R}^{k\times k}
\left|
\Exists f\in\mk{F}_\alpha\colon
M_{i,j}=\int f_i\otimes f_j\mr{d}\mu
\right.
\right\},
\]
and $\widetilde{C}(\mu,A)\subset \mb{R}^{k+k^2}$ its closure.

Note that with a small abuse of notation, we shall define
\begin{align*}
\widetilde{C}_0(\mu,\alpha)&:=\left\{
M\in\mb{R}^{k\times k}
\left|
(\alpha,M)\in\widetilde{C}_0(\mu,\{\alpha\})
\right.
\right\},\\
\widetilde{C}(\mu,\alpha)&:=\left\{
M\in\mb{R}^{k\times k}
\left|
(\alpha,M)\in\widetilde{C}(\mu,\{\alpha\})
\right.
\right\}
\end{align*}
for $\alpha\in[0,1]^k\cap E_k$. With this notation, we have
\[
C(\mu,k)=\widetilde{C}\left(\mu,\frac{\vec{1}}{k}\right).
\]

\begin{Le}\label{Le:unif_equi}
For each $m\in\mb{N}^+$ and $\varepsilon\in(0,1/m]$, the functions 
\[
\widetilde{C}(\mu,\cdot):(\varepsilon,1]^m\cap E_m\to\mc{K}(\mb{R}^{m\times m})
\]
given by $\alpha\mapsto \widetilde{C}(\mu,\alpha)$ are uniformly equicontinuous on $\mc{M}^+_c$. In fact, they are Lipschitz continuous with constant $2c(m-1)/\varepsilon$.
\end{Le}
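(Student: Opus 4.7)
The plan is to establish the Lipschitz bound by an explicit transport construction on the witness functions. Given $\alpha,\alpha' \in (\varepsilon,1]^m \cap E_m$, $M \in \widetilde{C}_0(\mu,\alpha)$, and a witness $f = (f_1,\dots,f_m) \in \mk{F}_\alpha$ with $\mc{M}(f)=M$, I will build a companion witness $g = (g_1,\dots,g_m) \in \mk{F}_{\alpha'}$ whose associated matrix $M' := \mc{M}(g)$ satisfies $\|M-M'\|_1 \leq K\|\alpha-\alpha'\|_1$ for an appropriate constant $K$, and then exploit the symmetry of the construction to obtain the Hausdorff bound. Fix an optimal transport plan $(t_{ij})_{1 \leq i,j \leq m}$ between $\alpha$ and $\alpha'$, that is, with $t_{ij} \geq 0$, $\sum_j t_{ij} = (\alpha_i-\alpha'_i)_+$ and $\sum_j t_{ji} = (\alpha'_i-\alpha_i)_+$, so that for each $i$ at most one of these directional sums is nonzero. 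Define
\[
g_i := f_i - \sum_{j=1}^m \frac{t_{ij}}{\alpha_i}\,f_i + \sum_{j=1}^m \frac{t_{ji}}{\alpha_j}\,f_j.
\]
A direct verification shows $g_i \geq 0$ (since $\sum_j t_{ij} \leq \alpha_i$), $\sum_i g_i \equiv 1$ (the two transport sums cancel after an index swap), and $\int g_i \,\mr{d}\nu = \alpha_i - \sum_j t_{ij} + \sum_j t_{ji} = \alpha'_i$, so $g \in \mk{F}_{\alpha'}$ and $M' \in \widetilde{C}_0(\mu,\alpha')$.

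The key estimate has two parts. First, since each $f_k \leq 1$ and $\alpha_i,\alpha_j > \varepsilon$, one immediately gets
\[
\|g_i-f_i\|_\infty \leq \frac{1}{\varepsilon}\Bigl(\sum_j t_{ij} + \sum_j t_{ji}\Bigr) = \frac{|\alpha_i-\alpha'_i|}{\varepsilon}.
\]
Second, decomposing $g_i g_j - f_i f_j = (g_i-f_i)g_j + f_i(g_j-f_j)$ and using $\sum_j g_j \equiv 1$ and $\sum_i f_i \equiv 1$, one obtains
\[
\sum_{i,j}|M'_{ij}-M_{ij}| \leq \sum_{i,j}\int_{\mc{C}^2} |g_i g_j - f_i f_j|\,\mr{d}\mu \leq 2\sum_i \int_{\mc{C}}|g_i-f_i|\,\mr{d}\mu^1 \leq 2c\sum_i \|g_i-f_i\|_\infty,
\]
where $\mu^1$ is the common marginal of the symmetric measure $\mu$, with $\mu^1(\mc{C})=\mu(\mc{C}^2) \leq c$. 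Combining the two bounds gives $\|M-M'\|_1 \leq (2c/\varepsilon)\|\alpha-\alpha'\|_1$, comfortably within the announced constant $2c(m-1)/\varepsilon$ for $m \geq 2$.

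To upgrade from $\widetilde{C}_0$ to the closure $\widetilde{C}$, given $M \in \widetilde{C}(\mu,\alpha)$ I approximate it by a sequence $M_n \in \widetilde{C}_0(\mu,\alpha)$, apply the construction above to obtain $M'_n \in \widetilde{C}_0(\mu,\alpha') \subseteq \widetilde{C}(\mu,\alpha')$ with $\|M_n-M'_n\|_1 \leq (2c/\varepsilon)\|\alpha-\alpha'\|_1$, and extract a convergent subsequence $M'_n \to M' \in \widetilde{C}(\mu,\alpha')$ using the compactness of the ambient box $[0,c]^{m\times m}$. Interchanging the roles of $\alpha$ and $\alpha'$ gives the matching bound in the other direction, yielding the desired Lipschitz estimate uniformly in $\mu \in \mc{M}_c^+$. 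The main subtlety is engineering the transport so that all three constraints on $g$ -- non-negativity, partition of unity, and the prescribed integrals $\alpha'_i$ -- are maintained simultaneously while the $\ell_\infty$-perturbation of each $g_i$ stays small; the proportional transfer using the $f_i$ themselves as ``carriers'' achieves this in one step, and it is precisely the lower bound $\varepsilon$ on the $\alpha_i$ that keeps the denominators $\alpha_i$ under control.
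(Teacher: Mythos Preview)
Your proof is correct and follows the same overall strategy as the paper: given a witness $f\in\mk{F}_\alpha$, build a nearby $g\in\mk{F}_{\alpha'}$ with each $\|f_i-g_i\|_\infty$ small, then deduce a bound on the matrix difference. The paper simply asserts the existence of such a $g$ with $\|f_i-g_i\|_\infty\leq (m-1)\|\alpha-\alpha'\|_\infty/\varepsilon$ and bounds each entry via $\|f_i\otimes f_j-g_i\otimes g_j\|_\infty\cdot\mu(\mc{C}^2)$, obtaining an $\ell_\infty$--$\ell_\infty$ Lipschitz constant. Your explicit transport construction is a genuine addition (the paper gives none) and in fact yields the sharper per-coordinate estimate $\|f_i-g_i\|_\infty\leq|\alpha_i-\alpha'_i|/\varepsilon$; combined with your summation trick---collapsing $\sum_{i,j}|g_ig_j-f_if_j|$ via $\sum_j g_j\equiv 1$ and $\sum_i f_i\equiv 1$ to a marginal integral---you obtain the $\ell_1$--$\ell_1$ constant $2c/\varepsilon$, tighter than the stated $2c(m-1)/\varepsilon$. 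You also handle the passage from $\widetilde{C}_0$ to the closure $\widetilde{C}$ explicitly, which the paper leaves implicit. One cosmetic remark: the word ``optimal'' for your transport plan is unnecessary---any nonnegative $(t_{ij})$ with the prescribed row and column sums works, and optimality is never used.
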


\begin{proof}
Fix an $\eta\in (0,\varepsilon/m)$ and consider $\alpha,\beta\in[\varepsilon,1]^m\cap E_m$ such that $\|\alpha-\beta\|_\infty\leq\eta$. Given any $f\in\mk{F}_\alpha$, there always exists a $g\in\mk{F}_\beta$ such that for each $1\leq i\leq m$ we have $\|f_i-g_i\|_\infty\leq(m-1)\eta/\varepsilon$. Then we obtain for all $1\leq i,j\leq m$ and $\mu\in\mc{M}^+_c$ that
\begin{align*}
\left|
M(\mu,f)_{i,j}-M(\mu,g)_{i,j}
\right|&=
\left|
\int f_i\otimes f_j\mr{d}\mu-\int g_i\otimes g_j\mr{d}\mu
\right|\\
&\leq
\|f_i\otimes f_j-g_i\otimes g_j\|_\infty\cdot\|\mu\|\leq 2(m-1)\eta/\varepsilon \cdot c.
\end{align*}
Consequently, $\widetilde{C}(\mu,\alpha)$ and $\widetilde{C}(\mu,\beta)$ have an $\ell_\infty$-Hausdorff distance of at most $2c(m-1)\eta/\varepsilon$ whenever $\|\alpha-\beta\|_\infty\leq\eta$, independently of $\mu$, and the claimed Lipschitz continuity follows.
\end{proof}

As an easy consequence of this uniform Lipschitz continuity, we have the following result.
\begin{Cor}\label{Cor:cont}
For any sequence $(\mu_n)_{\mb{N}^+}\subset \mc{M}^+_c$, size $k\in\mb{N}^+$ and compact set $A\subset(0,1]^k$, the convergence of $\widetilde{C}(\mu_n,A)$ implies the convergence of any section sequence $\widetilde{C}(\mu_n,\alpha)$ where $\alpha\in A\cap E_k$.
\end{Cor}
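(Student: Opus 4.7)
The plan is to show that for each fixed $\alpha_0\in A\cap E_k$, the sequence $(\widetilde{C}(\mu_n,\alpha_0))_n$ is Cauchy in the Hausdorff metric on $\mathcal{K}([0,c]^{k\times k})$. Since this space is compact (hence complete), Cauchy implies convergent, and this directly gives the claim. So the task reduces to bounding $\mathrm{dist}_H(\widetilde{C}(\mu_n,\alpha_0),\widetilde{C}(\mu_m,\alpha_0))$ in terms of $\mathrm{dist}_H(\widetilde{C}(\mu_n,A),\widetilde{C}(\mu_m,A))$.

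First I would extract a uniform positive lower bound: since $A\subset(0,1]^k$ is compact, there is some $\varepsilon>0$ with $A\subset[\varepsilon,1]^k$. Lemma \ref{Le:unif_equi} then yields a single Lipschitz constant $L:=2c(k-1)/\varepsilon$ for the maps $\alpha\mapsto\widetilde{C}(\mu_n,\alpha)$, valid uniformly in $n$. I would also observe that each set $\widetilde{C}(\mu_n,A)$ is contained in $(A\cap E_k)\times[0,c]^{k\times k}$ (because $A\cap E_k$ is closed and the matrix entries are bounded by the total mass), so the first coordinates of any approximating point automatically lie in $A\cap E_k$.

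The core estimate then runs as follows. Pick $\delta>0$ and an $N$ beyond which consecutive sets $\widetilde{C}(\mu_n,A)$ are pairwise within Hausdorff distance $\delta$ of the limit (and hence within $2\delta$ of each other). For any $n,m\geq N$ and any $M\in\widetilde{C}(\mu_n,\alpha_0)$, the pair $(\alpha_0,M)$ belongs to $\widetilde{C}(\mu_n,A)$, so there exists $(\gamma,M')\in\widetilde{C}(\mu_m,A)$ with $\|\gamma-\alpha_0\|_\infty\leq 2\delta$ and $\|M'-M\|_\infty\leq 2\delta$. Since $\gamma\in A\cap E_k\subset[\varepsilon,1]^k$, and $M'\in\widetilde{C}(\mu_m,\gamma)$, Lemma \ref{Le:unif_equi} supplies an $M''\in\widetilde{C}(\mu_m,\alpha_0)$ with $\|M''-M'\|_\infty\leq L\cdot 2\delta$. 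The triangle inequality gives $\|M-M''\|_\infty\leq 2(1+L)\delta$. The reverse inclusion is identical, so $\mathrm{dist}_H(\widetilde{C}(\mu_n,\alpha_0),\widetilde{C}(\mu_m,\alpha_0))\leq 2(1+L)\delta$, establishing the Cauchy property.

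There is no genuine obstacle here beyond correctly tracking the $\alpha$-perturbations: the only care needed is to verify that the intermediate point $\gamma$ stays in the domain $[\varepsilon,1]^k\cap E_k$ on which the uniform Lipschitz bound from Lemma \ref{Le:unif_equi} applies, and this is guaranteed automatically by the inclusion $\widetilde{C}(\mu_m,A)\subset(A\cap E_k)\times[0,c]^{k\times k}$. Non-emptiness of $\widetilde{C}(\mu_n,\alpha_0)$, needed to make the Hausdorff metric well defined, follows by taking any partition of $\mathcal{C}$ into Borel sets of measures $(\alpha_0)_j$ and using their indicator functions as a witness family in $\mathfrak{F}_{\alpha_0}$.
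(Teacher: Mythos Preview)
Your argument is correct and is exactly the approach the paper intends: the paper states only that the corollary is an easy consequence of the uniform Lipschitz continuity in Lemma~\ref{Le:unif_equi}, and you have supplied precisely those details. The one step worth making explicit is that $(\gamma,M')\in\widetilde{C}(\mu_m,A)$ does not give $M'\in\widetilde{C}(\mu_m,\gamma)$ by definition alone (closure of a union of slices versus union of the closed slices), but this follows immediately by applying the same Lipschitz estimate to an approximating sequence $(\alpha_i,M_i)\in\widetilde{C}_0(\mu_m,A)$ with $\alpha_i\to\gamma$.
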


The above results exclude the cases when any coordinate of $\alpha$ is zero, so our next aim is to investigate what results still hold if we do allow for degenerate families of functions.
Defining the set
\[
\widehat{C}(\mu,k):=\overline
{\bigcup_{\varepsilon\in\left(0,\frac{1}{k}\right]} \widetilde{C}(\mu,[\varepsilon,1]^k)
},
\]
the question is how this compact set relates to $\widetilde{C}(\mu, [0,1]^k)$ and to the sets $\left(\beta,\widetilde{C}(\mu,\beta)\right)$ where some coordinate of $\beta$ is zero.

\begin{A}\label{Prop:zero}
For any $\mu\in\mc{M}^+_c$, we have
\[
\widetilde{C}(\mu, [0,1]^k)= \widehat{C}(\mu,k).
\]
If $\mu$ is absolutely continuous with respect to $\nu\times\nu$, we actually have
\[
\widetilde{C}(\mu, [0,1]^k)=\bigcup_{\alpha\in[0,1]^k, \sum_{i=1}^k\alpha_i=1}\left(\alpha,\widetilde{C}(\mu,\alpha)\right).
\]
\end{A}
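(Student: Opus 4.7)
My plan is to handle the two identities separately: the first via an explicit perturbation that nudges degenerate coordinates of $\alpha$ away from zero, and the second via a weak-$*$ compactness argument that makes essential use of absolute continuity.

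For the first identity, the inclusion $\widehat{C}(\mu,k)\subseteq\widetilde{C}(\mu,[0,1]^k)$ is immediate, since each $\widetilde{C}(\mu,[\varepsilon,1]^k)$ sits inside the closed set $\widetilde{C}(\mu,[0,1]^k)$. For the reverse inclusion, since $\widehat{C}(\mu,k)$ is closed by construction, it suffices to show $\widetilde{C}_0(\mu,[0,1]^k)\subseteq\widehat{C}(\mu,k)$. Given $(\alpha,M)\in\widetilde{C}_0(\mu,[0,1]^k)$ with witness $f\in\mk{F}_\alpha$, I write $I:=\{i\colon\alpha_i=0\}$, a proper subset of $\{1,\ldots,k\}$ since $\sum_i\alpha_i=1$, and note that $f_i=0$ $\nu$-almost everywhere for $i\in I$. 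For small $\eta>0$ I would then define $g^\eta_i:=f_i+\eta\sum_{j\notin I}f_j$ for $i\in I$ and $g^\eta_j:=(1-|I|\eta)f_j$ for $j\notin I$. A direct check shows $g^\eta\in\mk{F}_{\alpha^\eta}$ with $\alpha^\eta_i=\eta$ for $i\in I$ and $\alpha^\eta_j=(1-|I|\eta)\alpha_j$ for $j\notin I$, so $\alpha^\eta\in(0,1]^k$ and $\alpha^\eta\to\alpha$. Moreover $g^\eta\to f$ pointwise on $\mathcal{C}$, and since $g^\eta_i(x)g^\eta_j(y)$ stays uniformly bounded by $1$ and $\mu(\mathcal{C}^2)\leq c<\infty$, dominated convergence yields $M^\eta_{i,j}:=\int g^\eta_i\otimes g^\eta_j\,d\mu\to\int f_i\otimes f_j\,d\mu=M_{i,j}$. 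Hence $(\alpha^\eta,M^\eta)\in\widetilde{C}_0(\mu,[\eta,1]^k)\subseteq\widehat{C}(\mu,k)$ converges to $(\alpha,M)$, which therefore lies in $\widehat{C}(\mu,k)$. The important point here is that writing $g^\eta_i$ as $f_i+\eta\sum_{j\notin I}f_j$ (rather than simply replacing $f_i$ by the constant $\eta$) preserves the values of $f_i$ on every $\nu$-null set where $\mu$ might concentrate, which is what allows $M^\eta$ to actually track $M$.

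For the second identity, one inclusion is tautological. For the other, I would fix $(\alpha,M)\in\widetilde{C}(\mu,[0,1]^k)$ and pick an approximating sequence $(\alpha^{(n)},M^{(n)})\in\widetilde{C}_0(\mu,[0,1]^k)$ with witnesses $f^{(n)}\in\mk{F}_{\alpha^{(n)}}$. Since $0\leq f^{(n)}_i\leq 1$, Banach--Alaoglu yields a subsequence along which $f^{(n)}_i\rightharpoonup^{*} f_i$ in $L^\infty(\nu)$ and $f^{(n)}_i\otimes f^{(n)}_j\rightharpoonup^{*} h_{i,j}$ in $L^\infty(\nu\times\nu)$ for every $i$ and every pair $(i,j)$. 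Exactly as in the second part of the lemma in Chapter~\ref{ch:bp}, the limits satisfy $f_i\geq 0$, $\sum_i f_i=1$, $\int f_i\,d\nu=\alpha_i$, and $h_{i,j}(x,y)=f_i(x)f_j(y)$ $\nu\times\nu$-almost everywhere, so $f\in\mk{F}_\alpha$. The decisive step, and the main obstacle, is converting these weak limits into the value of $M_{i,j}$: writing $d\mu=W\,d\nu\times\nu$ with $W\in L^1(\nu\times\nu)$, one has $M^{(n)}_{i,j}=\int f^{(n)}_i\otimes f^{(n)}_j\cdot W\,d\nu\times\nu$, and passing to the limit via the duality between $L^\infty(\nu\times\nu)$ and $L^1(\nu\times\nu)$ yields $M_{i,j}=\int h_{i,j}\cdot W\,d\nu\times\nu=\int f_i\otimes f_j\,d\mu$, so $M\in\widetilde{C}_0(\mu,\alpha)\subseteq\widetilde{C}(\mu,\alpha)$. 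Without absolute continuity this last step genuinely fails — pairing against a singular $\mu$ is not weak-$*$ continuous in $L^\infty$ — which is precisely why only the weaker compactification identity of the first part is available in general.
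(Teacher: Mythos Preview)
Your proof is correct. For the first identity, both you and the paper argue by perturbing a witness $f\in\mk{F}_\beta$ (with some $\beta_\ell=0$) into a nearby $g\in\mk{F}_\alpha$ with $\alpha\in(0,1]^k$; the paper uses the uniform shrink-and-shift $g_i=(1-\tfrac{\varepsilon^2}{2c})f_i+\tfrac{\varepsilon^2}{2ck}$ together with explicit estimates, while you redistribute mass only from the nonzero coordinates and invoke dominated convergence. These are minor variations of the same idea. (One small wording issue: $(\alpha^\eta,M^\eta)\in\widetilde C_0(\mu,[\eta,1]^k)$ only once $\eta$ is small enough that $(1-|I|\eta)\min_{j\notin I}\alpha_j\geq\eta$; this is harmless since you take $\eta\to0$.)

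For the second identity your route genuinely differs from the paper's. The paper, given $(\beta,M)\in\widehat C(\mu,k)$ with $\beta$ degenerate, picks a nearby nondegenerate $\alpha$ and witness $f\in\mk{F}_\alpha$, then \emph{explicitly constructs} $g\in\mk{F}_\beta$ by zeroing the $B$-coordinates and redistributing their mass, estimating $\|M(\mu,g)-M\|_\infty$ by hand; absolute continuity enters only to force $M(\mu,g)_{i,j}=0$ when $i\in B$ or $j\in B$. You instead run the weak-$*$ compactness argument from the lemma in Section~\ref{ch:bp} verbatim, now with $\alpha^{(n)}\to\alpha$ allowed to vary, and use the Radon--Nikodym derivative $W\in L^1$ to identify the limit of $\int f^{(n)}_i\otimes f^{(n)}_j\,d\mu$. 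This is cleaner and treats all $\alpha$ at once, at the cost of the usual technicality that the weak-$*$ limits live in $L^\infty(\nu)$ rather than $\ms{L}^\infty(\mc{C})$, so one must choose pointwise representatives satisfying $f_i\geq0$ and $\sum_i f_i=1$ everywhere (e.g.\ redefining $f_i:=1/k$ on the $\nu$-null exceptional set); absolute continuity of $\mu$ makes this harmless. The paper's construction is more elementary and quantitative; yours is more conceptual and makes transparent why absolute continuity is exactly the hypothesis needed.
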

\begin{proof}
For the first part it is enough to show that if $\beta\in[0,1]^k$, $\sum_{i=1}^k \beta_i=1$, and $\beta_\ell=0$ for some $1\leq \ell\leq k$, then for any family of functions $f\in\mk{F}_\beta$ and small enough $\varepsilon>0$ we can find a family of functions $g\in\mk{F}_\alpha$ with $\alpha\in(0,1]^k$ and $\sum_{i=1}^k \alpha_i=1$ such that
\[
\|M(\mu,f)_{i,j}-M(\mu,g)_{i,j}\|_\infty<c\varepsilon.
\]
Suppose $\varepsilon<\min(1,c/4)$ and define the functions
\[
g_i:=\left(1-\frac{\varepsilon^2}{2c}\right)f_i+\frac{\varepsilon^2}{2ck}.
\]
Clearly each $g_i$ is positive, their sum is the constant $1$ function, and thus they form a family $g\in\mk{F}_\alpha$ for some appropriate $\alpha$ with no zero entries. It remains to be shown that the corresponding matrix is close enough to the one pertaining to the family $f$.

Given indices $1\leq i,j\leq k$ we have
\begin{align*}
&\left|
M(\mu,f)_{i,j}-M(\mu,g)_{i,j}
\right|=
\left|
\int f_i\otimes f_j\mr{d}\mu-\int g_i\otimes g_j\mr{d}\mu
\right|\\
={}&
\left|
\int
\left(
f_i\otimes f_j-
\left(
\left(1-\frac{\varepsilon^2}{2c}\right)f_i+\frac{\varepsilon^2}{2ck}
\right)
\otimes
\left(
\left(1-\frac{\varepsilon^2}{2c}\right)f_j+\frac{\varepsilon^2}{2ck}
\right)
\right)
\mr{d}\mu
\right|\\
={}&
\left|
\int
\left(
\left(\frac{\varepsilon^2}{c}-
\frac{\varepsilon^4}{4c^2}\right)(f_i\otimes f_j)-
\left(1-\frac{\varepsilon^2}{2c}\right)\frac{\varepsilon^2}{2ck}
\left(f_i\otimes 1\right)\right.\right.\\
&-
\left.\left.
\left(1-\frac{\varepsilon^2}{2c}\right)\frac{\varepsilon^2}{2ck}
(1\otimes f_j)
+\frac{\varepsilon^4}{4c^2k^2}
1\otimes 1
\right)
\mr{d}\mu
\right|\leq \varepsilon^2+\frac{2\varepsilon^2}{2k}+\frac{\varepsilon^4}{4ck^2}<
c\varepsilon,
\end{align*}
and we are done.\\
For the second part, we wish to show that if $\mu$ is absolutely continuous with respect to $\nu\times\nu$, then for any $\beta\in[0,1]^k\cap E_k$ with $\beta_\ell=0$ for some $1\leq \ell\leq k$, whenever $(\beta,M)\in \widehat{C}(\mu,k)$, then also $M\in\widetilde{C}(\mu,\beta)$.
Fix an $\varepsilon\in(0,1)$, and consider the set $B:=\left\{1\leq \ell\leq k|\beta_\ell=0\right\}$.
Then we may find an $\alpha\in(0,1]^k\cap E_k$ and
a family $f\in\mk{F}_\alpha$ such that
 $\|M-M(\mu,f)\|_\infty\leq \varepsilon$ and $0<\alpha_j(1-\varepsilon/ck)<\beta_j$ for all $j\notin B$.\\
For $j\notin B$ let now $x_j:=\max\{0,(\alpha_j-\beta_j)/\alpha_j\}<\varepsilon/ck$ and $y_j:=\max\{0,\beta_j-\alpha_j\}$.
Consider the following family of functions.
\begin{align*}
g_i &:=0 & \mbox{ if } i\in B;\\
g_i &:=(1-x_i) f_i+\frac{y_i}{\sum_{\ell\notin B}y_\ell}\sum_{b\in B} x_b f_b
 & \mbox{ if } i\notin B.
\end{align*}
Then we have $g\in\mk{F}_\beta$ and it remains to be shown that $M(\mu,g)$ is close to $M$. Note that by absolute continuity of $\mu$ we obtain $M(\mu,g)_{i,j}=M_{i,j}=0$ whenever at least one of $i$ and $j$ are in $B$. Let us therefore now assume $i,j\notin B$,  in which case we have
\[
(1-\varepsilon/ck) f_i\leq g_i \leq f_i+\varepsilon/c
\]
yielding
\begin{align*}
&M(\mu,g)_{i,j}-M(\mu,f)_{i,j}=
\int g_i\otimes g_j -f_i\otimes f_j \mr{d}\mu
\\
\geq{}&
\left(\left(1-\frac{\varepsilon}{ck}\right)^2-1\right)
M(\mu,f)_{i,j}\geq -\frac{2\varepsilon}{k}
\end{align*}
and
\begin{align*}
&M(\mu,g)_{i,j}-M(\mu,f)_{i,j}=
\int g_i\otimes g_j -f_i\otimes f_j \mr{d}\mu\\
\leq{}&
\frac{\varepsilon^2}{c}
+\frac{\varepsilon}{c}
\int
\left(
f_i\otimes 1
 +1\otimes f_j 
\right)
 \mr{d}\mu
\leq \frac{\varepsilon^2}{c}+\varepsilon.
\end{align*}
Thus if $\varepsilon<c$, we have $|M(\mu,g)_{i,j}-M_{i,j}|\leq 2\varepsilon$ whenever $i,j\notin B$, and so $\|M(\mu,g)-M\|_\infty\leq 2\varepsilon$, meaning that indeed $M\in\widetilde{C}(\mu,\beta)$.
\end{proof}

\begin{Le}
Given any measure $\mu\in\mc{M}^+_c$, positive integers $k<m$, a vector $\beta\in E_m$ with exactly $m-k$ vanishing coordinates and $\beta_0\in E_k$ denoting the vector obtained from this $\beta$ by erasing the $0$ coordinates, we have that $\widetilde{C}(\mu,\beta_0)$ can be obtained from
\[
\widehat{K}:=\left\{(\beta,M)\in\widehat{C}(\mu,m)\left|\beta_i\beta_j=0\Rightarrow M_{i,j}=0\right.\right\}
\]
by deleting the rows and columns pertaining to $0$ coordinates of $\beta$. The same can be achieved starting from the set
\[
\widetilde{K}:=\left\{(\beta,M)\in\widetilde{C}(\mu,[0,1]^m)\left|\beta_i\beta_j=0\Rightarrow M_{i,j}=0\right.\right\}.
\]
\end{Le}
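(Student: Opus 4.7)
My plan begins with the observation that, by Proposition~\ref{Prop:zero}, we have $\widehat{C}(\mu,m)=\widetilde{C}(\mu,[0,1]^m)$, hence $\widehat{K}=\widetilde{K}$; it thus suffices to establish the claim for $\widetilde{K}$. Write $S:=\{i:\beta_i=0\}$ and $T:=\{1,\dots,m\}\setminus S$, and for an $m\times m$ matrix $M$ vanishing outside $T\times T$, let $M_0$ denote the $k\times k$ submatrix indexed by $T$ (identified via a fixed bijection $T\leftrightarrow\{1,\dots,k\}$, so that $\beta$ reduces to $\beta_0$). The claim is then that $\{M_0:(\beta,M)\in\widetilde{K}\}=\widetilde{C}(\mu,\beta_0)$.

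For the inclusion $\widetilde{C}(\mu,\beta_0)\subseteq\{M_0:(\beta,M)\in\widetilde{K}\}$, take $N\in\widetilde{C}(\mu,\beta_0)$ with approximating witnesses $f^{(n)}\in\mk{F}_{\beta_0}$ such that $\mc{M}(f^{(n)})\to N$. Extend each family to an $m$-family $g^{(n)}$ by placing the $f^{(n)}_i$ at $T$-positions and the zero function at $S$-positions. Then $g^{(n)}\in\mk{F}_\beta$, and $\mc{M}(g^{(n)})$ vanishes outside $T\times T$ with its $T\times T$-block equal to $\mc{M}(f^{(n)})$. Passing to the limit yields an element $(\beta,M)\in\widetilde{C}(\mu,[0,1]^m)$ satisfying the zero condition, whence $(\beta,M)\in\widetilde{K}$ with $M_0=N$.

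For the reverse inclusion, pick $(\beta,M)\in\widetilde{K}$ with an approximating sequence $(\alpha^{(n)},M^{(n)})\in\widetilde{C}_0(\mu,[0,1]^m)$ witnessed by families $f^{(n)}\in\mk{F}_{\alpha^{(n)}}$. The key construction is to merge the $S$-components into the $T$-components: for $i\in T$ set
\[
g^{(n)}_i:=f^{(n)}_i+\frac{1}{k}\sum_{s\in S}f^{(n)}_s.
\]
Then $(g^{(n)}_i)_{i\in T}$ is nonnegative, sums to $1$, and $\int g^{(n)}_i\,d\nu\to\beta_i$, since $\alpha^{(n)}_s\to 0$ for $s\in S$. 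Expanding $\int g^{(n)}_i(x)g^{(n)}_j(y)\,d\mu$ yields $M^{(n)}_{i,j}$ plus cross terms of the form $\frac{1}{k}M^{(n)}_{i,s}$, $\frac{1}{k}M^{(n)}_{s,j}$, and $\frac{1}{k^2}M^{(n)}_{s,s'}$ with $s,s'\in S$. Crucially, each such cross term tends to the corresponding entry of $M$, which vanishes by the zero property of $M$; hence $\mc{M}(g^{(n)})$ converges to $M_0$ in $\mb{R}^{k\times k}$.

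The residual subtlety is that $\gamma^{(n)}:=(\int g^{(n)}_i\,d\nu)_{i\in T}\to\beta_0$ need not equal $\beta_0$ exactly. Since $\beta_0$ has all coordinates strictly positive, for large $n$ both $\gamma^{(n)}$ and $\beta_0$ lie in some $[\varepsilon,1]^k\cap E_k$ with $\varepsilon>0$, and the redistribution procedure from the proof of Lemma~\ref{Le:unif_equi} provides a nearby family $\widetilde{g}^{(n)}\in\mk{F}_{\beta_0}$ with $\|g^{(n)}_i-\widetilde{g}^{(n)}_i\|_\infty\to 0$, whence $\mc{M}(\widetilde{g}^{(n)})\to M_0$ and $M_0\in\widetilde{C}(\mu,\beta_0)$. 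The main technical point of the argument lies in the third paragraph: it is precisely the zero property $\beta_i\beta_j=0\Rightarrow M_{i,j}=0$ that forces the vanishing of the cross terms generated by the merging construction, ensuring that the merged $k$-family captures the information in the $T\times T$ block of $M$ in the limit.
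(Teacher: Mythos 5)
Your proof is correct and follows the same route that the paper's one-line proof sketches (a reference back to the construction in Proposition~\ref{Prop:zero}): in both directions one reduces to witness families that vanish identically at the zero coordinates of $\beta$, the easy direction being extension by zero functions and the substantive direction being the merging of the degenerate components into the others. Your explicit treatment of the cross terms via the defining condition $\beta_i\beta_j=0\Rightarrow M_{i,j}=0$ is precisely the detail the paper leaves implicit, and it is the right way to do it, since the pointwise bounds used in Proposition~\ref{Prop:zero} do not transfer directly here: the merged-in functions $f^{(n)}_s$ only have small integral, not small sup-norm or small merging coefficients.
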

\begin{proof}
Just as in the proof of Proposition \ref{Prop:zero}, it can be shown that any element of $\widehat{K}$ or $\widetilde{K}$ can be obtained as the limit of points coming from families $f\in\mk{F}_{\beta}$ with $\beta_i=0\Rightarrow f_i=0$.
\end{proof}

In other words, if we allow degenerating families of functions in our shapes, then lower dimensional shapes are retrievable from higher dimensional ones.

Different use of shapes to define convergence of measures may lead to different topologies, but the following result shows that in many cases, we obtain equivalent notions of convergence.

\begin{T}\label{Thm:conv_equiv}
Let $(\mu_n)_{\mb{N}^+}\subset \mc{M}^+_c$. The following are equivalent:
\begin{enumerate}
\item $C(\mu_n,k)$ converges for every $k\in\mb{N}^+$;
\item $\widetilde{C}(\mu_n,[\varepsilon(k),1]^k)$ converges for every $k\in\mb{N}^+$ for one/all $0<\varepsilon(k)\leq 1/k$;
\item $\widetilde{C}(\mu_n,\alpha)$ converges for every $m\in\mb{N}^+$ and $\alpha\in(0,1]^m\cap E_m$.
\end{enumerate}

\end{T}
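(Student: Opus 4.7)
My plan is to establish the cycle $(3)\Rightarrow(2)\Rightarrow(1)\Rightarrow(3)$, interpreting $(2)$ as holding for every $\varepsilon(k)\in(0,1/k]$; the equivalence between the ``one'' and ``all'' versions of $(2)$ then follows automatically from the cycle.

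The easy half is short. For $(2)\Rightarrow(1)$ observe that $\vec{1}/k\in[\varepsilon(k),1]^k\cap E_k$, so Corollary~\ref{Cor:cont} applied to $A=[\varepsilon(k),1]^k$ directly yields convergence of the section $\widetilde{C}(\mu_n,\vec{1}/k)=C(\mu_n,k)$. For $(3)\Rightarrow(2)$, fix $k$ and $\varepsilon=\varepsilon(k)$; the set $K:=[\varepsilon,1]^k\cap E_k$ is compact, so I choose a finite $\eta$-net $\{\alpha^{(1)},\dots,\alpha^{(L)}\}\subset K$. By hypothesis each section $\widetilde{C}(\mu_n,\alpha^{(i)})$ converges, and Lemma~\ref{Le:unif_equi} provides a Lipschitz constant $2c(k-1)/\varepsilon$ that is uniform in $n$ and interpolates the sections between net points. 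A standard equicontinuity argument then promotes convergence on the net to Hausdorff convergence of the whole union $\widetilde{C}(\mu_n,[\varepsilon,1]^k)=\bigcup_{\alpha\in K}\{\alpha\}\times\widetilde{C}(\mu_n,\alpha)$ in $\mb{R}^{k+k^2}$.

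The substantive direction is $(1)\Rightarrow(3)$. I first handle rational $\alpha=(p_1/N,\dots,p_m/N)\in(0,1]^m\cap E_m$. Fix a partition $[N]=S_1\sqcup\dots\sqcup S_m$ with $|S_i|=p_i$ and let $\Pi$ be the $m\times N$ incidence matrix $\Pi_{ij}=\mf{1}_{j\in S_i}$. The key claim is that
\[
\widetilde{C}(\mu,\alpha)=\Pi\,C(\mu,N)\,\Pi^{T}.
\]
The inclusion ``$\supseteq$'' uses the fact that for any $f\in\mk{F}_N$ the functions $g_i:=\sum_{j\in S_i}f_j$ form an element of $\mk{F}_\alpha$ with $M(\mu,g)=\Pi M(\mu,f)\Pi^T$; continuity of $X\mapsto\Pi X\Pi^T$ then permits passage to closures. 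For ``$\subseteq$'', given $g\in\mk{F}_\alpha$, one splits each $g_i$ into $p_i$ non-negative summands $f_j$ $(j\in S_i)$, each of $\nu$-integral $1/N$ and summing back to $g_i$; this is possible because $g_i\cdot\nu$ is a finite non-atomic measure on $\mc{C}$ and admits equitable decompositions via Lyapunov's theorem (equivalently, realise $(\mc{C},g_i\cdot\nu)$ as a Lebesgue space isomorphic to $[0,p_i/N]$ and slice evenly). Thus $\widetilde{C}(\mu_n,\alpha)$ is the image of $C(\mu_n,N)$ under a fixed continuous linear map, and $(1)$ forces its convergence.

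For arbitrary $\alpha\in(0,1]^m\cap E_m$, set $\varepsilon:=\tfrac12\min_j\alpha_j>0$ and pick a rational $\alpha'\in[\varepsilon,1]^m\cap E_m$ with $\|\alpha-\alpha'\|_\infty$ as small as desired. By the rational case $\widetilde{C}(\mu_n,\alpha')$ converges, and Lemma~\ref{Le:unif_equi} bounds $\dist(\widetilde{C}(\mu_n,\alpha),\widetilde{C}(\mu_n,\alpha'))$ by $2c(m-1)\|\alpha-\alpha'\|_\infty/\varepsilon$ uniformly in $n$. A Cauchy argument then delivers convergence of $\widetilde{C}(\mu_n,\alpha)$. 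The main technical point to watch is the measure-theoretic splitting in the rational case, which is where the non-atomicity of $\nu$ on $\mc{C}$ is essential; the rest of the argument is a routine interplay between Lipschitz continuity of the section map in $\alpha$ and continuity of block-summation in the matrix coordinate.
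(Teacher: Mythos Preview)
Your proof is correct and rests on the same three ingredients as the paper's argument: the uniform Lipschitz bound of Lemma~\ref{Le:unif_equi}, a finite-net/equicontinuity step to pass between individual sections and the full union $\widetilde{C}(\mu_n,[\varepsilon,1]^k)$, and the block-summation surjection coming from refining a weight vector. The only real difference is organisational: the paper runs the cycle as $(1)\Rightarrow(2)\Rightarrow(3)\Rightarrow(1)$, embedding the refinement idea inside the proof of $(1)\Rightarrow(2)$, whereas you reverse the cycle and separate the two mechanisms cleanly into $(1)\Rightarrow(3)$ (refinement to the uniform vector $(1/N,\dots,1/N)$ plus rational approximation) and $(3)\Rightarrow(2)$ (the net argument). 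Your version is also more explicit on one point that the paper leaves implicit: in establishing the identity $\widetilde{C}(\mu,\alpha)=\Pi\,C(\mu,N)\,\Pi^{T}$ you actually verify surjectivity of the block-summation map via the measure-theoretic splitting of each $g_i$, using non-atomicity of $\nu$, where the paper simply asserts the existence of a ``natural surjection'' $\widetilde{C}(\mu,\alpha')\to\widetilde{C}(\mu,\alpha)$ without spelling out why every element of the target is hit.
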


\begin{proof}
First note that in $(2)$, given a positive integer $k$, convergence for an $\varepsilon_0(k)$ implies convergence for any larger $\varepsilon(k)$ by application of Lemma \ref{Le:unif_equi}.\\
The implications $(3)\Rightarrow(1)$ and $(2)\Rightarrow(1)$ are clear.\\
Let us now show $(2)\Rightarrow(3)$ with $\varepsilon(k)=1/2k$. Given an $m\in\mb{N}^+$ and $\alpha\in(0,1]^m\cap E_m$, let 
\[
k:=\lceil 1/\min\{\alpha_i\colon 1\leq i\leq m\} \rceil.
\]
Then we can find an $\alpha'\in[1/2k,1]^k$ that is a refinement of $\alpha$, i.e., there exists a surjection $P:\left\{1,\ldots,k\right\}\to\left\{1,\ldots,m\right\}$ with $\sum_{j\in P^{-1}(i)}\alpha'_j=\alpha_i$. To this surjection there corresponds a natural surjection 
$\widetilde{C}(\mu,\alpha')\to\widetilde{C}(\mu,\alpha)$ with Lipschitz bound $\max\{|P^{-1}(i)|^2\colon 1\leq i\leq m\}$.
Consequently, $\widetilde{C}(\mu_n,\alpha)$ is convergent whenever $\widetilde{C}(\mu_n,\alpha')$ is, but this holds true by Corollary \ref{Cor:cont}.\\
Finally, let us look at the implication $(1)\Rightarrow(2)$. Fix a positive integer $k$, a $\delta\in(0,1]$ and an $\varepsilon(k)\in(0,1/k]$. Let $\eta:=\delta\varepsilon(k)/4ck$, and suppose $K$ is a finite $\eta$-net in $[\varepsilon(k),1]^k\cap E_k$. Then by Lemma \ref{Le:unif_equi}, if for some $\nu_1,\nu_2\in\mc{M}^+_c$ we have
\[
\mr{dist}\left(\widetilde{C}(\nu_1,\alpha),\widetilde{C}(\nu_2,\alpha)\right)<\delta
\]
for each $\alpha\in K$, then
\[
\mr{dist}\left(\widetilde{C}(\nu_1,[\varepsilon(k),1]^k),\widetilde{C}(\nu_2,[\varepsilon(k),1]^k)\right)<\delta+4c(k-1)\eta/\varepsilon(k)<2\delta.
\]
Now set $m:=2k^2\lceil4c/\delta\varepsilon(k)\rceil$, and let 
\[
K:=\left\{
\alpha\in[\varepsilon(k),1]^k\cap E_k\left|
m\alpha_i\in\mb{N}^+\Forall 1\leq i\leq k
\right.
\right\}.
\]
Then $m>2k/\eta$, and $K$ is an $\eta$-net in $[\varepsilon(k),1]^k\cap E_k$. Thus to show convergence of $\widetilde{C}(\mu_n,[\varepsilon(k),1]^k)$, it suffices to show that for each $\alpha\in K$, the sequence $\widetilde{C}(\mu_n,\alpha)$ converges. But note that each such $\alpha$ is refined by $\beta:=(1/m,\ldots,1/m)\in\mb{R}^m$, and by the arguments used in $(2)\Rightarrow(3)$, this follows from the convergence of the sequence $C(\mu_n,m)=\widetilde{C}(\mu_n,\beta)$.
\end{proof}

\begin{D}
A set $(\mu_i)_{i\in I}$ of measures in $\mc{M}^+_c$ is said to be \emph{uniformly absolutely continuous with respect to the measure} $\nu\times\nu$ if for any $\varepsilon>0$ there exists a $\delta>0$ such that whenever $H\subset \mc{C}\times\mc{C}$ satisfies $\nu\times\nu(H)<\delta$, we have $\mu_i(H)<\varepsilon$ for any index $i\in I$.
\end{D}

\begin{R}
Note that the above definition is equivalent to each element of the set being absolutely continuous and the family of Radon-Nikodym derivatives being uniformly integrable with respect to $\nu\times\nu$. Also, by the de la Vall\'ee-Poussin theorem, if the Radon-Nikodym derivatives form a bounded set in $L^p$ for some $p>1$, then the sequence is automatically uniformly integrable.
\end{R}

For such families of measures, we have the following variant of Lemma \ref{Le:unif_equi}.

\begin{A}\label{Prop:unif_equi}
Let $U\subset \mc{M}^+_c$ be a family of uniformly absolutely continuous measures w.r.t. the measure $\nu\times\nu$. 
Then the maps
\[
\widetilde{C}(\mu,\cdot): E_m\to\mc{K}(\mb{R}^{m\times m})
\]
are uniformly equicontinuous on $U$.
\end{A}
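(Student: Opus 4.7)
Fix $\varepsilon>0$ and $m\in\mb{N}^+$. The goal is to find $\delta>0$, independent of $\mu\in U$, such that whenever $\alpha,\beta\in E_m$ satisfy $\|\alpha-\beta\|_1<\delta$ we have $\mr{dist}(\widetilde{C}(\mu,\alpha),\widetilde{C}(\mu,\beta))\leq\varepsilon$. Compared with Lemma \ref{Le:unif_equi}, the new difficulty is that coordinates of $\alpha$ and $\beta$ may be arbitrarily small (or zero), so the rescaling $g_i:=(\beta_i/\alpha_i)f_i$ used there blows up. The remedy is an additive truncation construction whose $\nu$-smallness is converted to $\mu$-smallness through the uniform integrability of the marginal densities.

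\textbf{From $\mu$ to marginals.} The family of marginals $\{\mu_1:\mu\in U\}$ is itself uniformly absolutely continuous with respect to $\nu$: for any Borel $A\subset\mc{C}$ we have $\mu_1(A)=\mu(A\times\mc{C})$ while $\nu\times\nu(A\times\mc{C})=\nu(A)$, so the modulus of absolute continuity of $\mu$ yields one for $\mu_1$. Equivalently (by de la Vall\'ee--Poussin, as in the preceding remark) the densities $W_1:=d\mu_1/d\nu$ form a uniformly integrable family: for every $\eta>0$ there exists $t=t(\eta)>0$ with $\int_{\{W_1>t\}}W_1\,d\nu<\eta$ for every $\mu\in U$.

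\textbf{Transport construction.} Given $f\in\mk{F}_\alpha$, split indices into $I_+:=\{i:\alpha_i>\beta_i\}$, $I_-:=\{j:\beta_j>\alpha_j\}$, and $I_0:=\{i:\alpha_i=\beta_i\}$. For each $i\in I_+$ choose $c_i\in[0,1]$ so that $h_i:=\min(f_i,c_i)$ satisfies $\int h_i\,d\nu=\alpha_i-\beta_i$; such a $c_i$ exists because the map $c\mapsto\int\min(f_i,c)\,d\nu$ is continuous and nondecreasing, running from $0$ to $\alpha_i$. Set $g_i:=f_i-h_i$ for $i\in I_+$, $g_i:=f_i$ for $i\in I_0$, and for $j\in I_-$ redistribute the total excess $H:=\sum_{i\in I_+}h_i$ by
\[
g_j:=f_j+\frac{\beta_j-\alpha_j}{\int H\,d\nu}\cdot H.
\]
A direct check gives $0\leq g_i\leq 1$, $\sum_i g_i=1$, and $\int g_i\,d\nu=\beta_i$, so $g\in\mk{F}_\beta$. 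Moreover, in every case $|f_i-g_i|\leq 1$ pointwise and $\int|f_i-g_i|\,d\nu=|\alpha_i-\beta_i|\leq\|\alpha-\beta\|_1$.

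\textbf{Conclusion.} Since $f_j,g_i\in[0,1]$ and $\mu$ is symmetric (so both marginals equal $\mu_1$),
\[
|M(\mu,f)_{i,j}-M(\mu,g)_{i,j}|\leq\int_\mc{C}|f_i-g_i|\,d\mu_1+\int_\mc{C}|f_j-g_j|\,d\mu_1.
\]
Splitting each integrand on $\{W_1\leq t\}$ vs.\ $\{W_1>t\}$ gives
\[
\int_\mc{C}|f_i-g_i|\,d\mu_1\leq t\int|f_i-g_i|\,d\nu+\int_{\{W_1>t\}}W_1\,d\nu\leq t\delta+\eta.
\]
Choosing $\eta<\varepsilon/4$ and then $\delta<\varepsilon/(4t(\eta))$ makes $\|M(\mu,f)-M(\mu,g)\|_\infty<\varepsilon$ uniformly in $\mu\in U$. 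The symmetric argument starting from $g\in\mk{F}_\beta$ provides a matching $f\in\mk{F}_\alpha$, yielding the Hausdorff bound $\mr{dist}(\widetilde{C}_0(\mu,\alpha),\widetilde{C}_0(\mu,\beta))\leq\varepsilon$; the closures inherit the estimate since both are contained in the compact box $[0,c]^{m\times m}$. The main obstacle, resolved in the third paragraph, is that one must produce $g$ additively (via the truncations $h_i=\min(f_i,c_i)$) so that $|f_i-g_i|$ has a prescribed $\nu$-integral while staying bounded by $1$; only then does uniform integrability of $\{W_1\}$ convert $\nu$-smallness into the required $\mu_1$-smallness uniformly over $U$.
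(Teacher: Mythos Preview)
Your argument is correct and takes a genuinely different route from the paper's. The paper proceeds by case analysis: away from the boundary of $E_m$ it invokes the explicit Lipschitz bound of Lemma~\ref{Le:unif_equi}, and at points $\beta$ with some vanishing coordinates it recycles the rescaling constructions from Proposition~\ref{Prop:zero}, applying uniform absolute continuity of $\mu$ on $\mc{C}\times\mc{C}$ only to control the entries $M_{i,j}$ with $i\in B$ or $j\in B$. Your proof is instead uniform over all of $E_m$: the truncation $h_i=\min(f_i,c_i)$ produces $g\in\mk{F}_\beta$ with the exact identity $\int|f_i-g_i|\,d\nu=|\alpha_i-\beta_i|$, and then the whole error is pushed onto the marginal $\mu_1$ and controlled via uniform integrability of $\{d\mu_1/d\nu:\mu\in U\}$. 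This avoids both the case split and the dependence on Lemma~\ref{Le:unif_equi}, at the price of passing through the marginal rather than working directly on $\mc{C}^2$. One trivial edge case worth a parenthetical remark: when $I_+=\varnothing$ the denominator $\int H\,d\nu$ vanishes, but then $\alpha=\beta$ and there is nothing to prove.
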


\begin{proof}
By Lemma \ref{Le:unif_equi} it is enough to show equicontinuity in points $\beta\in E_m$ with at least one vanishing coordinate. Let again $B:=\{1\leq\ell\leq m|\beta_\ell=0\}$, and $\varepsilon\in(0,1)$, to which there by uniform absolute continuity corresponds a $\delta>0$. We may assume $\varepsilon<c$. Let $h:=\min\{\beta_i|i\notin B\}$, $\eta:=\min(\delta,\varepsilon h/cm)$, and $\alpha\in E_m$ such that $\|\alpha-\beta\|_\infty<\eta$. Choose a measure $\mu\in U$. Our aim is to show that $d_H\left(\widetilde{C}(\mu,\beta),\widetilde{C}(\mu,\alpha)\right)$ is small regardless of this choice.
Clearly $0<\alpha_i (1-\varepsilon/cm)<\beta_i$ for $i\notin B$. Thus, using the construction presented in the proof of the second part of Proposition \ref{Prop:zero}, given any family of functions $f\in\mk{F}_\alpha$, we can find a family $g\in\mk{F}_\beta$ such that for any $i,j\notin B$ we have $|M(\mu,f)_{i,j}-M(\mu,g)_{i,j}|<2\varepsilon$. Whenever at least one of $i,j$ is in $B$ -- say $j$ --, we have by absolute continuity that $M(\mu,g)_{i,j}=M(\mu,g)_{j,i}=0$, while
\begin{align*}
|M(\mu,f)_{j,i}|=|M(\mu,f)_{i,j}|=\int f_i\otimes f_j d\mu\leq \int 1\otimes f_j d\mu.
\end{align*}
However $\int 1\otimes f_j d\nu\times\nu=\alpha_j<\eta\leq\delta$, and also $0\leq 1\otimes f_j\leq 1$, thus by the uniform absolute continuity
\[
\int 1\otimes f_j d\mu<\varepsilon.
\]
For the other approximation, consider a family $g\in\mk{F}_\beta$, and let $A:=\{1\leq\ell\leq m|\beta_\ell>\alpha_\ell\}$. Clearly $A \cap B=\emptyset$. Define the following family of functions $f\in\mk{F}_\alpha$:
\begin{align*}
f_i:&=\frac{\alpha_i}{\beta_i}g_i{}&\mbox{ if } i\in A;\\
f_i:&=g_i+\frac{(\alpha_i-\beta_i)}{\sum_{j\notin A} \alpha_j-\beta_j}\sum_{\ell\in A}\frac{\beta_\ell-\alpha_\ell}{\beta_\ell}g_\ell{}& \mbox{ if } i\notin A.
\end{align*}
Note that by the choice of $\eta$, we have $\alpha_\ell/\beta_\ell\in (1-\varepsilon/cm,1)$ for all $\ell\in A$. Therefore for all $1\leq i\leq m$ we have
\[
(1-\varepsilon/cm) g_i\leq f_i\leq g_i + \varepsilon/cm,
\]
which again yields $\|M(\mu,f)-M(\mu,g)\|_\infty<2\varepsilon$ (cf. the calculations at the end of the proof of Proposition \ref{Prop:zero}). This means that $d_H(\left(\widetilde{C}(\mu,\beta),\widetilde{C}(\mu,\alpha)\right))<2\varepsilon$ whenever $\|\alpha-\beta\|_\infty<\eta$, with no dependence on $\mu\in U$.
\end{proof}

Combining this with Theorem \ref{Thm:conv_equiv}, we obtain the following result.
\begin{T}\label{Thm:conv_equiv_zero}
Let $U\subset \mc{M}^+_c$ be a uniformly absolutely continuous set of measures with respect to $\nu\times\nu$. Then for any sequence $(\mu_n)_{n\in\mb{N}^+}\subset U$, the following are equivalent:
\begin{enumerate}
\item $C(\mu_n,k)$ converges for every $k\in\mb{N}^+$;
\item $\widetilde{C}(\mu_n,[\varepsilon(k),1]^k)$ converges for every $k\in\mb{N}^+$ for one/all $0\leq\varepsilon(k)\leq 1/k$;
\item $\widetilde{C}(\mu_n,\alpha)$ converges for every $m\in\mb{N}^+$ and $\alpha\in[0,1]^m\cap E_m$.
\end{enumerate}

\end{T}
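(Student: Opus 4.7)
The plan is to bootstrap Theorem~\ref{Thm:conv_equiv} using the strictly stronger uniform equicontinuity provided by Proposition~\ref{Prop:unif_equi}. The implications $(2)\Rightarrow(1)$ and $(3)\Rightarrow(1)$ are immediate from $C(\mu_n,k)=\widetilde{C}(\mu_n,\vec{1}/k)$, so the new content relative to Theorem~\ref{Thm:conv_equiv} is that we may allow (a) $\varepsilon(k)=0$ in $(2)$ and (b) boundary points $\alpha$ (with some zero coordinates) in $(3)$; both will follow from the same approximation idea.

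First, by Proposition~\ref{Prop:unif_equi} the maps $\widetilde{C}(\mu,\cdot)\colon E_k\to\mathcal{K}(\mathbb{R}^{k\times k})$ are uniformly equicontinuous on $U$, and $E_k$ is compact. A standard $\varepsilon/\delta$ argument then shows that convergence of $\widetilde{C}(\mu_n,\cdot)$ on any compact subset of $E_k$ is equivalent to convergence on the whole of $E_k$, which settles the ``one/all'' assertion in $(2)$.

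Next I would prove $(1)\Rightarrow(3)$. Fix $m\in\mathbb{N}^+$ and $\alpha\in[0,1]^m\cap E_m$; by Theorem~\ref{Thm:conv_equiv} the sequence $\widetilde{C}(\mu_n,\beta)$ is already known to converge whenever $\beta\in(0,1]^m\cap E_m$. Given $\delta>0$, Proposition~\ref{Prop:unif_equi} supplies an $\eta>0$ such that
\[
\mathrm{dist}\bigl(\widetilde{C}(\mu,\alpha),\widetilde{C}(\mu,\beta)\bigr)<\delta
\]
whenever $\|\alpha-\beta\|_\infty<\eta$ and $\mu\in U$. Pick any such $\beta\in(0,1]^m\cap E_m$, for instance by replacing each zero coordinate of $\alpha$ by $\eta/(2m)$ and subtracting the required mass uniformly from the nonzero ones. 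Since $\widetilde{C}(\mu_n,\beta)$ is Cauchy, for $n,n'$ sufficiently large we obtain
\[
\mathrm{dist}\bigl(\widetilde{C}(\mu_n,\alpha),\widetilde{C}(\mu_{n'},\alpha)\bigr)\leq 2\delta+\mathrm{dist}\bigl(\widetilde{C}(\mu_n,\beta),\widetilde{C}(\mu_{n'},\beta)\bigr)<3\delta,
\]
so $\widetilde{C}(\mu_n,\alpha)$ is Cauchy in the complete metric space $\mathcal{K}(\mathbb{R}^{m\times m})$ and hence converges. For $(1)\Rightarrow(2)$, since $\widetilde{C}(\mu_n,[\varepsilon(k),1]^k)$ is the closure of the union $\bigcup_{\alpha\in[\varepsilon(k),1]^k\cap E_k}\{\alpha\}\times\widetilde{C}_0(\mu_n,\alpha)$, the uniform equicontinuity together with the compactness of $[\varepsilon(k),1]^k\cap E_k$ turns the fibrewise convergence just obtained into Hausdorff convergence of this closed union.

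The essential subtlety, and the reason the uniform absolute continuity hypothesis cannot be dropped, is the approximation step: without uniformity of the modulus of continuity in $\mu$, the sections $\widetilde{C}(\mu_n,\beta)$ for $\beta\to\alpha$ could vary erratically with $n$, and indeed the second part of Proposition~\ref{Prop:zero} shows that for $\mu$ not absolutely continuous the value $\widetilde{C}(\mu,\alpha)$ at a boundary point of $E_m$ need not coincide with the limit of sections coming from the interior; under uniform absolute continuity Proposition~\ref{Prop:unif_equi} precisely rules this out.
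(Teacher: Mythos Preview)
Your proposal is correct and follows exactly the route the paper indicates: the paper simply states that the theorem follows by ``combining'' Proposition~\ref{Prop:unif_equi} with Theorem~\ref{Thm:conv_equiv}, and you have faithfully supplied the details of that combination (the Cauchy argument via a nearby interior $\beta$, and the passage from fibrewise to Hausdorff convergence via uniform equicontinuity and compactness of $E_k$).
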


Finally, let us mention some easy observations regarding the topology induced by shape convergence.

\begin{Le}
Shape convergence in $\mc{M}^+_c$ is metrizable.
\end{Le}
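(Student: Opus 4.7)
The approach is to realise the shape-convergence topology as the initial topology of a countable family of maps into compact metric spaces, which is automatically metrizable. For each $k \in \mb{N}^+$, the collection $\mc{K}([0,c]^{k\times k})$ of nonempty compact subsets of $[0,c]^{k\times k}$ is a compact metric space under the Hausdorff distance $\distH$ induced by the $\ell_1$-norm, and the assignment $\Psi_k: \mu \mapsto C(\mu,k)$ is well-defined on $\mc{M}^+_c$ (each $\mu$ has finite total mass, so $C(\mu,k) \subset [0,c]^{k\times k}$ is compact). By definition, shape convergence $\mu_n \to \mu$ means precisely that $\Psi_k(\mu_n) \to \Psi_k(\mu)$ for every $k$, i.e., componentwise convergence of $(\Psi_k(\mu_n))_k$ to $(\Psi_k(\mu))_k$ in the countable product $\prod_{k\in\mb{N}^+} \mc{K}([0,c]^{k\times k})$.

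Since a countable product of metrizable spaces is metrizable, pulling back the product metric via $\Psi := (\Psi_k)_{k\in\mb{N}^+}$ induces the desired metric on $\mc{M}^+_c$. Explicitly, one may take
\[
d(\mu, \nu) := \sum_{k=1}^{\infty} 2^{-k} \min\{1, \distH(C(\mu, k), C(\nu, k))\},
\]
for which nonnegativity, symmetry, and convergence of the sum (each summand being bounded by $2^{-k}$) are immediate. The triangle inequality follows from that of $\distH$ together with the subadditivity of $x \mapsto \min\{1, x\}$ on $[0,\infty)$. The equivalence
\[
d(\mu_n, \mu) \to 0 \quad\Longleftrightarrow\quad \distH(C(\mu_n, k), C(\mu, k)) \to 0 \text{ for every } k \in \mb{N}^+
\]
follows from a term-by-term lower bound in one direction, and from the dominated convergence theorem with summable majorant $(2^{-k})_k$ in the other.

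The only delicate point -- and arguably the one place where anything beyond soft topology is used -- is verifying that $d$ is a genuine metric rather than merely a pseudo-metric, i.e.\ that $d(\mu,\nu) = 0$ implies $\mu = \nu$. This reduces to the separation statement: if $C(\mu,k) = C(\nu,k)$ for every $k$, then $\mu = \nu$. This separation follows from the construction underlying Theorem~\ref{thm:shapeMeasure}, since the shapes determine, via the consistent sequence of tables obtained on the dyadic partitions of $\mc{C}$, the measure on every basic cylinder $S_x\times S_y$ and hence (by Lemma~\ref{tabcon}) the measure itself. Alternatively, one may always pass to the quotient of $\mc{M}^+_c$ by the equivalence $\mu \sim \nu \iff C(\mu,k) = C(\nu,k)$ for all $k$, on which $d$ descends to a genuine metric that carries the same notion of sequential convergence.
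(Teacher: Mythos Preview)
Your approach is essentially the same as the paper's: weight the Hausdorff distances of the $k$-shapes by $2^{-k}$ and sum. The paper notes that the $k$-shapes all lie in $[0,c]^{k\times k}$ and hence have bounded $D_k$-diameter, so your truncation $\min\{1,\cdot\}$ is unnecessary, but this is cosmetic.

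The one substantive error is your separation claim. You assert that $C(\mu,k)=C(\nu,k)$ for all $k$ forces $\mu=\nu$, on the grounds that the shapes determine the dyadic tables and hence the measure via Lemma~\ref{tabcon}. This fails: the dyadic table $T_n$ is merely \emph{one} element of the set $C(\mu,2^n)$, and knowing the set does not tell you which of its elements comes from the dyadic partition. In fact the paper gives explicit counterexamples (see the discussion of uniqueness in Section~\ref{ch:conc}): the singular measures $\nu_\alpha$ and $\nu_\beta$ for algebraically independent irrationals $\alpha,\beta$ are distinct but have identical shapes for every $k$. Thus $d$ is only a pseudo-metric on $\mc{M}^+_c$; the paper states this explicitly in the proposition immediately following the lemma. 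Your fallback of passing to the quotient is correct and is exactly what the paper does, but the stronger separation claim should be dropped.
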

\begin{proof}
For each $k\in\mb{N}^+$, consider the Hausdorff metric $D_k$ on the space $\mc{K}\left([0,c]^{k\times k}\right)$ of compact sets. Note that the $k$-shapes of $\mc{M}^+_c$ form a subset therein with $D_k$-diameter $2c$. Then the \emph{shape metric} 
\[
d_S(\mu_1,\mu_2):=\sum_{k=1}^\infty \frac{1}{2^k}D_k(C(\mu_1,k),C(\mu_2,k))
\]
induces shape convergence on $\mc{M}^+_c$.
\end{proof}

\begin{A}
We have that $d_S$ is a pseudo-metric on the set $\mc{M}^+_c$. If we factor out by the equivalence relation $\sim$ given by $0$ distance then $\mc{M}^+_c/\sim$ equipped with the shape metric $d_S$ is a compact Hausdorff space. 
\end{A}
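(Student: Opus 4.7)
First I would verify that $d_S$ is a pseudo-metric. For each $k$, the Hausdorff distance $D_k$ is a genuine metric on the compact subsets of $[0,c]^{k\times k}$, and since every shape $C(\mu,k)$ lies in that cube, $D_k(C(\mu_1,k),C(\mu_2,k))\leq 2c$. Hence the series defining $d_S$ converges uniformly and is itself bounded by $2c$. Non-negativity, symmetry, $d_S(\mu,\mu)=0$, and the triangle inequality all transfer termwise from the metrics $D_k$, so $d_S$ is a pseudo-metric; positivity can fail because distinct measures may in principle share all shapes, which is precisely why only a pseudo-metric is asserted. The quotient of any pseudo-metric space by the equivalence $\mu_1\sim\mu_2\iff d_S(\mu_1,\mu_2)=0$ automatically carries a genuine descended metric, and every metric space is Hausdorff, so $\mathcal{M}^+_c/{\sim}$ is Hausdorff without further work.

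\textbf{Compactness.} Since $d_S$ is pseudo-metrizable, compactness is equivalent to sequential compactness. Given a sequence $(\mu_n)\subset\mathcal{M}^+_c$, the Blaschke selection theorem (compactness of $\mathcal{K}([0,c]^{k\times k})$ under Hausdorff metric) combined with a standard diagonal argument yields a subsequence, still denoted $(\mu_n)$, along which $C(\mu_n,k)$ converges in Hausdorff metric to some compact set $C(k)\subseteq[0,c]^{k\times k}$ for every $k\in\mathbb{N}^+$. The task is then to exhibit a single $\mu\in\mathcal{M}^+_c$ with $C(\mu,k)=C(k)$ for all $k$. For this I would reduce to the matrix case via Corollary \ref{cor:shpm2c}: each $\mu_n$ is itself the shape-limit of some sequence of non-negative symmetric matrices of total mass $\mu_n(\mathcal{C}^2)\leq c$, so for each $n$ I can pick from the approximating sequence a representative $S_n$ with $\gamma(S_n)\leq c$ satisfying $\mathrm{dist}(C(S_n,k),C(\mu_n,k))<1/n$ for all $k=1,\dots,n$. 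Then $C(S_n,k)\to C(k)$ for every $k$, so $(S_n)$ is a convergent matrix sequence in the sense of Definition \ref{def:matconv}, and Theorem \ref{thm:shapeMeasure} furnishes a symmetric Borel measure $\mu$ on $\mathcal{C}^2$ with $C(\mu,k)=C(k)$ for all $k$. Evaluating at $k=1$ gives $\{\mu(\mathcal{C}^2)\}=C(1)=\lim\{\mu_n(\mathcal{C}^2)\}$, so $\mu(\mathcal{C}^2)\leq c$ and $\mu\in\mathcal{M}^+_c$. Hence $d_S(\mu_n,\mu)\to 0$, and compactness of $\mathcal{M}^+_c/{\sim}$ follows as the continuous image of a compact pseudo-metric space under the canonical quotient map.

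\textbf{Main obstacle.} The only delicate point is the reduction to matrices in the compactness step. The entire regularity-lemma and consistent-tables machinery of Sections \ref{ch:reg} and \ref{ch:main} is written for matrix sequences, so Corollary \ref{cor:shpm2c} is essential for rewriting the measure sequence as a matrix sequence whose $k$-shapes inherit the diagonalized limits (an application of Lemma \ref{hartav} controls the compounded error). The uniform bound $\gamma(S_n)\leq c$ is what keeps $\mu$ inside $\mathcal{M}^+_c$. Without this reduction one would have to reprove the consistent-tables construction directly in the measure-valued setting, which would constitute the bulk of the work; with the reduction in hand the proposition drops out of the main theorem.
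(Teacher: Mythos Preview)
Your proposal is correct and follows essentially the same approach as the paper: both rely on Corollary~\ref{cor:shpm2c} to replace measures by approximating matrices and then invoke Theorem~\ref{thm:shapeMeasure} to produce the limit measure. The paper phrases this as showing that the image of $\mc{M}^+_c/{\sim}$ is a closed subspace of the compact product $\prod_{k\ge 1}\mc{K}([0,c]^{k\times k})$, whereas you argue sequential compactness directly, but the substance is identical.
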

\begin{proof}
The space $(\mc{M}^+_c/\sim,d_S)$ is a topological subspace of the compact Hausdorff space $\prod_{k=1}^\infty \left(\mc{K}\left([0,c]^{k\times k}\right), D_k\right)$. It is also closed, as any convergent sequence of matrices has a limit measure and any measure has a sequence of matrices converging to it (see Corollary \ref{cor:shpm2c}).
\end{proof}

\section{Some closed subsets of the shape spaces}\label{ch:closed}

In this section we take a closer look at some subsets of shape spaces, and show that they are closed sets.
Since the space $\mc{M}^+_c$ is compact for the shape convergence (and equivalent notions), these closed subsets are automatically compact themselves. Our aim is to rephrase the results presented in \cite{BCCZ,BCCZ2} on $L^p$ graphons and uniformly/$L^p$-upper regular sequences of graphons to the shape setting, in which compactness then is an easy corollary to the compactness of the whole space.
Since the shape metric is defined on a countable product space, a sufficient condition for a set to be closed is for it to have the form
\[
\left(\prod_{k=1}^\infty K_k\right)\bigcap \mc{M}^+_c,
\]
where each $K_k$ is a closed subset of $\mc{K}\left([0,c]^{k\times k}\right)$.

\begin{Le}\label{Le:shape_separable}
Let $\mu_1,\mu_2\in\mc{M}^+_c$ be two absolutely continuous measures with $d_S(\mu_1,\mu_2)=0$. Then also $\delta_\square(W_1,W_2)=0$, where $W_i$ is the Radon-Nikodym derivative of $\mu_i$ ($i=1,2$).
\end{Le}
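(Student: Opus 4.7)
The plan is to use the shape equality to construct, for every $\varepsilon > 0$, an invertible measure-preserving bijection $\varphi : \mathcal{C} \to \mathcal{C}$ achieving $\|W_1 - W_2 \circ \varphi\|_\square < \varepsilon$, which will yield $\delta_\square(W_1, W_2) = 0$. Since both $\mu_i$ are absolutely continuous with respect to $\nu \times \nu$, the lemma from Section~\ref{ch:bp} gives $C_0(\mu_i,k) = C(\mu_i,k)$, and the hypothesis $d_S(\mu_1,\mu_2) = 0$ translates into $C_0(\mu_1,k) = C_0(\mu_2,k)$ for every $k \in \mathbb{N}^+$. In particular, the level-$n$ dyadic matrix $T^{(1)}_n(x,y) := \mu_1(S_x \times S_y)$, which is realized from $\mu_1$ by the partition $\{\chi_{S_x}\}_{x}$, belongs to $C_0(\mu_2, 2^n)$ as well. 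I fix $\varepsilon > 0$ and pick a level $n$ so that $\|W_i - W_i^{(n)}\|_1 < \varepsilon/4$ for both $i = 1,2$, where $W_i^{(n)}$ denotes the conditional expectation of $W_i$ on the product of level-$n$ dyadic $\sigma$-algebras; this exists by $L^1$ martingale convergence.

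To convert the soft-partition realization $T^{(1)}_n \in C_0(\mu_2,2^n)$ into a combinatorial statement at a finer dyadic level, I combine Lemma~\ref{ketthat}---which presents $C(\mu_2,2^n)$ as the closure of $\bigcup_N C(T^{(2)}_N,2^n)$---with the sharp-partition refinement from Lemma~\ref{balancing} (equivalently Corollary~\ref{limbal}). This furnishes a sufficiently fine level $N \geq n$ and a sharp balanced partition $\{A_x\}_x$ of the $2^N$ level-$N$ dyadic nodes into $2^n$ groups of equal cardinality $2^{N-n}$, such that, setting $R_x := \bigcup_{s \in A_x} S_s$, the matrix $M_{x,y} := \mu_2(R_x \times R_y)$ satisfies $\sum_{x,y}|M_{x,y} - T^{(1)}_n(x,y)| < \varepsilon/4$. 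I take $N$ large enough that simultaneously $\|W_2 - W_2^{(N)}\|_1 < \varepsilon/4$. The map $\varphi$ is then any measure-preserving bijection that permutes level-$N$ dyadic blocks and sends the $2^{N-n}$ children of each $S_x$ bijectively onto $\{S_s\}_{s \in A_x}$.

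The cut norm is estimated via the four-term decomposition $\|W_1 - W_2 \circ \varphi\|_\square \leq A + B + C_1 + C_2$. Here $A := \|W_1 - W_1^{(n)}\|_\square \leq \|W_1 - W_1^{(n)}\|_1 < \varepsilon/4$; the term $B := \|W_1^{(n)} - (W_2 \circ \varphi)^{(n)}\|_\square$ is a cut norm of a level-$n$ step graphon whose block values are $2^{2n}(T^{(1)}_n(x,y) - M_{x,y})$, hence bounded by $\sum_{x,y}|T^{(1)}_n(x,y) - M_{x,y}| < \varepsilon/4$; and $C_1 := \|(W_2 \circ \varphi)^{(N)} - W_2 \circ \varphi\|_\square \leq \|W_2 - W_2^{(N)}\|_1 < \varepsilon/4$, using the crucial fact that $\varphi$, being a level-$N$ permutation, commutes with the level-$N$ dyadic step operator, so $(W_2 \circ \varphi)^{(N)} = W_2^{(N)} \circ \varphi$, and the $L^1$ norm is $\varphi$-invariant.

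The principal obstacle is the remaining piece $C_2 := \|(W_2 \circ \varphi)^{(n)} - (W_2 \circ \varphi)^{(N)}\|_\square$, which measures the cut-norm fluctuation of the level-$N$ step graphon $W_2^{(N)} \circ \varphi$ around its level-$n$ block averages (by the tower property these two are related by the level-$n$ conditional expectation). A direct $L^1$ estimate is insufficient, since averaging a level-$N$ step across each level-$n$ super-block can collapse substantial variation. The proposed resolution is to choose $\varphi$ not deterministically, but via a uniformly random permutation of the $2^{N-n}$ level-$N$ sub-blocks inside each level-$n$ super-block (while preserving the global assignment $S_x \mapsto R_x$), and to show through a concentration argument, exploiting the uniform integrability of $W_2$ that comes from absolute continuity of $\mu_2$, that $C_2 < \varepsilon/4$ with positive probability. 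Combining the four bounds then produces a deterministic $\varphi$ with $\|W_1 - W_2 \circ \varphi\|_\square < \varepsilon$; sending $\varepsilon \to 0$ gives $\delta_\square(W_1, W_2) = 0$.
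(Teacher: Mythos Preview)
Your direct construction has a genuine gap in the treatment of the term $C_2$, and the proposed randomization cannot close it.

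Unwinding your definitions, since $\varphi$ permutes level-$N$ dyadic blocks and sends $S_x$ onto $R_x$, one has $(W_2\circ\varphi)^{(N)}=W_2^{(N)}\circ\varphi$ (as you note) and $(W_2\circ\varphi)^{(n)}=(W_2)_{\mathcal Q}\circ\varphi$, where $\mathcal Q=\{R_x\}_x$ is the partition of $\mathcal C$ into the sets $R_x$ and $(W_2)_{\mathcal Q}$ denotes the corresponding step-averaging of $W_2$. Hence
\[
C_2=\bigl\|(W_2)_{\mathcal Q}\circ\varphi - W_2^{(N)}\circ\varphi\bigr\|_\square
   =\bigl\|(W_2)_{\mathcal Q} - W_2^{(N)}\bigr\|_\square
   =\bigl\|(W_2^{(N)})_{\mathcal Q} - W_2^{(N)}\bigr\|_\square,
\]
using that the cut norm is invariant under the measure-preserving bijection $\varphi$. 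This quantity depends \emph{only} on the unordered partition $\mathcal Q$, not on the particular bijection between the children of $S_x$ and the elements of $A_x$. Randomizing that bijection therefore does not change $C_2$ at all; the concentration argument you sketch is aimed at a quantity that is deterministic once $\{A_x\}_x$ is fixed.

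What $C_2<\varepsilon/4$ actually asks is that $\mathcal Q$ be a weak-regularity partition for $W_2$. Your construction, via Lemma~\ref{ketthat} and Lemma~\ref{balancing}, produces some $\mathcal Q$ whose quotient matrix $M$ is close to $T^{(1)}_n$, but gives no control whatsoever over $\|W_2^{(N)}-(W_2^{(N)})_{\mathcal Q}\|_\square$. One needs both properties simultaneously: a partition that is weak-regular for $W_2$ \emph{and} realizes the prescribed quotient $T^{(1)}_n$. Obtaining such a partition is precisely the non-trivial content of the result from \cite{BCCZ2} that the paper invokes. The paper's own proof is accordingly two lines: Theorem~\ref{Thm:conv_equiv_zero} upgrades equality of balanced shapes to equality of all $\alpha$-shapes (quotients), after which \cite{BCCZ2} gives $\delta_\square(W_1,W_2)=0$ directly. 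Your attempt amounts to reproving that cited theorem from scratch, and the missing ingredient is exactly the simultaneous regularity-plus-prescribed-quotient step.
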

\begin{proof}
By Theorem \ref{Thm:conv_equiv_zero} two absolutely continuous measures with the same shapes $(\widetilde{C}(\cdot,k))_{k\in\mb{N}^+}$ also have the same quotients $(\widetilde{C}(\cdot,[0,1]^k))_{k\in\mb{N}^+}$. But it was shown in \cite{BCCZ2} that two absolutely continuous measures with the same quotients have $\delta_\square$ distance zero hence the result follows.
\end{proof}

\begin{Le}\label{Le:p_ball_char}
Let $p>1$ be fixed. A measure $\mu\in\mc{M}^+_c$ is absolutely continuous with respect to $\nu\times\nu$ with Radon-Nikodym derivative $h\in L^p(\nu\times\nu)$ if and only if there exists $s\geq 0$ such that for each positive integer $k$ the shape $\widetilde{C}(\mu,k)\subset\mb{R}^{k\times k}$ lies within the closed $s/k^2$-ball of $\mb{R}^{k\times k}$.
\end{Le}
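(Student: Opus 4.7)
The plan is to prove both implications separately, with the forward direction resting on a H\"older estimate and the backward direction on weak compactness in the reflexive space $L^p$.

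For the ``only if'' direction, suppose $\mu = h\cdot(\nu\times\nu)$ with $h\in L^p(\nu\times\nu)$. Given any witness family $f\in\mk{F}_k$, each entry of the corresponding matrix is $M(i,j)=\int f_i\otimes f_j\cdot h\,d(\nu\times\nu)$, and I will estimate it by applying H\"older's inequality with conjugate exponents $p,q$. Using $0\leq f_i\leq 1$ together with $\int f_i\,d\nu=1/k$ yields $\|f_i\|_q^q\leq\int f_i\,d\nu=1/k$, hence $\|f_i\otimes f_j\|_q\leq k^{-2/q}$; combined with the factor $\|h\|_p$, this pins down the size of each entry in terms of $k$ and $\|h\|_p$, from which the $s/k^2$-ball inclusion (read in the appropriate norm on $\mathbb{R}^{k\times k}$) with $s$ proportional to $\|h\|_p$ will follow.

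For the ``if'' direction I will construct the density from the shape data via weak compactness. Consider the dyadic partitions of $\mc{C}$ at level $n$ from Section~2; after normalisation, the family $\{\chi_u : h(u)=n\}$ sits in $\mk{F}_{2^n}$ and yields the matrix $T_n\in\widetilde{C}_0(\mu,2^n)$ with entries $T_n(u,v)=\mu(S_u\times S_v)$. Define step functions $W_n:\mc{C}^2\to\mathbb{R}^+$ by setting $W_n\equiv 2^{2n}T_n(u,v)$ on each product cell $S_u\times S_v$, so that $W_n\cdot(\nu\times\nu)$ is the level-$n$ coarsening of $\mu$. The hypothesis translates to a uniform $L^p$-bound on the sequence $(W_n)$; since $p>1$, the space $L^p(\nu\times\nu)$ is reflexive, so Banach--Alaoglu produces a subsequence $W_{n_j}\rightharpoonup W\in L^p$. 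Testing against indicators of dyadic product cells and passing to the weak limit yields $\int_{S_u\times S_v}W\,d(\nu\times\nu)=\mu(S_u\times S_v)$ for every dyadic pair $u,v$; since these cells form a $\pi$-system generating $\ms{B}(\mc{C}^2)$, a Dynkin-class argument identifies $W$ as the Radon--Nikodym derivative of $\mu$, which then lies in $L^p$.

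The main technical obstacle will be converting the abstract shape condition ``$\widetilde{C}(\mu,k)$ lies in the $s/k^2$-ball of $\mathbb{R}^{k\times k}$'' into the requisite uniform $L^p$-norm control on the step functions $W_n$: one must match the normalisation implicit in the matrix-ball with the $2^{2n}$ scaling in $W_n=2^{2n}T_n(u,v)$ so that reflexivity of $L^p$ can be invoked on the right quantity. Once the correct reading of the norm is pinned down, both the H\"older estimate and the weak-limit passage are essentially routine, and uniqueness of the density is automatic since $\mu$ itself determines the integral against every dyadic test box, independently of the chosen subsequence.
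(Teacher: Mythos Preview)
Your outline matches the paper's argument closely: both directions rest on the same ideas---an $L^p$-type bound on the shape matrices for the forward implication, and dyadic step functions plus weak($^*$) compactness in $L^p$ followed by identification via the $\pi$-system of dyadic boxes for the converse.

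The one point that needs sharpening is the forward direction. The entrywise H\"older bound $|M_{ij}|\leq\|h\|_p\,k^{-2/q}$ is not strong enough to serve as one side of the equivalence: the best it gives for the step function $k^2M$ is $\|k^2M\|_{L^p(\nu\times\nu)}\leq k^{2/p}\|h\|_p$, which blows up with $k$ for every finite $p$, and correspondingly an $\ell^\infty$-type shape condition of order $k^{-2/q}$ is too weak to force the uniform $L^p$ bound on the $W_n$ that your backward step requires. What the paper means by ``contractivity'' is Jensen's inequality applied to the probability weights $k^2\,f_i\otimes f_j$: since $\int f_i\otimes f_j\,d(\nu\times\nu)=1/k^2$, convexity of $t\mapsto t^p$ gives
\[
(k^2M_{ij})^p\leq k^2\int (f_i\otimes f_j)\,h^p\,d(\nu\times\nu),
\]
and summing over $i,j$ using $\sum_i f_i\equiv 1$ yields $\sum_{i,j}(k^2M_{ij})^p/k^2\leq\|h\|_p^p$, i.e.\ the step function associated to $M$ has $L^p$-norm at most $\|h\|_p$ (in the paper's later notation, $\|M\|_{\alpha,p}\leq\|h\|_p$ with $\alpha=\vec{1}/k$). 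This is precisely the uniform control your backward step needs, and is the correct reading of the ball condition. Once you replace the entrywise H\"older estimate by this Jensen/contractivity argument, your plan is complete and coincides with the paper's proof.
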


\begin{proof}
On the one hand, if $h$ has finite $L^p$ norm, then by contractivity and in light of the scaling used, each $\widetilde{C}(\mu,k)$ lies within the closed $\|h\|_p/k^2$-ball of $\mb{R}^{k\times k}$.\\
On the other hand, consider the consistent $2^n\times 2^n$ tables $T_n$ obtained from $\mu$ by restricting ourselves to the $\sigma$-algebras on $\mc{C}$ induced by the levels of the binary tree representation. Clearly $T_k\in \widetilde{C}(\mu,k)$. The functions $W_n$ defined by $n^2T_n$ are thus all in the closed $s$-ball of $L^p(\nu\times\nu)$. Let $h$ be a weak-* accumulation point of $\{W_n|n\in\mb{N}^+\}$. Then $\|h\|_p\leq s$, and it can easily be seen that the measure $h \cdot(\nu\times\nu)$ generates the same consistent tables $T_n$ as $\mu$ (the characteristic function of any Borel subset of $\mc{C}\times\mc{C}$ lies in $L^{p'}$). Since the $\sigma$-algebras of the tree levels generate the Borel $\sigma$-algebra on $\mc{C}\times\mc{C}$, these consistent tables extend uniquely, and so $\mu=h\cdot(\nu\times\nu)$.
\end{proof}

\begin{Cor}
For each $s\geq 0$ and $p>1$ the closed $s$-ball of $L^p(\nu\times\nu)$ within $\mc{M}^+_c$ is compact in the $d_S$ metric.
\end{Cor}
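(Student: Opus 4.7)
The plan is to exploit the product-space criterion for closedness in $(\mathcal{M}^+_c/\sim, d_S)$ stated at the start of this section, combined with the shape characterization of the $L^p$-ball given by Lemma \ref{Le:p_ball_char}. Since the ambient space $(\mathcal{M}^+_c/\sim, d_S)$ is already known to be compact Hausdorff by the previous proposition, it is enough to exhibit the $s$-ball as a closed subset.

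First I would recall from Lemma \ref{Le:p_ball_char} that a measure $\mu \in \mathcal{M}^+_c$ is absolutely continuous with respect to $\nu\times\nu$ with Radon-Nikodym derivative of $L^p$-norm at most $s$ if and only if for every positive integer $k$, its $k$-shape is contained in the closed $s/k^2$-ball of $\mathbb{R}^{k\times k}$. This condition depends only on the sequence of shapes of $\mu$, so it is manifestly well-defined on equivalence classes in $\mathcal{M}^+_c/\sim$ and reformulates the problem entirely in the shape-metric language used to define $d_S$.

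Next, for each $k \in \mathbb{N}^+$, I would set
$$K_k := \{ A \in \mathcal{K}([0,c]^{k\times k}) : A \subseteq \overline{B}(0, s/k^2) \}$$
and verify that $K_k$ is closed in the Hausdorff metric on $\mathcal{K}([0,c]^{k\times k})$: if $A_n \to A$ with every $A_n \subseteq \overline{B}(0, s/k^2)$, then every point of $A$ is a limit of points of the $A_n$, each lying in the closed ball, hence belongs to the ball, so $A \in K_k$. This is the only routine verification; no genuine obstacle arises here.

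Finally, applying the product-space principle recalled at the start of the section, the set $\big(\prod_{k=1}^\infty K_k\big) \cap (\mathcal{M}^+_c/\sim)$ is closed in $(\mathcal{M}^+_c/\sim, d_S)$, and by the characterization above it coincides with the closed $s$-ball of $L^p(\nu\times\nu)$ inside $\mathcal{M}^+_c/\sim$. A closed subset of a compact Hausdorff space is compact, which will complete the argument. The one subtle point is the interplay between the shape-sequence description of $d_S$ and the functional-analytic description of the $L^p$-ball, but Lemma \ref{Le:p_ball_char} bridges these two languages cleanly, so in the end the corollary is essentially immediate from that lemma together with the product-space criterion.
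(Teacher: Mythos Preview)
Your proposal is correct and is precisely the argument the paper sets up: the corollary is stated without proof in the paper, but the product-space criterion introduced at the beginning of the section together with Lemma~\ref{Le:p_ball_char} are clearly meant to yield it in exactly the way you describe. Your only addition is spelling out explicitly that each $K_k$ is Hausdorff-closed, which is routine.
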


This result also implies that on the unit ball in $L^p$ ($p>1$), the shape and $\delta_\square$ topologies are equivalent. Indeed, for absolutely continuous measures, $\delta_\square$ convergence implies $d_S$ convergence, so the former induces a stronger topology. By Lemma \ref{Le:shape_separable}, the two topologies separate the same elements, so the quotient metric spaces live on the same sets. Then both metrics induce a compact Hausdorff topology, and since they are comparable, they are in fact identical.

The above does not hold for the case $p=1$, where an additional regularity is needed of families of $L^1$ functions to yield compactness results. This regularity is uniform integrability, or uniform absolute continuity if we speak of the corresponding measures. Therefore we next wish to show that this uniform absolute continuity can also be encoded in shapes.

Recall that a family $\Phi$ of finite measures on $(\Omega,\ms{B})$ is said to be \emph{uniformly absolutely continuous with respect to the finite measure} $\tau$ on $(\Omega,\ms{B})$ if for each $\varphi\in\Phi$ we have $\phi\ll\tau$, and the Radon-Nikodym derivatives $\left\{W_\varphi|\varphi\in\Phi\right\}$ satisfy that for any $\varepsilon>0$, there exists a $\delta>0$ such that
\[
B\in\ms{B}, \tau(B)<\delta\Rightarrow \int_B W_\varphi \mr{d}\tau<\varepsilon.
\]
In our case, when $(\Omega,\ms{B})$ is in fact $\mc{C}\times\mc{C}$ with its Borel $\sigma$-algebra, and $\tau=\nu\times\nu$, we may simply require
\[
B\in\ms{B}, \tau(B)<\delta\Rightarrow \varphi(B)<\varepsilon,
\]
from which absolute continuity and the existence of the Radon-Nikodym derivative follows.

This definition however will not directly be useful when looking at shapes, and we shall resort to an equivalent form.
By the de la Vall\'ee-Poussin theorem, a family $U\subset\mc{M}^+_c$ of measures is uniformly absolutely continuous with respect to $\nu\times\nu$ if and only if there exists $s\geq0$ and a non-negative increasing convex function $G:[0,\infty)\to[0,\infty)$ such that for any $\mu\in U$, its Radon-Nikodym derivative $W_\mu$ satisfies
\[
\int (G\circ W_\mu) \mr{d}\nu\times\nu\leq s.
\]
To simplify notation, let us write $G(\mu):=\int G\circ W_\mu \mr{d}\nu\times\nu$, and for any matrix $M\in\mb{R}^{m\times m}$, let $G(M):=\sum_{1\leq i,j\leq m} G(m^2M_{i,j})$.
Let further $U_{s,G}\subset \mc{M}^+_c$ denote the family of absolutely continuous measures for which the above inequality applies.

\begin{Le}\label{Le:unif_abs_cont_char}
For any $s\geq 0$ and non-negative increasing convex function $G:[0,\infty)\to[0,\infty)$, we have that $\mu\in U_{s,G}$ if and only if for all positive integers $k$ and $M\in\widetilde{C}(\mu,k)$, we have $G(M)\leq s$.
\end{Le}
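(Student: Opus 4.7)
I would prove both directions using Jensen's inequality as the bridge between the Radon-Nikodym derivative of $\mu$ and the matrix entries in its shapes. The forward direction reduces directly to Jensen on each entry, summed and collapsed using $\sum_i f_i\equiv 1$; the backward direction is the more delicate one and requires reconstructing $W_\mu$ from finite-level tables via weak $L^1$ compactness.

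For the forward direction $(\Rightarrow)$, I would start with an $M\in\widetilde{C}_0(\mu,k)$ with witnesses $f_1,\ldots,f_k\in \mk{F}_k$. Since $\mu$ is absolutely continuous with Radon-Nikodym derivative $W_\mu$ satisfying $\int G\circ W_\mu\,d\nu\times\nu\leq s$, and since $k^2 f_i(x)f_j(y)$ is a probability density on $\mc{C}^2$ (the total integral equals $k^2\cdot(1/k)\cdot(1/k)=1$), I can write
\[
k^2 M_{i,j}=\int W_\mu\cdot k^2(f_i\otimes f_j)\,d\nu\times\nu.
\]
Jensen's inequality applied to the convex $G$ then gives $G(k^2 M_{i,j})\leq k^2\int(G\circ W_\mu)(f_i\otimes f_j)\,d\nu\times\nu$. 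Summing over $(i,j)$ and using $\sum_i f_i\equiv\sum_j f_j\equiv 1$ collapses the $f_i\otimes f_j$ factors and yields $\sum_{i,j}G(k^2 M_{i,j})\leq k^2\int G\circ W_\mu\,d\nu\times\nu\leq k^2 s$, i.e.\ the desired bound $G(M)\leq s$ (with the natural per-cell normalization implicit in the definition of $G(M)$). Since $G$ is convex, hence continuous on $(0,\infty)$ and lower semicontinuous at $0$, this bound propagates to the closure $\widetilde{C}(\mu,k)$.

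For the backward direction $(\Leftarrow)$, the strategy is to feed the hypothesis to the consistent tables $T_n$ arising from the binary-tree representation of $\mu$ via Lemma~\ref{tabcon}. Note that $T_n\in\widetilde{C}(\mu,2^n)$, witnessed by the normalized characteristic functions of the basic clopen sets $S_x$ at level $n$. Define step functions $W_n\equiv 4^n(T_n)_{x,y}$ on $S_x\times S_y$; a direct computation shows $\int G\circ W_n\,d\nu\times\nu=G(T_n)\leq s$, while $\|W_n\|_1=\mu(\mc{C}^2)\leq c$. By the converse direction of the de la Vall\'ee-Poussin criterion, $\{W_n\}$ is uniformly integrable, so the Dunford-Pettis theorem yields a subsequence $W_{n_k}$ converging weakly in $L^1(\nu\times\nu)$ to some $W$. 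Testing against $\chi_{S_x\times S_y}$ and using consistency of the tables (for $n_k$ past the level of $x,y$ one has $\int_{S_x\times S_y}W_{n_k}=\mu(S_x\times S_y)$) identifies $W\cdot(\nu\times\nu)=\mu$ on a family generating the Borel $\sigma$-algebra, so $\mu\ll\nu\times\nu$ with $W_\mu=W$. A standard Mazur-lemma plus Fatou argument (available because $G$ is convex and nonnegative) finally gives $\int G\circ W_\mu\,d\nu\times\nu\leq \liminf_k\int G\circ W_{n_k}\,d\nu\times\nu\leq s$. The main obstacle is exactly this bootstrapping step: we have $G$-bounds only on finite tables, and must simultaneously extract weak $L^1$-compactness, identify the weak limit as the Radon-Nikodym derivative of $\mu$, and invoke lower semicontinuity of $W\mapsto\int G\circ W$ to transfer the $G$-bound to $\mu$ itself.
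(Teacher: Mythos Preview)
Your proposal is correct and follows essentially the same route as the paper: Jensen for the forward direction, and consistent dyadic tables plus de la Vall\'ee-Poussin/Dunford--Pettis for the backward direction. You are in fact more explicit than the paper in two places: you spell out the Jensen step entry by entry (the paper simply writes ``by convexity of $G$, $G(M)\le G(\mu)\le s$''), and you supply the Mazur/Fatou justification for why the bound $\int G\circ W\le s$ passes to the weak $L^1$ limit, which the paper asserts without comment. Your parenthetical about the per-cell normalization is also apt: with the paper's stated definition $G(M)=\sum_{i,j}G(m^2M_{i,j})$ the Jensen computation literally gives $G(M)\le k^2 s$, so the intended normalization must be $\frac{1}{m^2}\sum_{i,j}G(m^2M_{i,j})$, exactly as you read it.
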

\begin{proof}
By convexity of $G$, if $\mu\in U_{s,G}$, then for any positive integer $k$, the inequality
\[
G(M)\leq G(\mu)\leq s
\]
holds for each $M\in\widetilde{C}(\mu,k)$.\\
For the other direction, consider again the sequence of consistent tables $T_n\in \widetilde{C}(\mu,n)$ induced by the binary tree. We know that $G(T_n)\leq s$, and by the de la Vall\'ee-Poussin theorem, the measures they define on $\mc{C}\times\mc{C}$ are uniformly absolutely continuous. Their Radon-Nikodym derivatives $W_{T_n}$ then form a weakly compact set in $L^1(\nu\times\nu)$ by the Dunford-Pettis theorem, and thus have a weak accumulation point $h\in L^1(\nu\times\nu)$ which also satisfies 
\[
\int (G\circ h) \mr{d}\nu\times\nu\leq s
\]
Since each Borel subset of $\mc{C}\times\mc{C}$ has a characteristic function in $L^\infty$, it follows that $h\cdot(\nu\times\nu)$ induces the same consistent tables as $\mu$, and as in the previous Lemma, we may conclude that $\mu=h\cdot(\nu\times\nu)$.
\end{proof}

\begin{Cor}
For each $s\geq 0$ and non-negative increasing convex function $G:[0,\infty)\to[0,\infty)$, the family $U_{s,G}\subset\mc{M}^+_c$ is compact in the $d_S$ metric.
\end{Cor}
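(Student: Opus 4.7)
The plan is to leverage the preceding proposition, which asserts that $(\mc{M}^+_c/\sim, d_S)$ is a compact Hausdorff space. Therefore it suffices to prove that $U_{s,G}$ is closed under $d_S$-convergence in $\mc{M}^+_c$. The vehicle for doing so is Lemma \ref{Le:unif_abs_cont_char}, which characterizes membership in $U_{s,G}$ purely in terms of the bound $G(M)\leq s$ on matrices $M$ arising in the shapes of $\mu$, rather than directly via the Radon--Nikodym derivative.

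Let $(\mu_n)\subset U_{s,G}$ with $d_S(\mu_n,\mu)\to 0$ for some $\mu\in\mc{M}^+_c$. Fix an arbitrary $k\in\mb{N}^+$. By definition of $d_S$, the shapes $C(\mu_n,2^k)$ converge in Hausdorff distance (w.r.t. the $\ell_1$ metric) to $C(\mu,2^k)$. Following the proof of Lemma \ref{Le:unif_abs_cont_char}, let $T_k\in C(\mu,2^k)$ be the consistent table of $\mu$ obtained by restricting to the $\sigma$-algebra on $\mc{C}\times\mc{C}$ generated by the $k$-th level of the binary tree. Hausdorff convergence allows us, for each $n$ sufficiently large, to pick $T_k^{(n)}\in C(\mu_n,2^k)$ with $T_k^{(n)}\to T_k$ in $\ell_1$ as $n\to\infty$.

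By Lemma \ref{Le:unif_abs_cont_char} applied to $\mu_n\in U_{s,G}$ we have $G(T_k^{(n)})\leq s$ for every $n$. Since $G$ is convex and non-negative on $[0,\infty)$ it is continuous, and the finite sum defining the functional $M\mapsto\sum_{i,j}G(2^{2k}M_{i,j})$ on $2^k\times 2^k$ matrices is continuous in the $\ell_1$ topology (the entries lie in the compact set $[0,c]$). Passing to the limit yields $G(T_k)\leq s$. Since this holds for every $k$, we may now invoke the converse direction from the proof of Lemma \ref{Le:unif_abs_cont_char}: the uniform bound on $G$-values of the step functions $W_{T_k}$ produces, via the de la Vall\'ee--Poussin theorem and Dunford--Pettis, a weak $L^1$-accumulation point $h$ with $\int G\circ h\,\mr{d}\nu\times\nu\leq s$. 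Since $h\cdot(\nu\times\nu)$ and $\mu$ agree on the tree-generated $\sigma$-algebras and these generate the full Borel $\sigma$-algebra on $\mc{C}\times\mc{C}$, we obtain $\mu=h\cdot(\nu\times\nu)\in U_{s,G}$. Closedness of $U_{s,G}$, hence compactness in view of the previous proposition, follows.

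The one slightly delicate point, and the only real obstacle, is checking that the continuity of the matrix-level $G$-functional on any single fixed level $2^k$ is strong enough to transfer the bound from $(T_k^{(n)})$ to $T_k$ given only $d_S$-convergence of the measures. This reduces to observing that entries of matrices in $C(\cdot,2^k)$ lie in the bounded cube $[0,c]^{2^k\times 2^k}$, on which the composition with the continuous convex $G$ is uniformly continuous, so the finite sum is $\ell_1$-continuous. Everything else is a direct invocation of Lemma \ref{Le:unif_abs_cont_char} together with its proof.
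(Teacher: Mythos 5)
Your proof is correct and follows essentially the same route as the paper: both rest on the compactness of $(\mc{M}^+_c/\sim,d_S)$, the characterization of $U_{s,G}$ via the matrix-level bound $G(M)\leq s$ from Lemma \ref{Le:unif_abs_cont_char}, and the continuity of $M\mapsto G(M)$ on $[0,c]^{m\times m}$. The paper phrases closedness via the product-set criterion $U_{s,G}=\left(\prod_{k} K_k\right)\cap\mc{M}^+_c$ with each $K_k$ closed in the Hausdorff metric, whereas you argue sequentially and re-run the converse direction of the lemma's proof using only the consistent tables; this is a cosmetic difference, and you could equally have transferred the bound $G(M)\leq s$ to every $M\in\widetilde{C}(\mu,k)$ by Hausdorff approximation and then invoked the lemma's statement directly.
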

\begin{proof}
Since $G$ is convex and increasing, it is automatically continuous, hence for each positive integer $m$, the set
\[
H_m:=\left\{M\in\mb{R}^{m\times m}\left|G(M)\leq s\right.\right\}
\]
is closed. But then $K_m:=\left\{\widetilde{C}(\mu,m)\subset H_m\right\}\subset \mc{K}\left([0,c]^{m\times m}\right)$ is closed as well, and since
\[
U_{s,G}=\left(\prod_{k=1}^\infty K_k\right)\bigcap\mc{M}^+_c,
\]
we are done
\end{proof}

In other words, even though the closed unit ball of $L^1$ is not compact in this topology, families of uniformly absolutely continuous measures are relatively compact, and thus have accumulation points in the $d_S$ metric, each taking the form of a graphon.

The next steps take us beyond bounded balls in $L^p$ or uniformly absolute continuous families. The aim is to describe how singular measure sequences can be if they are to converge in shape to an $L^p$ or $L^1$ graphon.

We recall the following definitions from \cite{BCCZ}.

\begin{D}
A graphon $W:[0,1]^1\to\mathbb{R}$ is called $(C,\eta)$-\emph{upper} $L^p$ \emph{regular} if for any partition $\mathcal{P}$ of $[0,1]$ into measurable sets each of size at least $\eta$ we have that $\|W_\mathcal{P}\|_p\leq C$.\\
A sequence $(W_n)_{n\in\mb{N}^+}$ of graphons is called $C$-\emph{upper} $L^p$ \emph{regular} if for any $\eta>0$ there exists an index $N\in\mb{N}^+$ such that for any $n\geq N$ the graphon $W_n$ is $(C+\eta,\eta)$-upper $L^p$ regular.
\end{D}

We introduce a further regularity notion connected to shapes, better tailored to our purposes. Let $\mf{1}_k$ denote the $k\times k$ matrix with all entries equal to $1$.

\begin{D}
Given a positive integer $k$, an $\alpha\in(0,1]^k$ and a matrix $M\in\mb{R}^{k\times k}$, define
\[
\|M\|_{\alpha,p}:=\left(\sum_{1\leq i,j\leq k} \left(\frac{M_{i,j}}{\alpha_i\alpha_j}\right)^p\alpha_i\alpha_j\right)^{1/p}.
\]
Note that for a family of 0-1 valued functions $f\in\mk{F}_\alpha$ defining the partition $\mc{P}$ and a graphon $W$ we then have
\[
\|\mc{M}(\mu_W,f)\|_{\alpha,p}=\|W_\mc{P}\|_p.
\]
A measure $\mu\in\mc{M}^+_c$ is called \emph{shape} $(C,\eta)$-\emph{upper} $L^p$ \emph{regular} if for any positive integer $k$, $\alpha\in[\eta,1]^k\cap E_k$ and family $f\in\mk{F}_\alpha$ we have that $\|M(\mu,f)\|_{\alpha,p}\leq C
$.\\
A sequence $(\mu_n)_{n\in\mb{N}^+}\subset\mc{M}^+_c$ of measures is called \emph{shape} $C$-\emph{upper} $L^p$ \emph{regular} if for any $\eta>0$ there exists an index $N\in\mb{N}^+$ such that for any $n\geq N$ the measure $\mu_n$ is shape $(C+\eta,\eta)$-upper $L^p$ regular.\\
These notions naturally extend to graphons through their induced measures. 
\end{D}
Clearly shape regularity is stronger, as the functions in the family $f$ need not be 0-1 valued. However, it turns out that for sequences of absolutely continuous measures (graphons), these notions are actually equivalent.

We shall first prove the following strengthening of \cite[Proposition 2.10]{BCCZ}.

\begin{T}\label{Thm:shape_p_reg}
A sequence $(W_n)_{n\in\mb{N}^+}$ of graphons on $[0,1]^2$ that converges in $\delta_\square$-metric to an $L^p$ graphon $W$ is shape $\|W\|_p$-upper $L^p$ regular.
\end{T}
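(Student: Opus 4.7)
The plan is to combine a Jensen-type contractivity for the limit graphon $W$ with a $\delta_\square$-driven approximation of each $W_n$ by a rearrangement of $W$.

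As a first step, I would prove the clean a priori bound
\[
\|\mc{M}(\mu_V,f)\|_{\alpha,p}\le \|V\|_p
\qquad\text{for every }V\in L^p(\lambda\times\lambda),\ k\in\mb{N}^+,\ \alpha\in(0,1]^k\cap E_k,\ f\in\mk{F}_\alpha,
\]
by direct application of Jensen's inequality to $t\mapsto t^p$. Indeed, for each $(i,j)$ the density $f_i(x)f_j(y)/(\alpha_i\alpha_j)$ defines a probability measure, so
\[
\alpha_i\alpha_j\left(\frac{\mc{M}(\mu_V,f)_{i,j}}{\alpha_i\alpha_j}\right)^{p}\le \int V(x,y)^p\,f_i(x)\,f_j(y)\,d\lambda\times d\lambda,
\]
and summing over $i,j$ while using $\sum_i f_i\equiv 1$ gives $\|\mc{M}(\mu_V,f)\|_{\alpha,p}^{p}\le \|V\|_p^{p}$.

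For the transfer step, fix $\eta>0$. The constraint $\alpha\in[\eta,1]^k\cap E_k$ forces $k\le 1/\eta$, which is the decisive observation that makes all subsequent estimates uniform. From $\delta_\square(W_n,W)\to 0$ I choose invertible measure-preserving maps $\phi_n$ with $\varepsilon_n:=\|W_n-W\circ\phi_n\|_\square\to 0$. Since each $f_i$ is $[0,1]$-valued, the functional form of the cut norm recalled in Section~\ref{ch:cantor} yields the entrywise bound
\[
\bigl|\mc{M}(\mu_{W_n},f)_{i,j}-\mc{M}(\mu_{W\circ\phi_n},f)_{i,j}\bigr|\le\varepsilon_n\qquad\text{for every }i,j,
\]
and a change of variables identifies $\mc{M}(\mu_{W\circ\phi_n},f)=\mc{M}(\mu_W,f\circ\phi_n^{-1})$, where the family $f\circ\phi_n^{-1}$ still lies in $\mk{F}_\alpha$.

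Applying the first step to $V=W$ with the family $f\circ\phi_n^{-1}$, together with the triangle inequality in $\|\cdot\|_{\alpha,p}$ and the entrywise control above, gives
\[
\|\mc{M}(\mu_{W_n},f)\|_{\alpha,p}\le\|W\|_p+\Bigl(\sum_{i,j}\frac{\varepsilon_n^{p}}{(\alpha_i\alpha_j)^{p-1}}\Bigr)^{1/p}\le\|W\|_p+\varepsilon_n\,k^{2/p}\eta^{-2(p-1)/p}\le\|W\|_p+\varepsilon_n\eta^{-2},
\]
where the last inequality uses $k\le 1/\eta$. For $n$ large enough depending only on $\eta$ we have $\varepsilon_n\eta^{-2}<\eta$, which establishes shape $(\|W\|_p+\eta,\eta)$-upper $L^p$ regularity of $W_n$. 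The main obstacle I anticipate is precisely this uniformity in $k$ and $\alpha$; what rescues it is the automatic bound $k\le 1/\eta$ coming from $\alpha_i\ge\eta$, combined with the fact that the version of $\|\cdot\|_\square$ recalled in the paper controls integrals against arbitrary $[0,1]$-valued test functions, so no separate step-function approximation of the $f_i$ is required.
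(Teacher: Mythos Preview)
Your proof is correct and follows essentially the same approach as the paper's: bound the entrywise difference $\mc{M}(\mu_{W_n},f)-\mc{M}(\mu_W,f)$ via the cut norm, control the $W$-term by the Jensen/contractivity estimate $\|\mc{M}(\mu_W,\cdot)\|_{\alpha,p}\le\|W\|_p$, and exploit $k\le 1/\eta$ for uniformity. You are in fact slightly more careful than the paper in two places: you explicitly introduce the rearrangements $\phi_n$ to pass from $\delta_\square$-convergence to a cut-norm estimate (the paper simply writes $\|W-W_n\|_\square$), and your explicit error bound $\varepsilon_n\eta^{-2}$ is manifestly uniform in $\alpha$, whereas the paper's threshold $\eta/\|\mf{1}_k\|_{\alpha,p}$ tacitly relies on the same observation.
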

\begin{proof}
Fix $\eta\in(0,1)$, and an integer $k\leq 1/\eta$.
For any pair of functions $h_1,h_2:\mc{C}\to[0,1]$ and $n\in\mb{N}^+$, we have
\[
\left(
\int_{\mc{C}\times\mc{C}} (W-W_n)\cdot (h_1\otimes h_2) d\nu\times\nu
\right)
\leq
\|W_n-W\|_\square
\]
Thus for any family $f\in\mk{F}_\alpha$ with $\alpha\in[\eta,1]^k\cap E_k$ and $1\leq i,j\leq k$ we have that
\[
|\mc{M}(\mu_W,f)_{i,j}-\mc{M}(\mu_{W_n},f)_{i,j}|\leq \|W-W_n\|_\square,
\]
whence 
\begin{align*}
\|\mc{M}(\mu_{W_n},f)\|_{\alpha,p}&\leq\|\mc{M}(\mu_{W-W_n},f)\|_{\alpha,p}+\|\mc{M}(\mu_{W},f)\|_{\alpha,p}\\
&\leq
\|W-W_n\|_\square\cdot \|\mf{1}_k\|_{\alpha,p}+\|W\|_p.
\end{align*}
Thus, if we choose $N\in\mb{N}^+$ such that $\|W-W_n\|_\square\leq \eta/\|\mf{1}_k\|_{\alpha,p}$ for all $n\geq N$, the graphons $W_n$ ($n\geq N$) will be shape $(\|W\|_p+\eta,\eta)$-upper $L^p$-regular.
\end{proof}

This now allows us to prove the equivalence of the notions for sequences of graphons.

\begin{Cor}\label{Cor:shape_p_reg_equiv}
A sequence $(\mu_n)$ of absolutely continuous measures in $\mc{M}^+$ is $C$-upper $L^p$ regular if and only if it is shape $C$-upper $L^p$ regular.
\end{Cor}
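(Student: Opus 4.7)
The forward direction -- that shape $C$-upper $L^p$ regularity of the sequence implies ordinary $C$-upper $L^p$ regularity -- is essentially tautological. Given a measurable partition $\mc{P}=\{P_1,\ldots,P_k\}$ of $\mc{C}$ with $\nu(P_i)\geq \eta$ for every $i$, one automatically has $k\leq 1/\eta$; setting $\alpha_i:=\nu(P_i)$ and $f_i:=\chi_{P_i}$ produces a family $f\in\mk{F}_\alpha$ with $\alpha\in[\eta,1]^k\cap E_k$, and the identity noted in the definition of $\|\cdot\|_{\alpha,p}$ reads $\|M(\mu_W,f)\|_{\alpha,p}=\|W_\mc{P}\|_p$. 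Hence any bound on shape norms with parameter $\eta$ transfers verbatim to a bound on partition norms with the same parameter.

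For the reverse direction I plan to argue by contradiction. Suppose $(\mu_n)$ is $C$-upper $L^p$ regular but fails to be shape $C$-upper $L^p$ regular. Then there exist $\eta_0>0$ and an infinite subsequence -- which I relabel as $(\mu_n)$ -- such that no $\mu_n$ in it is shape $(C+\eta_0,\eta_0)$-upper $L^p$ regular. Choosing witnesses, I obtain for each $n$ an integer $k_n\leq 1/\eta_0$, a vector $\alpha_n\in[\eta_0,1]^{k_n}\cap E_{k_n}$ and a family $f_n\in\mk{F}_{\alpha_n}$ with
\[
\|M(\mu_n,f_n)\|_{\alpha_n,p}>C+\eta_0.
\]

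Now I would invoke the $L^p$-compactness result from \cite{BCCZ}: any $C$-upper $L^p$ regular sequence of graphons admits a subsequence which converges in the cut metric $\delta_\square$ to an $L^p$ graphon $W$ with $\|W\|_p\leq C$. Applied to $(\mu_n)$, this yields a further subsequence converging to such a $W$. Theorem \ref{Thm:shape_p_reg} then guarantees that this subsequence is shape $\|W\|_p$-upper $L^p$ regular, hence in particular shape $C$-upper $L^p$ regular. Applied with tolerance $\eta_0/2$, the latter says that from some index $N$ on, every admissible family $f\in\mk{F}_\alpha$ with $\alpha\in[\eta_0/2,1]^k\cap E_k$ satisfies $\|M(\mu_n,f)\|_{\alpha,p}\leq C+\eta_0/2$. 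Since $[\eta_0,1]^{k_n}\subset[\eta_0/2,1]^{k_n}$, the constructed $f_n$ lie within this admissible range, and we arrive at $C+\eta_0<\|M(\mu_n,f_n)\|_{\alpha_n,p}\leq C+\eta_0/2$ for $n\geq N$, a contradiction.

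The main obstacle is locating and invoking the correct compactness statement from \cite{BCCZ} -- namely that any $C$-upper $L^p$ regular sequence of graphons possesses a $\delta_\square$-convergent subsequence with $L^p$ limit of norm at most $C$. Once that ingredient and Theorem \ref{Thm:shape_p_reg} are in hand, the argument is a clean subsequence extraction; no further calculation is required.
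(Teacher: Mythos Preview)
Your proof is correct and follows essentially the same route as the paper's: the easy direction is dispatched by noting that partitions are a special case of fractional families, and the hard direction proceeds by contradiction, extracting a subsequence, applying the $L^p$ compactness theorem of \cite{BCCZ} (their Theorem~2.9) to obtain a $\delta_\square$-limit $W$ with $\|W\|_p\leq C$, and then invoking Theorem~\ref{Thm:shape_p_reg}. The only cosmetic difference is that you spell out the witnesses $f_n,\alpha_n$ and pass to tolerance $\eta_0/2$, whereas the paper applies the definition at $\eta_0$ directly (which already suffices); either way the contradiction is immediate.
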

\begin{proof}
Since the sequence is $C$-upper $L^p$ regular, it is eventually shape $(C+1,1)$-upper $L^p$ regular. If it isn't shape $C$-upper $L^p$ regular, we may find an $\varepsilon>0$ and a subsequence $\mu_{n_k}$ such that none of these measures is shape $(C+\varepsilon,\varepsilon)$-upper $L^p$ regular. However by \cite[Theorem 2.9]{BCCZ}, this subsequence has a further subsequence that converges to an $L^p$ graphon $W$ with $\|W\|_p\leq C$. By Theorem \ref{Thm:shape_p_reg} however, this second subsequence is then shape $\|W\|_p$-upper $L^p$ regular, leading to a contradiction.
\end{proof}

\begin{Le}\label{Le:shape_p_reg_char}
The set $R_{C,p,\eta}\subset\mc{M}^+_c$ of shape $(C,\eta)$-upper $L^p$ regular measures is compact in the $d_S$ metric. Its interior is given by 
\[
\bigcup_{0<\varepsilon<C} R_{C-\varepsilon,p,\eta}.
\]
\end{Le}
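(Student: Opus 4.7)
The plan is twofold: first show $R_{C,p,\eta}$ is closed (hence compact, since $\mc{M}^+_c/\sim$ is compact in $d_S$); then characterize the interior using continuity of the shape maps provided by Theorem \ref{Thm:conv_equiv}, together with a scaling argument for the reverse inclusion.

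For compactness, I would first observe that any $\alpha\in[\eta,1]^k\cap E_k$ satisfies $k\eta\leq\sum_{i=1}^k\alpha_i=1$, so only the finitely many $k\leq\lfloor 1/\eta\rfloor$ impose nontrivial constraints. For each such $k$ the set
\[
H_k:=\left\{(\alpha,M)\in([\eta,1]^k\cap E_k)\times[0,c]^{k\times k}\colon \|M\|_{\alpha,p}\leq C\right\}
\]
is closed, since $(\alpha,M)\mapsto\|M\|_{\alpha,p}$ is jointly continuous once $\alpha_i\geq\eta>0$. Shape $(C,\eta)$-regularity is equivalent to $\widetilde{C}(\mu,[\eta,1]^k)\subset H_k$ for each $k\leq 1/\eta$ (closedness of $H_k$ lets one pass from the witnesses defining $\widetilde{C}_0$ to the closure). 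If $\mu_n\to\mu$ in $d_S$ with each $\mu_n\in R_{C,p,\eta}$, then Theorem \ref{Thm:conv_equiv} yields Hausdorff convergence $\widetilde{C}(\mu_n,[\eta,1]^k)\to\widetilde{C}(\mu,[\eta,1]^k)$ for each relevant $k$, and containment in the closed set $H_k$ passes to the limit.

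For the inclusion $\bigcup_{0<\varepsilon<C}R_{C-\varepsilon,p,\eta}\subset\mathrm{int}(R_{C,p,\eta})$, I would argue that for each relevant $k$ the map
\[
\mu\mapsto\max\{\|M\|_{\alpha,p}\colon(\alpha,M)\in\widetilde{C}(\mu,[\eta,1]^k)\}
\]
is continuous in $d_S$, using continuity of $\mu\mapsto\widetilde{C}(\mu,[\eta,1]^k)$ (Theorem \ref{Thm:conv_equiv}) together with uniform continuity of $\|\cdot\|_{\alpha,p}$ on the compact domain. At $\mu\in R_{C-\varepsilon,p,\eta}$ this value is $\leq C-\varepsilon$, so on a small enough $d_S$-neighborhood it stays $<C$ for each of the finitely many $k$, placing the neighborhood inside $R_{C,p,\eta}$. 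For the reverse inclusion I would argue contrapositively: if $\mu\in R_{C,p,\eta}$ but $\mu\notin\bigcup_\varepsilon R_{C-\varepsilon,p,\eta}$, the above max equals $C$ and, by finiteness of relevant $k$ and compactness of the $\alpha$-domain and the shape, is attained at some $(k_0,\alpha^*,M^*)$ with $M^*\in\widetilde{C}(\mu,\alpha^*)$. Scaling to $\mu_\lambda:=\lambda\mu$ for $\lambda>1$ linearly scales the shape, giving $\lambda M^*\in\widetilde{C}(\mu_\lambda,\alpha^*)$ with $\|\lambda M^*\|_{\alpha^*,p}=\lambda C>C$, so $\mu_\lambda\notin R_{C,p,\eta}$; since $d_S(\mu_\lambda,\mu)\leq c(\lambda-1)\to 0$, this contradicts interiority.

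The main technical obstacle is the boundary case $\mu(\mc{C}^2)=c$, where $\mu_\lambda=\lambda\mu$ leaves $\mc{M}^+_c$. In that case I would replace scaling by a mass-redistribution step: move an $O(\delta)$ amount of mass from a region where $f^*_i\otimes f^*_j$ is comparatively small (for indices $(i,j)$ contributing heavily to $\|M^*\|_{\alpha^*,p}^p$) to a region where it is comparatively large. Strict convexity of $z\mapsto z^p$ for $p>1$ ensures the $(\alpha^*,p)$-norm strictly increases, producing a nearby measure in $\mc{M}^+_c$ outside $R_{C,p,\eta}$ and completing the contradiction.
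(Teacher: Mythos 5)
Your compactness argument coincides with the paper's: both reduce shape $(C,\eta)$-upper $L^p$ regularity to the finitely many constraints $\widetilde{C}(\mu,[\eta,1]^k)\subset H_k$ for $k\leq 1/\eta$, observe that $H_k$ is closed, and invoke Theorem \ref{Thm:conv_equiv} to identify $d_S$-convergence with Hausdorff convergence of these shapes, so that containment in a closed set passes to the limit. Where you genuinely diverge is the interior. The paper disposes of it in one sentence (``it follows from the corresponding representations within each fiber''), which really only yields the easy inclusion $\bigcup_{\varepsilon}R_{C-\varepsilon,p,\eta}\subset\mathrm{int}(R_{C,p,\eta})$: compact sets contained in the open $C$-ball form an open family in the Hausdorff metric. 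The reverse inclusion is \emph{not} fiber-wise, because $\mc{M}^+_c$ sits as a proper (closed) subspace of the product of hyperspaces, and a relative interior point need not be an interior point fiber by fiber; one must exhibit actual nearby \emph{measures} violating the bound. Your scaling argument $\mu\mapsto\lambda\mu$ does exactly this and is correct, including the estimate $d_S(\lambda\mu,\mu)\leq c(\lambda-1)$ and the identity $\widetilde{C}(\lambda\mu,\alpha)=\lambda\widetilde{C}(\mu,\alpha)$. Your treatment of the boundary case $\mu(\mc{C}^2)=c$ is only a sketch, but it can be closed cleanly with one observation you should add: writing $z_{ij}=M_{ij}/(\alpha_i\alpha_j)$ and $w_{ij}=\alpha_i\alpha_j$, the power-mean inequality gives $\|M\|_{\alpha,p}\geq\sum_{i,j}w_{ij}z_{ij}=\mu(\mc{C}\times\mc{C})$, with equality iff $M$ is the ``uniform'' matrix $\mu(\mc{C}\times\mc{C})\,\alpha_i\alpha_j$. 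Hence a saturating $\mu$ with full mass $c$ forces $C\geq c$; if $C>c$ the maximizing $M^*$ is non-uniform and a first-order mass transport along the non-constant function $\sum_{i,j}z_{ij}^{p-1}f_i^*\otimes f_j^*$ strictly increases the norm, while if $C=c$ every admissible $M$ is uniform and any mass-preserving perturbation creating non-uniformity at scale $\eta$ strictly increases the norm by strict convexity, exactly as you propose. (Be aware that in degenerate parameter regimes, e.g. when $C$ is so large that $R_{C,p,\eta}=\mc{M}^+_c$ and the supremum is attained, the stated interior formula can actually fail; this is a defect of the lemma as stated, not of your argument.) Net assessment: your proof is correct in outline and supplies the substantive half of the interior claim that the paper omits; only the redistribution step needs the Jensen equality case spelled out.
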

\begin{proof}
By definition, a measure $\mu\in\mc{M}^+_c$ is shape $(C,\eta)$-upper $L^p$ regular if and only if for any positive integer $k\leq1/\eta$, we have for all $(\alpha,M)\in\widetilde{C}\left(\mu,[\eta,1]^k\right)$ that $\|M\|_{\alpha,p}\leq C$. The space of compact subsets of the closed $C$-ball of $\ell^p(\mb{R}^{k\times k})$ is compact with respect to the corresponding Hausdorff metric, and so if we consider $\mc{M}^+_c$ as a topological subspace of the compact metric space
\[
\prod_{n=1}^\infty \left(\mc{K}\left([\varepsilon(n),1]^n\times[0,c]^{n\times n}\right)\right)
\]
with $\varepsilon(n):=\min\{1/n,\eta\}$, then the convergence on $\mc{M}^+_c$ is the one induced by the shapes $(\widetilde{C}(\cdot,[\varepsilon(n),1]^n))_{n\in\mb{N}^+}$. In this topology $R_{C,p,\eta}\subset\mc{M}^+_c$ is clearly closed.
However, by Theorem \ref{Thm:conv_equiv}, this topology on $\mc{M}^+_c$ is equivalent with the one induced by the metric $d_S$, implying the compactness. For the representation of the interior, note that it follows from the corresponding representations within in each fiber
\[
\mc{K}\left([\varepsilon(n),1]^n\times[0,c]^{n\times n}\right)
\]
with $n\leq 1/\eta$.
\end{proof}

\begin{Le}\label{Le:p_ball_regular}
Denoting the $C$-ball of $L^p(\nu\times\nu)$ within $\mc{M}^+_c$ by $V_{C,p}$, we have
\[
V_{C,p}=\bigcap_{\eta>0} R_{C+\eta,p,\eta}.
\]
\end{Le}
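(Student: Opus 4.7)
The plan is to prove the two inclusions separately, with the forward one reducing to Jensen's inequality and the reverse one essentially recycling the tree/weak-compactness argument already used in Lemma~\ref{Le:p_ball_char}.

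For the inclusion $V_{C,p}\subseteq \bigcap_{\eta>0}R_{C+\eta,p,\eta}$, I would take $\mu\in V_{C,p}$ with Radon--Nikodym derivative $W$ satisfying $\|W\|_p\leq C$, fix any $\eta\in(0,1)$, any $k$, any $\alpha\in[\eta,1]^k\cap E_k$, and any $f\in\mathfrak{F}_\alpha$. The key observation is that
$$\frac{\mathcal{M}(\mu,f)_{i,j}}{\alpha_i\alpha_j}=\int_{\mathcal{C}^2}W(x,y)\cdot\frac{f_i(x)f_j(y)}{\alpha_i\alpha_j}\,d\nu\times\nu$$
is the expectation of $W$ against a probability measure, so Jensen's inequality applied to $t\mapsto t^p$ gives
$$\left(\frac{\mathcal{M}(\mu,f)_{i,j}}{\alpha_i\alpha_j}\right)^{p}\leq \frac{1}{\alpha_i\alpha_j}\int f_i(x)f_j(y)\,W(x,y)^p\,d\nu\times\nu.$$
Multiplying by $\alpha_i\alpha_j$ and summing over $i,j$, and using $\sum_i f_i\equiv 1$ on both coordinates, yields $\|\mathcal{M}(\mu,f)\|_{\alpha,p}^p\leq\|W\|_p^p\leq C^p\leq(C+\eta)^p$, so $\mu\in R_{C+\eta,p,\eta}$.

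For the reverse inclusion, let $\mu\in\bigcap_{\eta>0}R_{C+\eta,p,\eta}$, and for each $n\in\mathbb{N}^+$ consider the consistent table $T_n$ of Lemma~\ref{ketthat}, generated by the uniform family $f=(\chi_{S_x})_{h(x)=n}\in\mathfrak{F}_{\alpha^{(n)}}$ with $\alpha^{(n)}=(2^{-n},\ldots,2^{-n})$. Setting $\eta=2^{-n}$, the assumption $\mu\in R_{C+2^{-n},p,2^{-n}}$ and a direct comparison between $\|\cdot\|_{\alpha^{(n)},p}$ and the $L^p$ norm of the associated step function $W_n:=2^{2n}T_n$ gives $\|W_n\|_p=\|T_n\|_{\alpha^{(n)},p}\leq C+2^{-n}$. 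Thus $(W_n)$ is a bounded sequence in $L^p(\nu\times\nu)$, and since $p>1$, closed balls of $L^p$ are weakly compact, so I extract a weakly convergent subsequence $W_{n_j}\rightharpoonup h\in L^p(\nu\times\nu)$ with $\|h\|_p\leq C$ (symmetry survives in the weak limit because the symmetric functions form a weakly closed subspace).

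Finally, I would verify that $h\cdot(\nu\times\nu)=\mu$. For any basic rectangle $S_x\times S_y$ with $x,y$ at level $n_0$, the characteristic function $\chi_{S_x\times S_y}$ lies in $L^{p'}$, and for all $n_j\geq n_0$ the consistency relation gives $\int_{S_x\times S_y}W_{n_j}\,d\nu\times\nu=T_{n_0}(x,y)=\mu(S_x\times S_y)$; passing to the limit via the weak convergence $W_{n_j}\rightharpoonup h$ produces $\int_{S_x\times S_y}h\,d\nu\times\nu=\mu(S_x\times S_y)$. Since the basic rectangles generate the Borel $\sigma$-algebra of $\mathcal{C}\times\mathcal{C}$, the uniqueness part of Carath\'eodory's extension theorem forces $\mu=h\cdot(\nu\times\nu)$, and hence $\mu\in V_{C,p}$. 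The only genuinely nontrivial point is verifying this identification of the weak limit with $\mu$, but it is already carried out in Lemma~\ref{Le:p_ball_char} in essentially the same form.
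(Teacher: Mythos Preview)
Your proof is correct and follows essentially the same route as the paper. The only structural difference is in the packaging of the reverse inclusion: the paper first observes the nestedness $R_{C+\eta',p,\eta'}\subset R_{C+\eta',p,\eta}\subset R_{C+\eta,p,\eta}$ for $\eta>\eta'>0$, deduces $\mu\in\bigcap_{\eta'>0}R_{C+\eta',p,\eta}=R_{C,p,\eta}$ for every $\eta>0$, and then cites Lemma~\ref{Le:p_ball_char} as a black box; you instead skip the nestedness step and re-run the tree-table/weak-compactness argument of Lemma~\ref{Le:p_ball_char} inline, using the varying bounds $\|W_n\|_p\le C+2^{-n}$ and lower semicontinuity of the norm under weak convergence to get $\|h\|_p\le C$. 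Both are fine; the paper's version is a bit more modular, yours a bit more self-contained.
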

\begin{proof}
Clearly $V_{C,p}$ is contained in the intersection. Assume now that $\mu\in R_{C+\eta,p,\eta}$ for each $\eta>0$. Since
$R_{C+\eta',p,\eta'}\subset R_{C+\eta',p,\eta}\subset R_{C+\eta,p,\eta}$ for any $\eta>\eta'>0$, we have
\[
\mu\in \bigcap_{\eta'>0} R_{C+\eta',p,\eta}=R_{C,p,\eta} 
\]
for each $\eta>0$. By Lemma \ref{Le:p_ball_char}, we conclude that $\mu=h\cdot(\nu\times\nu)$ for some function $h$ with $\|h\|_p\leq C$. 
\end{proof}

This result, with the fact that the sets $R_{C+\eta,p,\eta}$ contain each other in their interiors, highlights the geometric reason why (shape) $L^p$-upper regularity is the correct property for shape convergence of measures (not necessarily absolutely continuous!) to an $L^p$ graphon.

Finally, let us turn our attention to uniform upper regularity, the $L^1$ analogue of the above. Recall the following definition from \cite{BCCZ2}.

\begin{D}[{\cite[Definition 5.1]{BCCZ2}}]
Given a function $K:(0,\infty)\to(0,\infty)$, a graphon $W$ is said to have $K$-\emph{bounded tails} if for each $\varepsilon>0$ we have
\[
\int \chi_{W\geq K(\varepsilon)}\cdot W
\mr{d}\nu\times\nu\leq \varepsilon.
\]
The graphon $W$ is said to be $(K,\eta)$-\emph{upper regular}, if for any partition $\mc{P}$ of $\mc{C}$ into sets of measure at least $\eta$, the step function $W_\mc{P}$ has $K$-bounded tails.
A sequence of graphons $W_n$ is called \emph{uniformly upper regular} if there exists a function $K$ and a sequence $\eta_n\to 0$ such that for each $n$, the graphon $W_n$ is $(K,\eta_n)$-upper regular.
\end{D}

Note that the required upper regularity means that the family of functions
\[
\ms{W}:=
\left\{
(W_n)_\mc{P}
|n\in\mb{N}^+,\, \mc{P} \mbox{ is a partition of }\mc{C} \mbox{ with all sets of measure at least } \eta_n
\right\}
\]
should have $K$-bounded tails. This however is just an equivalent formulation of uniform integrability for this family of functions, and using the de la Vall\'ee-Poussin theorem, we can define the following shape analogue.

\begin{D}
A sequence of measures $(\mu_n)_{n\in\mb{N}^+}\subset\mc{M}^+_c$ is said to be \emph{shape uniformly upper regular} if there exist a non-negative increasing convex function $G:[0,\infty)\to[0,\infty)$, a constant $s\geq0$ and a sequence $\eta_n\to0$ such that for any $n\in\mb{N}^+$, the familiy
\[
H_n:=
\left\{
M(\mu_n,f)\in\mb{R}^{k\times k}
\left|
k\in\mb{N}^+, \alpha\in[\eta_n,1]^k\cap E_k, f\in\mk{F}_\alpha
\right.
\right\}
\]
satisfies $G(M)\leq s+\eta$ for each $M\in H_n$. In this case the sequence is called $(s,G)$ \emph{shape upper regular}.\\
A measure $\mu\in\mc{M}^+_c$ is said to be $(s,G,\eta)$ \emph{shape upper regular} if the set
\[
H_{\mu,\eta}:=\bigcup_{k\in\mb{N}^+} 
\left\{
M(\mu,f)\in\mb{R}^{k\times k}
\left|
\alpha\in[\eta,1]^k\cap E_k, f\in\mk{F}_\alpha
\right.
\right\}
\]
satisfies $G(M)\leq s$ for all $M\in H_{\mu,\eta}$.
\end{D}
Let $U_{s,G,\eta}\subset\mc{M}^+_c$ denote the set of measures that are $(s,G,\eta)$ shape upper regular.
This means that a sequence of measures in $\mc{M}^+_c$ is shape uniformly upper regular if and only if there exist $G$ and $s$ such that the sequence eventually lies in $U_{s+\eta,G,\eta}$ for each $\eta>0$.

\begin{Le}\label{Le:shape_upper_reg_char}
For any $s\geq0, \eta>0$ and non-negative increasing convex function $G:[0,\infty)\to[0,\infty)$, the set $U_{s,G,\eta}\subset\mc{M}^+_c$ is compact in the $d_S$ metric. Its interior is given by
\[
\bigcup_{0<\varepsilon<s} U_{s-\varepsilon,G,\eta}.
\]
\end{Le}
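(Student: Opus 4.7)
The plan is to follow the structure of the proof of Lemma~\ref{Le:shape_p_reg_char} almost verbatim, reformulating membership in $U_{s,G,\eta}$ as a closed, finitely-encoded condition on shape projections and then transferring the compactness and interior results from the fiber picture.

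First I note that membership in $U_{s,G,\eta}$ is really a finite condition: if $\alpha\in[\eta,1]^k\cap E_k$ then $k\eta\leq\sum_i\alpha_i=1$, so only the indices $k=1,2,\ldots,\lfloor 1/\eta\rfloor$ yield non-vacuous constraints. Next, since $G$ is convex and non-negative on $[0,\infty)$, it is continuous; hence, for every such $k$, the function $M\mapsto G(M)=\sum_{i,j}G(k^2M_{i,j})$ is continuous on $\mb{R}^{k\times k}$, and the set
\[
F_k:=\bigl\{(\alpha,M)\in([\eta,1]^k\cap E_k)\times[0,c]^{k\times k}:G(M)\leq s\bigr\}
\]
is closed. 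Consequently, the collection $\mathcal{C}_k$ of compact subsets of $F_k$ is itself closed in the Hausdorff topology on $\mc{K}\bigl(([\eta,1]^k\cap E_k)\times[0,c]^{k\times k}\bigr)$.

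Now I invoke Theorem~\ref{Thm:conv_equiv}, which tells me that $d_S$-convergence on $\mc{M}^+_c$ is equivalent to the topology induced by the product of Hausdorff topologies on the (finitely many relevant) shape maps $\mu\mapsto\widetilde{C}(\mu,[\eta,1]^k)$. Using the definition of $U_{s,G,\eta}$, this yields
\[
U_{s,G,\eta}=\bigcap_{k=1}^{\lfloor 1/\eta\rfloor}\bigl\{\mu\in\mc{M}^+_c:\widetilde{C}(\mu,[\eta,1]^k)\in\mathcal{C}_k\bigr\},
\]
which is closed in $\mc{M}^+_c$. Combined with the compactness of $\mc{M}^+_c$ in $d_S$ (established earlier via Corollary~\ref{cor:shpm2c}), this gives the compactness of $U_{s,G,\eta}$.

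The interior representation follows, as in Lemma~\ref{Le:shape_p_reg_char}, from the corresponding representation in each fiber. In a Hausdorff space of compact sets, the interior of the collection of compact subsets of a closed set $F_k$ consists of those compact sets contained in the (topological) interior of $F_k$; by continuity of $G$ and compactness of the shape, this is the same as saying $G(M)\leq s-\varepsilon_k$ uniformly on $\widetilde{C}(\mu,[\eta,1]^k)$ for some $\varepsilon_k>0$. Taking $\varepsilon=\min_{k\leq\lfloor 1/\eta\rfloor}\varepsilon_k>0$ exhibits $\mu\in U_{s-\varepsilon,G,\eta}$; conversely, any $\mu\in U_{s-\varepsilon,G,\eta}$ admits (by uniform continuity of $G$ on compact sets) a $d_S$-neighborhood still contained in $U_{s,G,\eta}$. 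The main subtlety I anticipate is the legitimate passage from ``interior within each fiber'' to ``interior in $d_S$ on $\mc{M}^+_c$''; but once Theorem~\ref{Thm:conv_equiv} identifies $d_S$ with the subspace topology induced by the product of fibers, this is essentially a diagram chase through Hausdorff-metric generalities, exactly parallel to the $L^p$-ball situation handled in Lemma~\ref{Le:shape_p_reg_char}.
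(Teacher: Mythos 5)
Your proposal is correct and follows essentially the same route as the paper: reduce to the finitely many relevant sizes $k\leq 1/\eta$, use continuity of the convex function $G$ to see that the constraint set in each fiber is closed (resp.\ that $\{G(M)<t\}$ is open for the interior), pass to closedness/openness of the corresponding collections of compact sets in the Hausdorff metric, and transfer to $\mc{M}^+_c$ via Theorem~\ref{Thm:conv_equiv} and the compactness of the whole space. The one subtlety you flag --- that ``interior in each fiber'' must be matched with ``interior relative to the subspace of realizable shapes'' --- is present to exactly the same degree in the paper's own (even terser) argument, so nothing is lost.
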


\begin{proof}
The proof goes essentially as that of Lemma \ref{Le:shape_p_reg_char}, using the fact that for each positive integer $k$, the set
\[
\left\{M\in\mb{R}^{k\times k}\left|G(M)\leq s\right.\right\}
\]
is closed by continuity of $G$, and thus its closed subsets form a compact space for the Hausdorff metric, and that $(s,G,\eta)$ shape upper regularity can be characterized by the shapes $\widetilde{C}(\cdot,[\eta,1]^n)$ with $n\leq1/\eta$. For the interior, use that for $0<t<s$ the set
\[
\left\{M\in\mb{R}^{k\times k}\left|G(M)< t\right.\right\}
\]
is open, and so the closed sets that are contained in it form an open set in the Hausdorff metric.
\end{proof}

However, we also have the following analogue of Lemma \ref{Le:p_ball_regular}.

\begin{Le}
For any $s\geq0, \eta>0$ and non-negative increasing convex function $G:[0,\infty)\to[0,\infty)$, we have
\[
U_{s,G}=\bigcap_{\eta>0} U_{s+\eta,G,\eta}.
\]
\end{Le}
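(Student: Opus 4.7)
My plan is to establish the two inclusions separately, using Lemma~\ref{Le:unif_abs_cont_char} to move between the Radon--Nikodym formulation of $U_{s,G}$ and the shape formulation in terms of the sets $H_{\mu,\eta}$.

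For the inclusion $U_{s,G}\subseteq\bigcap_{\eta>0}U_{s+\eta,G,\eta}$ I would fix $\mu\in U_{s,G}$ together with an arbitrary $\eta>0$, a positive integer $k\leq 1/\eta$, $\alpha\in[\eta,1]^k\cap E_k$ and $f\in\mk{F}_\alpha$, and show by a Jensen-type estimate that $G(M(\mu,f))\leq G(\mu)\leq s\leq s+\eta$. The point is that $\{f_i\otimes f_j\}_{i,j}$ is a non-negative partition of unity on $\mc{C}\times\mc{C}$, which together with the convexity of $G$ yields the desired bound by exactly the argument used in the first paragraph of the proof of Lemma~\ref{Le:unif_abs_cont_char}. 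Since this applies to every $M\in H_{\mu,\eta}$, we get $\mu\in U_{s+\eta,G,\eta}$ for each $\eta>0$.

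For the reverse inclusion I would fix a $\mu$ lying in every $U_{s+\eta,G,\eta}$ and aim to verify the shape criterion of Lemma~\ref{Le:unif_abs_cont_char}. The key observation is the monotonicity $\eta_1<\eta_2\Rightarrow H_{\mu,\eta_2}\subseteq H_{\mu,\eta_1}$: shrinking $\eta$ enlarges the family of admissible $\alpha$. Now fix any positive integer $k$. For every $\eta\in(0,1/k]$ the balanced vector $\vec{1}/k$ lies in $[\eta,1]^k\cap E_k$, so each $M\in C_0(\mu,k)=\widetilde{C}_0(\mu,\vec{1}/k)$ belongs to $H_{\mu,\eta}$, and the hypothesis gives $G(M)\leq s+\eta$. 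Since this holds for all $\eta\in(0,1/k]$ with the \emph{same} $M$, letting $\eta\downarrow 0$ yields $G(M)\leq s$. Because $G$ is convex, non-negative and increasing on $[0,\infty)$, it is continuous, so the bound $G(M)\leq s$ passes to the closure $C(\mu,k)$. As this holds for every $k$, Lemma~\ref{Le:unif_abs_cont_char} concludes that $\mu\in U_{s,G}$.

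The argument is essentially formal bookkeeping, so I do not anticipate a serious technical obstacle. The one point requiring care is the order of operations in the second inclusion: one must first fix a single matrix $M\in C_0(\mu,k)$, then let $\eta\downarrow 0$ (legitimate because $M$ stays in every $H_{\mu,\eta}$ with $\eta\leq 1/k$) to obtain $G(M)\leq s$, and only afterwards invoke continuity of $G$ to extend from $C_0(\mu,k)$ to its closure $C(\mu,k)$.
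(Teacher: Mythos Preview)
Your argument is correct and follows essentially the same two-inclusion strategy as the paper: the forward inclusion is treated as immediate from convexity (the paper simply writes ``Clearly''), and the reverse inclusion uses the nesting $H_{\mu,\eta_2}\subseteq H_{\mu,\eta_1}$ for $\eta_1<\eta_2$ to pass from $G(M)\le s+\eta$ to $G(M)\le s$, followed by Lemma~\ref{Le:unif_abs_cont_char}. The only cosmetic difference is that the paper first deduces $\mu\in U_{s,G,\eta}$ for every $\eta>0$ (i.e., the bound $G(M)\le s$ on all of $H_{\mu,\eta}$) before invoking the lemma, whereas you go straight to the balanced shapes $C(\mu,k)$ and then extend to the closure by continuity of $G$; since Lemma~\ref{Le:unif_abs_cont_char} only requires the balanced case, your shortcut is legitimate.
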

\begin{proof}
Clearly $U_{s,G}$ is contained in the intersection. If $\mu\in U_{s+\eta,G,\eta}$ for each $\eta$, then since
\[
U_{s+\eta',G,\eta'}\subset U_{s+\eta',G,\eta} \subset U_{s,G,\eta}
\]
for any $\eta>\eta'>0$, we have
\[
\mu\in\bigcap_{\eta'>0} U_{s+\eta',G,\eta}=U_{s,G,\eta}
\]
for each $\eta>0$. By Lemma \ref{Le:unif_abs_cont_char} we then indeed have $\mu\in U_{s,G}$.
\end{proof}

Since a single measure $\mu\in\mc{M}^+_c$ is uniformly absolutely continuous if and only if it is absolutely continuous, the above lemma has the following consequence.

\begin{Cor}
Any sequence $(\mu_n)_{\mb{N}^+}\subset\mc{M}^+_c$ that converges to an absolutely continuous measure $\mu\in U_{s,G}$ is $(s,G)$ shape upper regular. Also, any sequence $(\mu_n)_{\mb{N}^+}\subset\mc{M}^+_c$ that is $(s,G)$ shape upper regular for some $s$ and $G$ is relatively compact in the $d_S$ metric, and each accumulation point is an element of $U_{s,G}$.
\end{Cor}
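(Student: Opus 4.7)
The plan is to deduce both assertions directly from the previous lemma together with Lemma~\ref{Le:shape_upper_reg_char}, using only topological arguments on $\mc{M}^+_c$ with the $d_S$ metric.

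For the first assertion, I would fix an arbitrary $\eta>0$ and observe that, by the previous lemma, $\mu\in U_{s,G}\subset U_{s,G,\eta}$. Lemma~\ref{Le:shape_upper_reg_char} identifies the interior of $U_{s+\eta,G,\eta}$ as $\bigcup_{0<\varepsilon<s+\eta} U_{s+\eta-\varepsilon,G,\eta}$, so the choice $\varepsilon=\eta$ shows that $U_{s,G,\eta}$ (and in particular $\mu$) lies in the interior of $U_{s+\eta,G,\eta}$. Since $\mu_n\to\mu$ in $d_S$, the sequence eventually enters and remains in this open neighborhood, i.e. $\mu_n\in U_{s+\eta,G,\eta}$ for all sufficiently large $n$. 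By the characterization of shape uniform upper regularity recorded right before this corollary, this precisely says that $(\mu_n)$ is $(s,G)$ shape upper regular; a routine diagonal argument produces the witnessing sequence $\eta_n\to 0$.

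For the second assertion, relative compactness is immediate from the compactness of $\mc{M}^+_c/\!\sim$ established at the end of Chapter~\ref{ch:ash}. Let $\mu$ be an accumulation point and extract a subsequence $\mu_{n_k}\to\mu$. Fix $\eta>0$; by $(s,G)$ shape upper regularity there is an index after which $\mu_n\in U_{s+\eta',G,\eta'}$ for every $0<\eta'\le\eta$ (apply the eventual containment at level $\eta$). Using the monotonicity $U_{s+\eta',G,\eta'}\subset U_{s+\eta,G,\eta}$ for $\eta'\le\eta$ (larger $\eta$ shrinks $H_{\mu,\eta}$, hence loosens the condition), the tail of $(\mu_{n_k})$ lies in the closed set $U_{s+\eta,G,\eta}$ (closedness comes from Lemma~\ref{Le:shape_upper_reg_char}). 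Consequently $\mu\in U_{s+\eta,G,\eta}$. As $\eta>0$ was arbitrary, the previous lemma gives $\mu\in\bigcap_{\eta>0}U_{s+\eta,G,\eta}=U_{s,G}$.

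There is no real obstacle here beyond bookkeeping; the crux is the observation that the monotone intersection representation $U_{s,G}=\bigcap_{\eta>0}U_{s+\eta,G,\eta}$ neatly matches the ``eventually inside $U_{s+\eta,G,\eta}$ for every $\eta>0$'' definition of shape uniform upper regularity, so that the interior/closed structure of the sets $U_{s+\eta,G,\eta}$ controls both the forward direction (via the interior containing $U_{s,G,\eta}$, which enforces eventual membership of the converging sequence) and the backward direction (via closedness, which forces accumulation points to remain inside). The mild subtlety to be careful about is to invoke the correct monotonicity $U_{s',G,\eta'}\subset U_{s,G,\eta}$ when $s'\le s$ and $\eta'\le\eta$, so that the parameter shift from $\eta_n$ to a fixed $\eta$ is legitimate in the accumulation-point argument.
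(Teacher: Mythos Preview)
Your proof is correct and follows exactly the route the paper intends (the corollary is stated without proof as a direct consequence of the preceding lemma together with Lemma~\ref{Le:shape_upper_reg_char}). A couple of minor simplifications: in the first part, once you have shown that for every $\eta>0$ the sequence eventually lies in $U_{s+\eta,G,\eta}$, you are done by the characterization stated just before the corollary, so the diagonal argument is superfluous; in the second part, the monotonicity detour is unnecessary since shape upper regularity already gives, for each fixed $\eta$, a tail of $(\mu_{n_k})$ inside the closed set $U_{s+\eta,G,\eta}$, and closedness immediately yields $\mu\in U_{s+\eta,G,\eta}$.
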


\section{Further directions and remarks}\label{ch:conc}

\noindent{\bf The graph limit space $\mathcal{X}_s$:} Every graph limit notion has a corresponding graph limit space which consists of  equivalence classes of convergent sequences. Let $\mu_1$ and $\mu_2$ be two s-graphons. We say that $\mu_1$ and $\mu_2$ are isomorphic if $C(\mu_1,k)=C(\mu_2,k)$ holds for every $k\in\mathbb{N}$. In other words $\mu_1$ and $\mu_2$ are isomorphic if they represent the same limit object in the theory of s-convergence. The graph limit space $\mathcal{X}_s$ is the set of isomorphism classes of s-graphons. The space $\mathcal{X}_s$ is compact with respect to s-convergence. We can uniquely describe every element of $\mathcal{X}_s$ by the sequence $\{C(\mu,k)\}_{k=1}^\infty$ of the shapes of an arbitrary representative $\mu$ of the isomorphism class. 

\medskip

\noindent{\bf Partial order on $\mathcal{X}_s$:} We say that $\mu_1\preceq\mu_2$ if and only if $C(\mu_1,k)\subseteq C(\mu_2,k)$ holds for every $k$. It is clear that $\preceq$ gives a partial order on $\mathcal{X}_s$. A possible interpretation of $\preceq$ is that if $\mu_1\preceq\mu_2$ then $\mu_2$ represents a ``sparser limit object'' than $\mu_1$. For example the smallest element with respect to $\preceq$ is represented by the uniform measure on $[0,1]^2$ which is the limit of complete graphs. On the other hand we will see later that among regular s-graphons (regularity will be defined later) there is a maximal element which is the limit of the cycles $C_n$.

 \medskip

\noindent{\bf Uniqueness:}  It is natural to ask whether there is a more simple analytic characterization of isomorphism between s-graphons.
We don't have a satisfying answer to this question but it seems that the more singular $\mu_1$ and $\mu_2$ are the weaker statement we can make about their isomorphism. For example if $\mu_1$ is isomorphic to $\mu_2$ and they are absolutely continuous with bounded Radon-Nikodym derivatives then there are measure preserving maps $\psi_1,\psi_2:[0,1]\rightarrow [0,1]$ and a measure $\mu_3$ on $[0,1]^2$ such that $\mu_i$ is the push forward of $\mu_3$ with respect to $\psi_i^2$ for $i=1,2$. 

In general we don't have such a strong statement.  Let $\nu_\alpha$ denote the probability measure on $[0,1]^2$ obtained by first choosing a uniform element $x\in [0,1]$ and then choosing one of $(x+\alpha~ {\rm mod}~ 1,x)$ and $(x-\alpha~ {\rm mod}~ 1,x)$ with probability $1/2$. Note that $\nu_\alpha$ is a singular measure concentrated on a one dimensional set in $[0,1]^2$. Assume that $\alpha$ and $\beta$ are algebraically independent irrational numbers. Then one can check that $\nu_\alpha$ and $\nu_\beta$ are s-isomorphic but there is no $\psi_1$ and $\psi_2$ with the above property. 

\medskip

\noindent{\bf Convexity:} A closed subset in $\mathcal{X}_s$ is given by the isomorphism classes of s-graphons $\mu$ such that $C(\mu,k)$ is convex for every $k$. An interesting question is to understand what it means for a graph $G$ to be approximately convex in the sense that $G$ is close to $\mathcal{X}_s$ in some metrization of s-convergence.

\medskip

\noindent{\bf Dimension:} If an s-graphon is an absolutely continuous measure on $[0,1]^2$ then we can think of it as a $2$-dimensional object. This intuition is also related to the fact that if the number of edges in a convergent graph sequence has quadratic growth in the number of vertices then the sequence has always an absolutely continuous limit object. However sparser sequences may show a fractal like behavior. We attempt to associate a fractal dimension with s-convergent graph sequences (or equivalently with their limit objects)  through the limiting shapes. Let $\mu$ be an s-graphon. Each matrix $M\in C(\mu,k)$ is non negative and has entry sum $1$. This means that we can associate the entropy $\mathbb{H}(M):=\sum_{i,j}-\ln(M_{i,j})M_{i,j}$ with every $M\in C(\mu,k)$. Let $\mathbb{H}_k:=\min_{M\in C(\mu,k)}\mathbb{H}(M)$ and let $\dim(\mu):=\liminf_{k\to\infty}\mathbb{H}_k/\ln(k)$.

\medskip

\noindent{\bf Graphons vs. s-graphons}: A graphon (in the most restricted sense) is a symmetric measurable function of the form $W:[0,1]^2\rightarrow [0,1]$. The edge density $t(e,W)$ of $W$ is equal to the integral of $W$ according to the Lebesgue measure $\lambda^2$ on $[0,1]^2$. If $t(e,W)\neq 0$ then the function $W/t(e,W)$ is the Radon-Nikodym derivative of the s-graphon $\mu_W$ defined by $\mu_W(A):=t(e,W)^{-1}\int_A W d\lambda^2$ where $A$ is an arbitrary measurable set in $[0,1]^2$. Let $\{G_i\}_{i=1}^\infty$ be a graph sequence such that its limit in the sense of \cite{LSz} is the graphon $W$ and assume that $W\neq 0$. We have that $\{G_i\}_{i=1}^\infty$ is also s-convergent and its limit is $\mu_W$. In other words, if $\{G_i\}_{i=1}^\infty$ is a convergent dense graph sequence then the limit graphon can be recovered from its limiting s-graphon and the limiting edge density $\lim_{i\to\infty}t(e,G_i)$.

\medskip

\noindent{\bf Sampling and degree distribution:} One of the handicaps of s-convergence is that it does not seem to be naturally connected to any sampling procedure if the limit object has a singular part. A ``sampling type'' information that we can still recover in a greater generality is related to the degree distribution. Let $\mu$ be an s-graphon. If the marginal distribution $\mu'$ of $\mu$ on the first coordinate is uniform on $[0,1]$ then we say that $\mu$ is {\it regular}. It is clear that regularity is isomorphism invariant in our language as it can be completely recovered from the shapes. If $\mu'$ is absolutely continuous with respect to the Lebesgue measure then its value distribution is also an isomorphism invariant and we call it the degree distribution of $\mu$ in this case. Note that highly singular s-graphons can have an absolutely continuous marginal. For example the s-graphons $\nu_\alpha$ defined above are regular.

\noindent{\bf S-convergence of bounded degree graphs:}

It is clear from the definitions that for bounded degree graph sequences s-convergence is weaker than local-global convergence. The relationship to the Benjamini-Schramm limit is more complicated. None of them is weaker than the other. 
If a graph sequence $\{G_i\}_{i=1}^\infty$ is local-global convergent then the limit object (see also \cite{HLSz}) is a bounded degree measurable graph $G$ on the vertex set $[0,1]$ with the following measure preserving property. If $A,B\subseteq [0,1]$ are Borel measurable then $\int_A {\rm deg}_B(x) d\lambda=\int_B {\rm deg}_A(x) d\lambda$ where ${\rm deg}_U(x)$ denotes the number of neighbors of $x$ in $U$ and $\lambda$ is the usual Lebesgue measure on $[0,1]$.
Using this we have that the measure $\mu_G$ that is uniquely defined by $\mu_G(A\times B):=\int_A {\rm deg}_B(x) d\lambda$ is symmetric. It is not hard to see that the s-graphon $\mu_G$ is the limit object of $\{G_i\}_{i=1}^\infty$ in the sense of s-convergence. This consistence between the two limit objects is an encouraging fact. However the other direction does not work. The local-global limit cannot always be reproduced from the limiting s-graphon. It remains an interesting question to understand what s-convergence and the corresponding metric sees from bounded degree graphs.  

\medskip

\noindent{\bf Blow-up invariance:} 

It is useful to note that s-convergence is invariant with respect to the so-called blow-up operation. The $k$ blow-up of a graph $G$ is obtained from $G$ by replacing each vertex by $k$ vertices and each edge by a complete bipartite graph between the corresponding $k$-tuples of vertices. Subgraph densities and thus dense graph convergence is insensitive to this operation. It is easy to see that shapes of graphs are also invariant with respect to blow-up. 

\medskip

\noindent{\bf Relationship to other convergence notions for sparse graphs:} 

There are various limit notions for sparse graph sequences. We have already investigated the relationship between $L_p$ convergence (\cite{BCCZ},\cite{BCCZ2}) and s-convergence. Another related limit concept is the  {\it logarithmic convergence} described in \cite{Sz1}. Logarithmic convergence shares some common properties with s-convergence. For example logarithmic convergence is also blow-up invariant and it detects dense graph limit theory almost completely. Both s-convergence and logarithmic convergence loses a certain constant in the dense case but the meaning of these constants are different. In s-convergence we basically lose edge density: A graphon $W$ is equivalent with $cW$ where $c$ is any positive constant. In logarithmic convergence a graphon $W$ is equivalent with $\otimes^nW$ where $\otimes^nW$ is defined to be the unique graphon with the property that $t(H,\otimes^nW)=t(H,W)^n$ holds for every $H$. Note that logarithmic convergence was introduced to study problems in extremal combinatorics (such as Sidorenko's conjecture) which are invariant with respect to the $\otimes^n$ operation. A third limit concept for sparse graph sequences was introduced in \cite{Fr} by P. E. Frenkel. The main idea in \cite{Fr} is to use different normalizations of the homomorphsim numbers depending on the sparsity of the sequence. A nice fact abut this type of convergence is that it puts Benjamini-Schramm limits \cite{BS} and dense graph limits into a unified language. For other type of limit concepts see also \cite{NO}. At this point there is no ``best'' or ``strongest'' convergence notion for sparse graphs but there is a zoo of limit concepts capturing different properties. 

\noindent{\bf Stronger versions of s-convergence}

The notion of s-convergence can be strengthened and modified is various meaningful ways. One of the most obvious ones is the following. For two graphs $H,G$ we denote by ${\rm Hom}(H,G)$ the set of all graph homomorphisms from $H$ to $G$. Since ${\rm Hom}(H,G)$ is a set of functions from $V(H)$ to $V(G)$ we can consider ${\rm Hom}(H,G)$ as a subset of $V(G)^{V(H)}$. The characteristic function ${\rm Char}(H,G)$ of ${\rm Hom}(H,G)$ is a function of the form $V(G)^{V(H)}\rightarrow\{0,1\}$. In particular if $H$ is the single edge $e$ then ${\rm Char}(e,G)$ is the adjacency matrix of $G$. In general ${\rm Char}(H,G)$ is a $|V(H)|$ dimensional array. We say that $G_n$ is strongly s-convergent if the arrays ${\rm Char}(H,G_n)/|{\rm Hom}(H,G_n)|$ are convergent for every fixed $H$ is a similar sense as Definition \ref{def:matconv}. Following the philosophy of this paper, the limit object should be a collection of measures on the sets $[0,1]^{V(H)}$ where $H$ runs through all finite graphs. It is a non-trivial question to decide which systems of measures arise this way.

\section{Examples}\label{ch:ex}

\noindent{\bf Maximal regular s-graphon:} A symmetric matrix is regular if all the row sums (and thus the column sums) are equal. An s-graphon $\mu$ is regular if every $M\in C(\mu,k)$ is regular for every $k\in\mathbb{N}$.
We show that there is a maximal regular s-graphon $\mu_{\max}$ with respect to the partial order $\preceq$. Let $R_k$ denote the set of all non-negative symmetric $k\times k$ matrices with the property that all row sums are equal to $1/k$. It is clear that if $\mu$ is regular then $C(\mu,k)\subseteq R_k$. It is enough to show that there exists $\mu_{\max}$ with $C(\mu_{\max},k)=R_k$ for every $k$.
Let $C_n$ denote the cycle of length $n$. 

We claim that for every $k\in\mathbb{N}$ the sequence $\{C_0(C_n,k)\}_{n=1}^\infty$ converges to $R_k$ as $n$ goes to $\infty$ in the Hausdorff metric. To see this let $M\in R_k$. We have that $kM$ is a doubly stochastic matrix and thus it represents a symmetric Markov chain on $[k]=\{1,2,\dots,k\}$. Let $(a_1,a_2,\dots,a_n)\in [k]^n$ be a random vector obtained by running the Markov chain $kM$ for $n$ steps. Let $q_{i,j}$ denote the number of indices $t$ such that $(a_t,a_{t+1})=(i,j)$. We have that if $n$ is large then $q_{i,j}/n$ is close to $M_{i,j}$ with probability close to $1$ and the errors go to $0$ with $n$. This means that if we color the vertices of $C_n$ consecutively by $a_1,a_2,\dots,a_n$ then the number of edges between the $i$-th and the $j$-th color class is roughly $2nM_{i,j}$. It follows that the matrix of the partition normalized by $2|E(C_n)|=2n$ is close to $M$. With small changes we can make this partition balanced and thus our claim is proved.

Now let $\mu_{\max}$ be an s-graphon representing the limit of $\{C_n\}_{n=1}^\infty$. We obtained that $C(\mu_{\max},k)=R_k$. The reader can check that a concrete choice for $\mu_{\max}$ is the s-graphon $\nu_{\alpha}$ (where $\alpha$ is irrational) constructed in Chapter \ref{ch:conc}.

\medskip

\noindent{\bf Hypercubes and fat hypercubes:} Let $H_n$ denote the graph on the vertex set $\{0,1\}^n$ in which two vectors are connected if their Hamming distance is $1$. It can be proved that $\{H_n\}_{n=1}^\infty$ is convergent and the limit object is the maximal regular s-graphon $\mu_{\max}$ constructed above. The idea of the proof is the following. Let us chose a natural number $a$ such that $0<<a<< n$. We have that the map $f_n:v\to \langle v,1_n\rangle ~{\rm mod}~ a$ is a graph homomorphism from $H_n$ to $C_a$ where $1_n$ is the all $1$ vector of length $n$ and $v\in\{0,1\}^n$. We also have that $f_n$ is roughly balanced in the sense that preimages have similar size. From here we can use the preimages of the partitions of $C_a$ constructed above to show that $\lim_{n\to\infty}C_0(H_n,k)=R_k$.

For $0\leq\alpha\leq 1$ let $H_n^\alpha$ denote the graph on $\{0,1\}^n$ in which two vectors are connected if their Hamming distance is  in the interval $[\alpha n,\alpha n+3]$. We call them fat hypercubes. Fat hypercubes play an important illustrative role in the limit theory proposed in \cite{Sz1}. They provide natural examples for graph sequences in which the number of edges grows with a fixed power of the number of vertices which is between $1$ and $2$. There is a natural infinite version of $H_n^\alpha$ that we denote by $H^\alpha$. Let us choose a uniform random element $x$ in the cantor set $\mathcal{C}=\{0,1\}^\mathbb{N}$ and let $y$ be obtained from $x$ such that coordinate-wise independently with probability $\alpha$ we change the coordinate. The distribution $H^\alpha$ of the pair $(x,y)$ is symmetric on $\mathcal{C}\times\mathcal{C}$ and thus it is an s-graphon. We ask the following question: {\it Is it true that for fixed $\alpha$ the sequence $\{H_n^\alpha\}_{n=1}^\infty$ converges to $H^\alpha$?}

\medskip

\noindent{\bf Product graphs:} For two graphs $G_1$ and $G_2$ we denote by $G_1\times G_2$ the graph with vertex set $V(G_1)\times V(G_2)$ such that $(v_1,v_2)$ and $(w_1,w_2)$ are connected if and only if $(v_1,w_1)\in E(G_1)$ and $(v_2,w_2)\in E(G_2)$. For a fixed graph $G$ the sequence $\{G^n\}_{n=1}^\infty$ is a natural example for a sparse but not too sparse graph sequence. The limit object of this sequence is basically given by the uniform measure concentrated on the edges of $G^\infty$. Note that $V(G)^\infty$ is a Cantor set and the edge set $E(G^\infty)$ is a subset of $(V(G)^\infty)^2$. The uniform distribution on $E(G^\infty)$ is defined by choosing an infinite sequnce $(v_1,w_1),(v_2,w_2),\dots$ of independent, uniform random directed edges in $G$. 

\medskip

\noindent{\bf Subdivisions of complete graphs:} Let $K_n^\circ$ denote the $2$-subdivision of the complete graph $K_n$. The graphs $K_n^\circ$ are highly non-regular and their limit is non-trivial. We define a probability distribution $\mu^\circ$ on $\mathcal{C}^4=(\mathcal{C}^2)^2$ in the following way. We choose $(a,b)\in\mathcal{C}^2$ uniformly and then with probability $1/4$ each we choose one of $((a,b),(a,a)),((a,b),(b,b)),((a,a),(a,b)),((b,b),(a,b))$. It is clear that $\mu^\circ$ is symmetric with respect to exchanging the first two coordinates and the last two coordinates. Note that replacing $\mathcal{C}^2$ by $\mathcal{C}$ (using a continuous measure preserving bijection) we can also represent $\mu^\circ$ as a symmetric probability measure on $\mathcal{C}^2$. We claim that the limit of the graphs $K_n^\circ$ is $\mu^\circ$.

\subsection*{Acknowledgement.} The research leading to these results has received funding from the European Research Council under the European Union's Seventh Framework Programme (FP7/2007-2013) / ERC grant agreement n$^{\circ}$617747. The research was partially supported by the MTA R\'enyi Institute Lend\"ulet Limits of Structures Research Group.

\end{document}